\definecolor{colordelink}{rgb}{0,0,0.50}
\definecolor{colordecite}{rgb}{0,0.5,0}
\definecolor{colordeurl}{rgb}{0,0.41,0.5}
\def\im{\operatorname{im}}
\def\dim{\operatorname{dim}}
\def\rank{\operatorname{rank}}
\def\rk{\operatorname{rank}}
\def\codim{\operatorname{codim}}
\def\id{\operatorname{id}}
\def\sgn{\operatorname{sgn}}
\newcommand{\num}{\texttt{\#}}
\newcommand{\Alt}{\textnormal{Alt}}
\newcommand{\alt}{\textnormal{Alt}}
\newcommand{\RR}{\mathbb{R}}
\newcommand{\CC}{\mathbb{C}}
\newcommand{\QQ}{\mathbb{Q}}
\newcommand{\ZZ}{\mathbb{Z}}
\newcommand{\R}{\mathbb{R}}
\newcommand{\eqA}{\mathscr{A}}
\newcommand{\Ascr}{\mathscr{A}}
\newcommand{\Kscr}{\mathscr{K}}
\newcommand{\Fbb}{\mathbb{F}}
\newcommand{\Rbb}{\mathbb{R}}
\newcommand{\Vcal}{\mathcal{V}}
\newcommand{\Wcal}{\mathcal{W}}
\newcommand{\ubm}{\bm{u}}
\newcommand{\tpitchfork}{%
  \vbox{
    \baselineskip\z@skip
    \lineskip-.52ex
    \lineskiplimit\maxdimen
    \m@th
    \ialign{##\crcr\hidewidth\smash{$-$}\hidewidth\crcr$\pitchfork$\crcr}
  }%
}
\theoremstyle{plain}
\newtheorem{theorem}{Theorem}[section]
\newtheorem{lemma}[theorem]{Lemma}
\newtheorem{corollary}[theorem]{Corollary}
\newtheorem{proposition}[theorem]{Proposition}
\newtheorem*{theorem**}{Theorem\theoremnum}
\newenvironment{theorem*}[1][]{%
  \edef\theoremnum{\if\relax\detokenize{#1}\relax\else~#1\fi}
  \begin{theorem**}
}{%
  \end{theorem**}
} 
\newtheorem*{conjecture**}{Conjecture\theoremnum}
\newenvironment{conjecture*}[1][]{%
  \edef\theoremnum{\if\relax\detokenize{#1}\relax\else~#1\fi}
  \begin{conjecture**}
}{%
  \end{conjecture**}
}
\theoremstyle{definition}
\newtheorem{definition}[theorem]{Definition}
\newtheorem{proposition/definition}[theorem]{Proposition/Definition}
\newtheorem{conjecture}[theorem]{Conjecture}
\newtheorem{example}[theorem]{Example}
\newtheorem{remark}[theorem]{Remark}
\newtheorem{notation}[theorem]{Notation}
\Crefname{notation}{Notation}{Notations}
\theoremstyle{remark}
\newtheorem*{note}{Note}
\begin{document}
\author{I. Breva Ribes, R. Giménez Conejero}
\title[Good real images of complex maps]{Good real images of complex maps}

\address{Departament de Matemátiques,
Universitat de Val\`encia, Campus de Burjassot, 46100 Burjassot
Spain
\newline
Mid Sweden University, Sidsjövägen 2, 852 33 Sundsvall,
Sweden
}
\email{ignacio.breva@uv.es}
\email{Roberto.Gimenez@uv.es}

\subjclass[2020]{Primary 58C27; Secondary 14P25,32S30} \keywords{Deformations of map germs, homology, good real perturbation, complexification, M-deformation.}

\begin{abstract}
    We prove several results regarding the homology and homotopy type of images of real maps and their complexification. In particular, we study the local behavior of singular points after deformations. In this context, we prove a restrictive necessary condition for a real perturbation to have the same homology than its complexification, which is known as good real perturbation. We prove the conjecture of Marar and Mond stating that for singularities from $\mathbb{C}^n$ to $\mathbb{C}^{n+1}$, a good real perturbation is homotopy equivalent to its complexification, and show a generalization in other dimensions. Applications to $M$-deformations and other concepts as well as examples are given.
\end{abstract}

\maketitle

\section{Introduction}

We study the relation between the image of a real analytic map and that of its complexification. More precisely, we study this relation for singular unstable maps and their deformations.  Our work is local in nature, so we work with germs and their perturbations, for which a general recommended modern reference is \cite{Mond2020}.

Broadly speaking, we say that a complex map germ has a \textsl{good real perturbation} if the changes in homology by a perturbation can be observed in the real image (this is made precise in \cref{def:grp}). This is similar to the concept of \textsl{good complexifications} (sometimes also \textsl{minimal complexifications}) studied initially by Whitney and Bruhat \cite{Whitney1959} and later by Kulkarni \cite{Kulkarni1978} and Totaro \cite{Totaro2003} among others (cf. \cite{Biswas2016,Biswas2015}). A closed manifold $M$ has a \textsl{good complexification} if it is homotopy equivalent to its \textsl{complexification}\footnote{More precisely, a \textit{good complexification} of a closed manifold $M$ is a smooth affine algebraic variety $V^\RR$ such that $M$ is diffeomorphic to $V^\RR$ (see \cite{Tognoli1973}) and the inclusion of $V^\RR$ into its complexification $V^\CC$ is a homotopy equivalence.}.

Indeed, the similarities go further. In Mond's seminal article on this topic \cite{Mond1996} it was asked, and later conjectured in \cite{Cooper1998} by Cooper and Mond, the following:

\begin{conjecture*}[\ref*{conj:grp}]
If $f:(\CC^n,0)\to(\CC^{n+1},0)$ admits a good real perturbation $f_s^\RR$, then the image of $f_s^\RR$ and of its complexification $f_s$ are homotopy equivalent.
\end{conjecture*}

One of our main results is the proof of this conjecture in \cref{sec:homotopy}. We also prove a generalization in other dimensions, but only in \textsl{corank one} (i.e., when the rank of the differential is the maximum minus one).

In general, the homology of the image a finite map $F$ is given by the multiple point spaces $D^k(F)$, by a spectral sequence that we introduce in \cref{sec: ICSS}. If the map germ $f$ as above has corank one, it turns out that $D^k(f_s)$ has the homotopy type of a wedge of $\mu\big(D^k(f)\big)$ spheres of dimension $\dim D^k(f)$. Our other main theorem is the following.

\begin{theorem*}[\ref*{main}]
If $f$ has corank one and admits a good real perturbation, then $\mu\big(D^k(f)\big)$ is either $0$ or $1$ provided $\dim D^k(f)>0$.
\end{theorem*}

In practice, the methods we use also show whether a particular germ has a good real perturbation or not. Moreover, from all the references that we mention in the following paragraphs, only corank one germs have good real perturbations. Mond and Wik Atique showed that the simplest example of $\Ascr_e$-codimension one germ $(\CC^n,0)\to(\CC^{n+1},0)$ that has corank two (which only exist if $n\geq 5$) does not have a good real perturbation \cite{Mond2003}. No corank two germ is known to have good real perturbations, so it is reasonable to think that our result is actually a characterization of admitting good real perturbations.
\newline

The topic of good real perturbations was initiated by an observation of Goryunov in \cite{Goryunov1991} that motivated the work of Mond \cite{Mond1996}, which we use here. For germs $(\CC^2,0)\to(\CC^3,0)$ a complete classification of germs with good real perturbations was given by Marar and Mond in \cite{Marar1996} (with some interesting visualizations). In \cite{Houston2005}, Houston showed a proof of the conjecture in these dimensions that uses a lemma of \cite{Houston1997} that is false, although the mistake is probably fixable in his argument, we discuss this in \cref{sec:houston}. In that work, Houston gives partial results on the conjecture for germs $(\CC^{2n},0)\to(\CC^{3n},0)$ and gives a family of examples with good real perturbations that also satisfy the conjecture (a generalization for all $n$ of the family $H_k$ for $n=1$). It is also known that all corank one $\Ascr_e$-codimension one germs $(\CC^n,0)\to(\CC^p,0)$ have good real perturbations; for $n+1\geq p$ by Cooper, Mond and Wik Atique in \cite{Cooper2002} and by Houston in \cite{Houston2005} (cf. \cite{Houston2002a,Cooper1993,Houston2005b}). Finally, Cooper and Mond showed some relation of the monodromy of certain good real pictures and their complexification in \cite{Cooper1998}. Independently, McCrory and Parusi\'nski proved stronger results with the same flavour in \cite{McCrory1997}.

However, before these articles, Guse\u{i}n-Zade \cite{GuseinZade1974} and also A'Campo \cite{ACampo1975} proved the celebrated fact that there always exist a real (stable) perturbation of a plane curve with $\delta$ real nodes, which shows that curve singularities always have a good real perturbation (cf. \cref{fig:curvasE6} below). This was generalized to germs $(\CC^n,0)\to(\CC^{2n},0)$ that are $\Ascr$\textsl{-simple} by Klotz, Pop and Rieger in \cite{Klotz2007}. Actually, these two works are focused on \textsl{$M$-deformations} rather than on good real perturbations, but it is easy to see (in particular, from our work here) that these two concepts are equivalent in those dimensions. An \textit{$M$-perturbation} is the analogous concept of an $M$-morsification of a function germ: a real stable perturbation that has all the 0-dimensional singularities of the complex perturbation ($M$ as in \textsl{maximal}). $M$-perturbations also exist for all $\Ascr$\textsl{-simple} germs $f:(\CC^n,0)\to(\CC^p,0)$ of corank one such that $n\geq p$ by Rieger and Ruas \cite{Rieger2005} and $p=n+1$ (but $n\neq4$) by Rieger, Ruas and Wik Atique \cite{Rieger2008a} (cf. \cite{Rieger2008}). We show in \cref{sec:otherdeformations} that any good real perturbation in corank one is also an $M$-perturbation, for $n<p$.


%
%



\subsection{Structure of the article}

In \cref{sec:preliminaries} basic notation is established about alternating homology, multiple point spaces and some previously known results about good real perturbations.
The Image-Computing Spectral Sequence (ICSS), which is used throughout the paper, and some key results are also presented 
 here.

\cref{sec:whiskers} is dedicated to studying whiskers of map germs, i.e., real points in the target which are the image of non-real points in the source.

The core of the paper lies in \cref{sec: Pruebas}
, where the central result \cref{main} is obtained.

The general proof of \cref{conj:grp} is given in \cref{sec:homotopy}, along with the above mentioned counterexample to Houston's lemma.
Moreover, we also give a proof for the case of $p>n+1$ when the map germ has corank 1.

The last sections are dedicated to several applications of the central results: excellent real perturbations, $M$-deformations and a classification from $\CC^3$ to $\CC^4$ of good real perturbations.
\newline

\textbf{Acknowledgements:} The authors are thankful to Marco Marengon and Diego González-Sánchez for their helpful conversations regarding technical lemmas of this work and to Raúl Oset Sinha for the encouragement and support. 
Both authors were partially supported by Grant PID2021-124577NB-I00 funded by MCIN/AEI/ 10.13039/501100011033 and by “ERDF A way of making Europe”. The first-named author was partially supported by grant UV-INV-PREDOC22-2187086, funded by Universitat de València.
The second-named author was partially supported by NKFIH Grant “Élvonal (Frontier)” KKP 144148. 

\section{Preliminaries}\label{sec:preliminaries}

\subsection{The case of ICIS}\label{sec:icis}

It is natural to ask if we could have a \textit{good real picture} of a hypersurface singularity or, more generally, an isolated complete intersection singularity (ICIS). More precisely, for a complex ICIS $(X,0)\subset(\CC^N,0)$ of dimension $d$, under what circumstances there is an isomorphic ICIS $(X^\CC,0)$ with real equations such that its Milnor fiber $F^\CC$ and $F^\RR\coloneqq F^\CC\cap\RR^N$ (i.e., the \textsl{real Milnor fiber}) have the same homology in dimension $d$. This was answered by Mond. We include its proof since it is what motivated this work.

\begin{proposition}[cf. {\cite[Remark 2-4]{Mond1996}}]\label{prop:iciscase}
Any ICIS $(X^\CC,0)$ of dimension $d>0$ that has a good real picture as described above has Milnor number $0$ or $1$.
\end{proposition}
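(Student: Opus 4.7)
The plan is to combine Smith–Floyd theory for the conjugation involution on $F^\CC$ with the elementary fact that a connected real $d$-manifold has top-degree mod $2$ Betti number at most $1$.

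First I would record the shape of the complex Milnor fiber. By Hamm's theorem, the Milnor fiber of an ICIS of dimension $d>0$ is homotopy equivalent to a bouquet of $\mu$ copies of $S^d$, so that
$$\sum_i \beta_i(F^\CC;\ZZ/2) \;=\; 1+\mu.$$
Since the defining equations are real, one may choose a real regular value $\delta\in\RR$ and a suitable representative $F^\CC=f^{-1}(\delta)\cap \bar B_\epsilon$ of the Milnor fibration that is preserved by complex conjugation $\tau\colon\CC^N\to\CC^N$. Its fixed set is exactly $F^\RR=F^\CC\cap\RR^N$, which is a smooth real $d$-manifold (with boundary) by the implicit function theorem.

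Next I would apply the Smith–Floyd inequality to the smooth involution $\tau$ on $F^\CC$, obtaining
$$\sum_i \beta_i(F^\RR;\ZZ/2) \;\leq\; \sum_i \beta_i(F^\CC;\ZZ/2) \;=\; 1+\mu.$$
The hypothesis that $(X^\CC,0)$ has a good real picture gives $H_d(F^\RR;\ZZ)\cong H_d(F^\CC;\ZZ)=\ZZ^\mu$, and the universal coefficient theorem then yields $\beta_d(F^\RR;\ZZ/2)\geq \mu$. Feeding this into the Smith–Floyd bound forces
$$\beta_0(F^\RR;\ZZ/2) \;\leq\; 1,$$
so $F^\RR$ is either empty (in which case $\mu=0$ and the claim holds) or connected.

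Finally, I would invoke the fact that a connected smooth $d$-manifold $M$ satisfies $\beta_d(M;\ZZ/2)\leq 1$, with equality iff $M$ is closed. Combined with the lower bound already proved, this gives
$$\mu \;\leq\; \beta_d(F^\RR;\ZZ/2) \;\leq\; 1,$$
as desired. The step that requires the most care is setting up Smith–Floyd on the pair $(F^\CC,F^\RR)$: one must pick a $\tau$-equivariant model of the Milnor fibration so that the classical form of the inequality applies. Once that model is fixed the Betti-number arithmetic is immediate.
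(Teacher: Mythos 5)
Your proof is correct and follows essentially the same route as the paper: the Smith--Floyd inequality for conjugation on $F^\CC$ plus the good-real-picture hypothesis, combined with the relation between $\beta_0$ and $\beta_d$ of the real Milnor fiber. The paper packages that last step as Poincar\'e--Lefschetz duality ($\beta_0(F^\RR)\geq\beta_d(F^\RR)$, hence $1+\mu\geq 2\mu$), whereas you deduce $\beta_0(F^\RR)\leq 1$ first and then use that a connected $d$-manifold has $\beta_d\leq 1$ --- the same content in a different order.
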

\begin{proof}
We use the notation above. By Smith Theory (see \cref{{thm:ST_ineq}} below) and  Poincaré-Lefschetz duality, if $X^\CC$ is singular,
\[\begin{aligned}
1+\mu(X^\CC)=\sum_i \beta_i(F^\CC;\Fbb_2)\geq &\beta_i(F^\RR;\Fbb_2) \\
\geq&\beta_0(F^\RR)+\beta_d(F^\RR)\\
\geq&2\beta_d(F^\RR).
\end{aligned}\]
If the ICIS has a good real picture then $\mu(X^\CC)=\beta_d(F^\RR)$, which implies $\mu(X^\CC)=1$.
\end{proof}

It is possible to complete this to a homotopy result. The idea to use Morse theory in the way we do it in the proof of the theorem below was taken from \cite[Lemma 2-2]{Mond1996}. In fact, that proof has a small missing step. Following the notation there, the singularities of the equation $g_t$ may not be Morse singularities, but they are after taking a convenient Morsification and gluing the pieces of the fibers around each singular point of $g_t$. We avoid that step entirely since we already have a unique Morse singularity, as we show now.

\begin{lemma}\label{lem:milnorone}
Any ICIS $(X,0)$ of dimension $d>0$ with Milnor number one is isomorphic to an $A_1$ hypersurface singularity, in the sense that the local algebras are isomorphic.
\end{lemma}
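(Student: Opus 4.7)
I follow the Morse-theoretic strategy of \cite[Lemma 2-2]{Mond1996}, adapted to exploit the specificity of $\mu = 1$. The goal is to reduce $(X, 0)$ to a hypersurface singularity to which the classical complex Morse lemma applies.

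\textbf{Step 1 (Corank of $F$ is exactly one).} Write $F = (f_1, \dots, f_k) : (\CC^N, 0) \to (\CC^k, 0)$ for the defining map, with $N = d + k$. Since $X$ is singular, $\rank dF(0) < k$. The key claim is that $\rank dF(0) = k - 1$. For if $\rank dF(0) \leq k - 2$, then after a linear change on $(\CC^k, 0)$ two of the components, $f_{k-1}$ and $f_k$, lie in $\mfrak^2$; intersecting with the smooth germ $V(f_1, \dots, f_{k-2})$ produces a corank-$2$ ICIS of dimension $d$ cut out by two equations of order $\geq 2$. A direct count of vanishing cycles in the associated Milnor fibration (in the spirit of \cite[Lemma 2-2]{Mond1996}, or equivalently via an application of the L\^e-Greuel formula) shows $\mu \geq 2$, contradicting the hypothesis.

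\textbf{Step 2 (Rank theorem and Morse lemma).} With $\rank dF(0) = k - 1$, the holomorphic rank theorem yields local coordinates $(y_1, \dots, y_N)$ on $(\CC^N, 0)$ and a linear change on $(\CC^k, 0)$ with $f_i = y_i$ for $i < k$ and $f_k = g(y_k, \dots, y_N)$ for some $g \in \mfrak^2$. This identifies $(X, 0)$ analytically with the hypersurface germ $(V(g), 0) \subset (\CC^{d+1}, 0)$ (local algebras and Milnor numbers match), so $\mu(g) = 1$. By the classical complex Morse lemma, there are coordinates $z_1, \dots, z_{d+1}$ with $g(z) = z_1^2 + \dots + z_{d+1}^2$, and consequently
\[ \Ocal_{X, 0} \cong \CC\{z_1, \dots, z_{d+1}\}/(z_1^2 + \dots + z_{d+1}^2), \]
which is the local algebra of the $A_1$ hypersurface of dimension $d$.

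\textbf{Main obstacle.} The technical heart is Step 1: ruling out corank $\geq 2$. For hypersurfaces the implication $\mu = 1 \Rightarrow A_1$ is immediate from $\mu = \dim_\CC \Ocal / J(f)$, but for ICIS $\mu$ admits no such direct algebraic description, and the argument requires Morse-theoretic bookkeeping of vanishing cycles. The hypothesis $\mu = 1$ is decisive: it forces the unique vanishing cycle to come from a single Morse critical point, so the Morsification-and-regluing step needed in Mond's original argument is avoided, exactly as advertised in the preceding paragraph.
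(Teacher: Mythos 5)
Your Step 2 is fine, but Step 1 — ruling out $\rank dF(0)\leq k-2$ — is precisely the entire content of the lemma, and the justification you offer for it does not go through as stated. Concretely: suppose $(X,0)=V(g_1,g_2)\subset(\CC^{d+2},0)$ with $g_1,g_2\in\mfrak^2$ and $V(g_1)$ a hypersurface with isolated singularity. The L\^e--Greuel formula gives
\[
\mu(X,0)+\mu\big(V(g_1),0\big)=\dim_\CC \Ocal_{\CC^{d+2},0}\big/\big((g_1)+I_2\big),
\]
where $I_2$ is generated by the $2\times2$ minors of the Jacobian of $(g_1,g_2)$. Since all entries of that Jacobian lie in $\mfrak$, the right-hand side is at least $\dim_\CC\Ocal/\mfrak^2=d+3$, so all you get is $\mu(X,0)\geq d+3-\mu(V(g_1),0)$ — useless without an upper bound on $\mu(V(g_1),0)$, which you do not have. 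The ``direct count of vanishing cycles'' is the same computation in geometric clothing: the Milnor fibre of $V(g_1)$ is obtained from that of $X$ by attaching $\mu(X)+\mu(V(g_1))$ cells, and nothing forces at least two of them to ``belong to'' $X$. So the claim ``corank $\geq 2$ implies $\mu\geq 2$'' is true but not established by your sketch; as written, the proof is circular in the sense that the only easy way to see the claim is to already know the conclusion of the lemma.

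For comparison, the paper avoids this issue entirely by a purely classification-theoretic route: by Looijenga--Steenbrink \cite{Looijenga1985}, $\mu\geq\tau$ for an ICIS of positive dimension, so $\mu=1$ forces $\tau=1$; a $\Kscr_e$-codimension-one germ has only the smooth germ adjacent to it, hence is simple; and Giusti's list \cite{Giusti1983} of simple ICIS then leaves only $A_1$. If you want to keep your Morse-theoretic strategy, you must supply an actual proof of Step 1 — for instance via the inequality $\mu\geq\tau$ again (an ICIS of embedding dimension $\geq d+2$ has $\tau\geq 2$, since the two quadratic-or-higher equations already contribute independent first-order deformations), at which point you are essentially back to the paper's argument.
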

\begin{proof}
ICIS with Milnor number one and positive dimension also have Tjurina number one, by \cite[Theorem]{Looijenga1985}. This shows that they are simple, because any $\Kscr_e$-codimension one singularity (i.e., with Tjurina number one) has finitely-many adjacent singularities ($\Kscr$-equivalence classes of singularities in any possible perturbation), in this case only the smooth one. By the classification of Giusti of simple isolated singularities of complete intersections \cite[Théorème]{Giusti1983}, the ICIS can only be isomorphic to the $A_1$ hypersurface singularity.
\end{proof}

\begin{theorem}\label{thm:iciscase}
If an ICIS $(X^\CC,0)$ has a good real picture, then $F^\RR$ is a deformation retract of $F^\CC$.
\end{theorem}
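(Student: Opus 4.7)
The plan is to cascade the preceding results down to a single explicit model and then produce the retraction by hand. By \cref{prop:iciscase} the Milnor number of $(X^\CC,0)$ is at most $1$. If $\mu(X^\CC)=0$ then $(X^\CC,0)$ is smooth; the antiholomorphic involution defining the real structure linearizes near the fixed point $0$, so $F^\CC$ is a complex disk in a standard $(\CC^d,0)$, $F^\RR$ is a real-analytic contractible slice of it, and a straight-line retraction in linearizing coordinates does the job. Thus the only substantive case is $\mu(X^\CC)=1$.

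In that case, \cref{lem:milnorone} identifies $(X^\CC,0)$ analytically with the ordinary node $A_1$, so in particular $X^\CC$ is a hypersurface (its minimal embedding dimension is $d+1$), defined by a single real-analytic function $f\colon(\RR^{d+1},0)\to(\RR,0)$ whose complexification has a Morse critical point at $0$. Consequently $f$ itself has a non-degenerate Hessian at the origin, so by Sylvester's law of inertia and the real Morse lemma there exist real-analytic coordinates in which
\[f(x)=x_1^2+\cdots+x_p^2-x_{p+1}^2-\cdots-x_{d+1}^2\]
for some signature $(p,q)$ with $p+q=d+1$. The good real picture hypothesis forces $(p,q)$ to be definite: for any mixed signature the real Milnor fiber $\{f=\epsilon\}\cap B_\delta^\RR$ deformation retracts to a product of lower-dimensional spheres with disks, giving $\beta_d(F^\RR)=0$, which is incompatible with $\beta_d(F^\RR)=\beta_d(F^\CC)=\mu=1$. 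So, after possibly changing the sign of $f$, we may assume $f=\sum_{j=1}^{d+1}x_j^2$.

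With this normal form, $F^\CC=B_\delta\cap\{z\in\CC^{d+1}:\sum z_j^2=\epsilon\}$ for $0<\epsilon\ll\delta$ is the standard Milnor fiber of $A_1$, and $F^\RR=B_\delta^\RR\cap\{\sum x_j^2=\epsilon\}\cong S^d$ is the equatorial sphere. Writing $z=x+iy$ with $x,y\in\RR^{d+1}$, the equation $\sum z_j^2=\epsilon$ decomposes into $\lVert x\rVert^2-\lVert y\rVert^2=\epsilon$ and $x\cdot y=0$, which exhibits the map
\[z\longmapsto\bigl(x/\lVert x\rVert,\,y\bigr)\]
as a diffeomorphism of (the germ of) $F^\CC$ onto (the disk bundle of) $TS^d$, carrying $F^\RR$ onto the zero section. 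The standard linear deformation retract $(u,v)\mapsto(u,tv)$ of $TS^d$ onto $S^d$ then pulls back to a deformation retract of $F^\CC$ onto $F^\RR$; equivalently, this is the negative gradient flow of the Morse–Bott function $h(z)=\lVert\Im z\rVert^2$ on $F^\CC$, whose critical locus one checks via Lagrange multipliers to be exactly $F^\RR$.

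The main obstacle is the translation from complex to real normal form in the second paragraph: \cref{lem:milnorone} only gives an isomorphism of complex local algebras, so one has to argue separately that the real structure can be put into the standard definite $A_1$ shape. This is handled by combining the real Morse lemma with the quantitative consequence of the good real picture hypothesis, which rules out every indefinite signature a priori.
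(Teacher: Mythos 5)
Your proof is correct, and after the common reduction (via \cref{prop:iciscase} and \cref{lem:milnorone}) to a hypersurface germ with nondegenerate Hessian it takes a genuinely different route from the paper's. The paper stays abstract: it asserts that $F^\CC$ and $F^\RR$ are both homotopy $d$-spheres, attaches the same $(d+1)$-cell to each to obtain contractible spaces, deduces $H^*(F^\CC,F^\RR)=0$ by excision, and finishes with Whitehead's theorem plus a CW-pair argument to upgrade the homotopy equivalence to a deformation retract. You instead normalize the real equation completely --- real Morse lemma together with the observation that the good-real-picture hypothesis forces the quadratic form to be definite --- and then identify $(F^\CC,F^\RR)$ with $(TS^d,S^d)$, so the retraction is just fiberwise scaling. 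Your version buys two things: it makes explicit the signature analysis on which the paper's unargued claim that $F^\RR\simeq S^d$ silently depends (for an indefinite form the real fiber is $S^{p-1}\times D^{q}$ and that claim fails), and it produces the retraction by hand, bypassing Whitehead and the cell-attachment bookkeeping. The small debts you incur are routine but worth stating: you need the \emph{real-analytic} Morse lemma so that the normalization complexifies to put $f^\CC$ in the form $\sum z_j^2$ simultaneously, and the reduction from $N-d$ real equations to one should be justified by noting that the real and complex Jacobians at $0$ coincide as matrices, so the real-analytic implicit function theorem eliminates $N-d-1$ equations compatibly with complexification. The zero-dimensional case, dismissed as clear in the paper, is likewise omitted in your write-up but is immediate since a good real picture there forces $F^\RR=F^\CC$.
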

\begin{proof}
We prove that there is a homotopy equivalence between $F^\RR$ and $F^\CC$ , which is enough by, for example, \cite[Corollary 0.20]{Hatcher2002} (see also \cite[Proposition 0.16]{Hatcher2002}).
\newline

The zero-dimensional case is clear, so we assume that the dimension of $X^\CC$ is positive. After \cref{lem:milnorone}, both $F^\RR$ and $F^\CC$ have codimension one within some smooth hypersurface in their respective ambient spaces $\RR^N$ and $\CC^N$. Therefore, we can assume that they are already hypersurfaces in $\RR^N$ and $\CC^N$ and we can give a Morse-theoretical argument with their unique equation. We also know from \cref{lem:milnorone} that $(X^\CC,0)$ is a Morse singularity, so both $F^\CC$ and $F^\RR$ must be homotopy equivalent to a sphere of dimension $d=N-1$. 

Up to homotopy, one obtains a topological ball in $\CC^N$ by attaching to $F^\CC$ an $N$-cell corresponding to the singularity $(X,0)$. We have a similar argument for the real case and $F^\RR$, so the $N$-cell that we glue in the complex case may be taken to be the same as in the real case. The argument follows the same steps as \cite[Lemma 2-2]{Mond1996}. Let us denote the cell as $E$. Since
\[H^*(F^\CC\cup E, F^\RR\cup E)\cong 0,\]
by excision we also have that
\[H^*(F^\CC, F^\RR)\cong 0.\]
This shows that the inclusion is an isomorphism in all homology groups, which must therefore be a homotopy equivalence. Indeed, if $d>1$, $F^\CC$ and $F^\RR$ are simply connected, so the homotopy equivalence is given by Whitehead's theorem (e.g., Theorem 4.5 in \cite{Hatcher2002}).
 If $d=1$, the argument is trivial.
\end{proof}

\subsection{Multiple points and the Image-Computing Spectral Sequence}\label{sec: ICSS}

One can study images of finite maps by using the ICSS (Image-Computing Spectral Sequence), which is a spectral sequence that has as \textsl{input} the \textsl{alternating homology} of the \textsl{multiple point spaces} of the map and converges to the homology of the image of the map. Let us start by giving the ICSS we use. One of the most general versions is given in \cite[Theorem 5.4]{Houston2007}, cf. \cite{Goryunov1993,Goryunov1995,CisnerosMolina2022,Mond2020}. For us, it suffices the following version.

\begin{theorem}\label{thm: general icss}
Let $f:X\to Y$ be a finite and surjective analytic map between compact subanalitic spaces. Then, there exists a spectral sequence
$$ E^1_{p,q}(f;G)\coloneqq AH_q\big(D^{p+1}(f);G\big)\Longrightarrow H_*\big(f(X);G\big), $$ 
where $G$ is a coefficient group and the differential $d_1$ is induced by the projections $\pi:D^k(f)\to D^{k-1}(f)$ for any $k$.
\end{theorem}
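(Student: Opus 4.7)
The plan is to adapt the classical Goryunov--Mond construction, following the blueprint of Houston's generalisation in the cited reference, by first building a semi-simplicial resolution of $f(X)$ via iterated fibered products and then extracting the alternating part using the symmetric group action.

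First, I would form the augmented semi-simplicial space $X_\bullet$ with $X_k=X\times_Y\cdots\times_Y X$ ($k+1$ factors), face maps given by coordinate omissions, with $\Sigma_{k+1}$ acting by permutation, and augmented to $f(X)$ by $f$ itself. Next, I would establish the descent statement that the geometric realisation $|X_\bullet|$ is weakly equivalent to $f(X)$. Because $f$ is finite, surjective, and both spaces are compact, the fiber of the induced map $|X_\bullet|\to f(X)$ over any $y\in f(X)$ is the full simplex on the finite set $f^{-1}(y)$, which is contractible. Triangulability of subanalytic maps (so that $f$ is locally trivial over a suitable stratification of $f(X)$) upgrades this pointwise statement to a global weak equivalence.

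The skeletal filtration of $|X_\bullet|$ then yields a standard spectral sequence
\[
\widetilde{E}^1_{p,q}=H_q(X_p;G)\Longrightarrow H_{p+q}\bigl(f(X);G\bigr),
\]
with $d_1$ the alternating sum of face-map-induced maps, i.e.\ induced by the projections $\pi\colon D^k(f)\to D^{k-1}(f)$. To land in $AH_q\bigl(D^{p+1}(f);G\bigr)$ rather than the full $H_q(X_p;G)$, I would decompose the spectral sequence by isotypic components of the $\Sigma_{p+1}$-actions, which commute with the augmentation and interact with the face maps by a sign. Identifying the alternating part of $H_q(X_p;G)$ with $AH_q\bigl(D^{p+1}(f);G\bigr)$ then completes the argument, provided one shows that the non-alternating isotypic components do not contribute to the abutment.

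The hard part will be this last point: proving that the non-alternating components vanish in the limit. Over a field of characteristic zero this is a straightforward averaging argument, but for general coefficients $G$ (notably $\ZZ$ and $\Fbb_2$, which are the ones relevant for the applications in the paper) it is the technical heart of the proof and requires an explicit contraction of the non-alternating subcomplex, as carried out by Houston. A secondary subtlety is the precise relation between the fibered product $X\times_Y\cdots\times_Y X$ and the multiple point space $D^{p+1}(f)$: depending on conventions these either coincide or differ by a diagonal stratum on which $\Sigma_{p+1}$ acts non-freely, and one must verify that the passage to alternating homology is insensitive to this difference.
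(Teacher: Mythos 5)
First, a point of reference: the paper does not prove \cref{thm: general icss} at all --- it is quoted from \cite[Theorem 5.4]{Houston2007} (cf.\ \cite{Goryunov1995}), so your attempt is really being measured against the proof in that reference. Your scaffolding matches it in outline: the $(p+1)$-fold fibered products with the $\Sigma_{p+1}$-action, contractibility of the fibers of the resolution over each $y\in f(X)$ because $f$ is finite, triangulability of subanalytic maps to globalize this, and a skeletal filtration.

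The genuine gap is the passage from $H_q(X_p;G)$ to $AH_q\big(D^{p+1}(f);G\big)$, and the mechanism you propose for it --- decompose the skeletal spectral sequence into $\Sigma_{p+1}$-isotypic components and show the non-alternating ones do not survive --- does not work. Each face map $d_i\colon X_p\to X_{p-1}$ is equivariant only for the stabilizer of $i$ in $\Sigma_{p+1}$, so $d_1=\sum_i(-1)^i(d_i)_*$ preserves no isotypic splitting, and over $\ZZ$ and $\Fbb_p$ (the coefficients this paper actually needs) there is no isotypic decomposition of $C_*(X_p)$ in the first place: $C_*^{\Alt}$ is a subcomplex, not a canonical summand with a complement one could ``contract''. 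The known proofs never go through the full spectral sequence and then discard pieces. Instead, one triangulates $f$ and proves directly that the augmented rows $\cdots\to C_n^{\Alt}\big(D^2(f)\big)\to C_n(X)\to C_n\big(f(X)\big)\to 0$ of the \emph{alternating} double complex are exact, via a finite combinatorial computation with alternating tuples of simplices over a fixed simplex of $f(X)$; equivalently, Goryunov's resolution $\coprod_k D^k(f)\times\Delta^{k-1}/\!\sim$ is filtered so that the successive quotients compute $AH_*$ on the nose, using that the $\Sigma_k$-action is simplicially good. Your ``secondary subtlety'' also conceals a real issue: with the paper's convention $C_*^{\Alt}(\,\cdot\,;G)=C_*^{\Alt}(\,\cdot\,;\ZZ)\otimes G$, diagonal simplices are excluded by the sign condition over $\ZZ$, whereas over $\Fbb_2$ the condition $\sigma c=\sgn(\sigma)c$ is vacuous and would retain them; this is precisely why the argument must be run on integral alternating chains rather than on isotypic pieces of homology, and it is the step your proposal leaves unproved.
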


For a general treatment of the multiple point spaces one can see \cite{Houston2007,Houston1999} and \cite[Chapter 2]{Robertothesis}. The following definition is enough for us.

\begin{definition} \label{def: mult spaces f}
The \emph{$k$th-multiple point space} of a finite map or germ $f$, denoted as $D^k(f)$, is defined as follows ($\Fbb=\CC$ or $\RR$):
\begin{itemize}
\item
 If $f\colon X\rightarrow Y$ is a locally stable map between analytic manifolds, then
\[D^k(f)\coloneqq\overline{\big\{    \big(x^{(1)},\dots,x^{(k)}\big)\in X^k   :   f\left(x^{(i)}\right)=f\left(x^{(j)}\right), x^{(i)}\neq x^{(j)}  \big\}}.\]

\item If $f\colon (\Fbb^n,S)\rightarrow(\Fbb^p,0)$ is a stable germ, $D^k(f)$ is the analogous germ in $\big((\Fbb^n)^k,S^k\big)$.

\item
If $f\colon(\Fbb^n,S)\rightarrow(\Fbb^p,0)$ is finite it has a stable unfolding $F(x,\ubm)=\big(f_{\ubm}(x),\ubm\big)$, see \cite[Proposition 7.2]{Mond2020}. Then, $D^k(f)$ is the analytic space germ in $\big((\Fbb^n)^k,S^k\big)$ given by 
\[ D^k(f)\coloneqq D^k(F)\cap\left\{\ubm=0\right\}.\]
\end{itemize}
\end{definition}

There is an obvious action of the group of permutations $\Sigma_k$ in $D^k(f)$ by permuting copies of $X^k$. Hence, we can consider the induced action of $\Sigma_k$ at the (simplicial) chain level and take the alternating isotype of the chain complex of $D^k(f)$, 
\[\begin{aligned}
C^\Alt_*\big(D^k(f);G\big)=& C^\Alt_*\big(D^k(f);\ZZ\big)\otimes G, \text{where}\\
C^\Alt_n\big(D^k(f);\ZZ\big)=& \left\{c\in C_n\big(D^k(f)\big):\sigma^*(c)=\sgn(\sigma)c\ \forall\sigma\in\Sigma_k\right\}.
\end{aligned}\]
It is easy to see that this is a chain complex with the restriction of the boundary. Then, $AH_*$ is the homology of such chain complex. 
There is another \textsl{alternating} object one can consider, one can take the induced action in homology and then take the alternating isotype:
\[ H^\Alt_n \big(D^k(f); \ZZ\big)=\left\{\omega\in H_n\big(D^k(f)\big) : \sigma ^* \omega = \sgn(\sigma) \omega\right\}.\]

\begin{remark}\label{rem:HaltToAH}
These two objects are not isomorphic in general, see \cite[Example 5.1]{Houston1999}. However, when we take $AH$, or $H^\Alt$, on the fiber of a $\Sigma_k$-invariant ICIS they are isomorphic by \cite[Theorem 2.1.2]{Goryunov1995}. This is our situation for $D^k(f)$, as we mention in \cref{lem: dkgamma} below. Furthermore, they are also isomorphic when the coefficient group is a field by \cite[Proposition 10.1]{Mond2020}. We use these two cases of isomorphisms and compute $H^\Alt_*$ instead of $AH_*$, since it is easier.
\end{remark}

\subsection{Complex deformations}\label{sec:complexdef}

Let us go back to the $D^k(f)$ spaces. For $\Ascr$-finite monogerms $f:(\CC^n,0)\to(\CC^p,0)$, $n<p$, the celebrated theorem of Houston \cite[Theorem 4.6]{Houston1997} shows that the stable perturbation $f_s$ is such that $AH_i\big(D^k(f)\big)=0$ if $i\neq \dim_\CC D^k(f_s)$ and free abelian for $i = \dim_\CC D^k(f_s)$. However, in general, $H_*\big(D^k(f_s)\big)$ is non-trivial in many dimensions (see \cite{Mond2016}). This situation is radically simpler if $f$ has \textit{corank one} (i.e., $\rk df_0=n-1$) as \cref{lem: dkgamma} below, known as the \textit{Marar-Mond criterion}, shows.

\begin{definition}\label{def:dksigma}
For any element $\sigma\in\Sigma_k$, we define $D^k(f)^\sigma$ as (the isomorphism type of) the subspace of $D^k(f)$ given by the fixed points of $\sigma$. 
\end{definition}

The relevance of these spaces is explained in \cref{sec: Pruebas} below.

A simple point-wise computation shows that $D^k(f)^{\tau\sigma\tau^{-1}}=\tau D^k(f)^\sigma$. Since conjugacy classes of $\Sigma_k$ are identified with partitions of $k$, so are the isomorphism type of the spaces $D^k(f)^\sigma$. E.g., $\sigma=(1\ 2\ 6\ 3)(4\ 5)(8\ 9)(7)(10)$ corresponds to the partition $\gamma(k)=(r_1,\dots, r_m)=(4,2,2,1,1)$. It will be useful to give the notation $\sigma^\num$ to the \textit{number of cycles} in a given permutation $\sigma$. In the previous example, $\sigma^{\num}=5$.

\begin{lemma}[See {\cite[Corollary 2.15]{Marar1989}} and {\cite[Corollary 2.8]{Houston2010}}]\label{lem: dkgamma}
If $f:(\CC^n,S)\rightarrow (\CC^p,0)$ is a finite germ of corank $1$, $n < p$, and $\sigma\in\Sigma_k$, then:
\begin{enumerate}
	\item If $f$ is stable, $D^k(f)^\sigma$ is smooth of dimension $p-k(p-n)-k+\sigma^\num$, or empty.\label{iPD}
	\item $\eqA_e-codim(f)$ is finite if, and only if:\label{iiPD}
	\begin{enumerate}
		\item for each $k$ with $p-k(p-n)-k+\sigma^\num\geq0$, $D^k(f)^\sigma$ is empty or an ICIS of dimension $p-k(p-n)-k+\sigma^\num$,
		\item for each $k$ with $p-k(p-n)-k+\sigma^\num<0$, $D^k(f)^\sigma$ is a subset of $S^k$, possibly empty.
	\end{enumerate}
\end{enumerate}
\label{DP}
\end{lemma}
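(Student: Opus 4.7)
The strategy is to work in the corank one normal form and exploit the explicit divided differences description of the multiple point spaces due to Marar--Mond. One would choose coordinates so that $f(u,y)=\big(u,f_n(u,y),\ldots,f_p(u,y)\big)$ with $u\in(\CC^{n-1},S)$ and $y\in\CC$. Then $D^k(f)$ embeds in $\CC^{n-1}\times\CC^k$ as the zero set of the iterated divided differences
\[
f_j[y_1,\ldots,y_i]=0,\qquad j=n,\ldots,p,\ \ i=2,\ldots,k,
\]
a total of $(k-1)(p-n+1)$ equations, and the symmetric group $\Sigma_k$ acts by permuting the $y$-coordinates. This is the only input from the theory of multiple points that I need.

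To analyse $D^k(f)^\sigma$, decompose $\sigma$ into cycles of lengths $r_1,\ldots,r_m$ with $m=\sigma^\num$; the fixed-point condition forces all $y$-variables within a cycle to coincide, so parametrizing each cycle by a single variable $z_j$ places $D^k(f)^\sigma$ inside $\CC^{n-1}\times\CC^m$. Substituting $y_i=z_{[i]}$ into the divided difference equations and using the behaviour of divided differences at coalescing arguments, the defining system splits cleanly into two blocks: (i) the $(m-1)(p-n+1)$ ``collision'' equations that cut out $D^m(f)$ at $(z_1,\ldots,z_m)$, and (ii) for each cycle of length $r_j\geq 2$, the derivative conditions $\partial_y^s f_l(u,z_j)=0$ for $s=1,\ldots,r_j-1$ and $l=n,\ldots,p$. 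These total $(k-1)(p-n+1)$ equations, giving the expected codimension inside $\CC^{n-1}\times\CC^m$ and the dimension formula $p-k(p-n)-k+\sigma^\num$.

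For part (\ref{iPD}) I would invoke Mather's infinitesimal criterion for stability, which forces the Jacobian of the system above to have maximal rank everywhere on $D^k(f)^\sigma$; hence the space is either empty or smooth of the expected dimension. Part (\ref{iiPD}) amounts to relaxing ``smooth'' to ``ICIS''. For the ``only if'' direction, I would pass to a one-parameter stable unfolding $F(x,\ubm)=\big(f_\ubm(x),\ubm\big)$ provided by \cite[Proposition 7.2]{Mond2020}; applying (\ref{iPD}) to $F$ shows that $D^k(F)^\sigma$ is smooth, so $D^k(f)^\sigma=D^k(F)^\sigma\cap\{\ubm=0\}$ is either an ICIS (when the expected dimension is non-negative) or an isolated subset of $S^k$ (when it is negative).

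The main obstacle is the ``if'' direction of (\ref{iiPD}): deducing finiteness of $\mathscr{A}_e$-codim from the ICIS/finiteness hypothesis on every stratum. The tool is the characterization of $\mathscr{A}$-finiteness as the instability locus being supported at $S$, which via the divided differences setup becomes a statement that the Jacobian ideals of each of the systems above are $\mfrak$-primary of the correct codimension. The pure-cycle case $\sigma=(1\,2\cdots k)$ captures the essential obstruction, as it encodes the vanishing orders of $f_j$ in the $y$-direction at $S$, and the general mixed-cycle case can be handled by inducting on $k$ using the product structure of $D^k(f)^\sigma$ modulo the collision ideal.
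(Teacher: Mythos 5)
The paper does not prove this lemma: it is quoted verbatim from the literature (Marar--Mond \cite[Corollary 2.15]{Marar1989} and Houston \cite[Corollary 2.8]{Houston2010}), so there is no in-paper argument to compare against. Measured against the actual Marar--Mond proof, your sketch follows the right overall strategy (divided differences, cycle decomposition, Mather--Gaffney), and your codimension count $(k-1)(p-n+1)$ is correct, but the two steps that carry the real content are asserted rather than proved. First, for part (1) you say that Mather's infinitesimal criterion ``forces the Jacobian of the system to have maximal rank everywhere''; that implication \emph{is} the theorem, and in Marar--Mond it is established by reducing, via the classification of stable corank-one multigerms, to explicit normal forms for which $D^k(f)^\sigma$ is computed directly. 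Second, the ``if'' direction of part (2) requires the \emph{converse} of part (1) --- that smoothness of all the $D^k(f)^\sigma$ of the expected dimensions at a multiple point implies stability of the corresponding multigerm --- since the argument is: ICIS everywhere $\Rightarrow$ smooth off the fibre over $0$ $\Rightarrow$ stable off $0$ $\Rightarrow$ $\Ascr$-finite by Mather--Gaffney. You never address that converse, and your closing remarks about ``Jacobian ideals being $\mfrak$-primary'' and ``inducting on the pure-cycle case'' do not substitute for it.

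Two further, more local issues. Your description of the defining ideal of $D^k(f)^\sigma$ as ``the collision equations cutting out $D^m(f)$ plus pure derivative conditions'' is not correct as an ideal-theoretic statement: the correct generators are the divided differences with \emph{repeated arguments} $f_j[z_1^{(r_1)},\dots,z_m^{(r_m)}]$, and these do not generate the same ideal as $\{f_j[z_1,\dots,z_i]\}\cup\{\partial_y^s f_j(z_\ell)\}$ (already for $f=y^3$, $k=3$, $\sigma=(1\,2)(3)$ one gets $2z_1+z_2$ from the repeated-argument divided difference, which does not lie in the ideal generated by $3z_1^2$ and $z_1^2+z_1z_2+z_2^2$). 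Since ``smooth'' and ``ICIS'' are statements about the scheme structure, this matters. Finally, in the ``only if'' direction of part (2), slicing the smooth $D^k(F)^\sigma$ by $\{\ubm=0\}$ only gives a complete intersection of dimension \emph{at least} $d_k^\sigma$; both the exact dimension and the isolatedness of the singular locus again require Mather--Gaffney (stability of $f$ off the origin), which you invoke only for the other implication.
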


From the previous result we have the following definition.

\begin{definition}\label{def:expecteddimension}
We will say that the \textit{expected dimension} of $D^k(f)$ (and $D^k(f_t)$) is $d_k\coloneqq p-k(p-n)$, and the \textit{expected dimension} of $D^k(f)^\sigma$ (and $D^k(f_t)^\sigma$) is $d_k^\sigma\coloneqq p-k(p-n)-k+\sigma^\num$. If we want to emphasize the map, we shall write $d_k(f)$ instead of $d_k$.
\end{definition}

\begin{remark}\label{rem: eqs Dk}
For (mono-) germs of corank one, $D^k(f)$ can be seen as a subset of $\CC^{n+k-1}$ (see, for example, \cite[end of p. 371]{Mond1987} or \cite[p. 52]{GimenezConejero2022}). In that case, it is possible to give equations in an easy way by taking \textsl{divided differences}, this is what we will do in \cref{sec:C3C4}.
These equations will not be equivariant, but it is possible to give equivariant equations. See \cite[Section 9.5]{Mond2020}.
\end{remark}

\begin{remark}\label{rem:propertiesDk}
\cref{lem: dkgamma} shows several things. Assume that $f$ is an $\eqA$-finite monogerm, $k$ and $\sigma$ are such that $d_k,d_k^\sigma\geq0$. For a stable perturbation $f_s$ of $f$, $D^k(f_s)$ is the Milnor fiber of the ICIS $D^k(f)$, since it is a smooth deformation. Also, $D^k(f_s)^\sigma$ is empty if, and only if, $D^k(f_s)$ is empty, since $(0;\dots; 0)\in D^k(f)$ otherwise. 
This leads to the following: since any singular ICIS has positive Milnor number, one expects that there is always non-trivial alternating homology in $D^k(f_s)$ if $D^k(f)$ is singular. This was recently shown by Giménez Conejero in \cite[Theorem 4.8]{GimenezConejero2022c}. He also gave a way of computing the alternating homology by using the spaces $D^k(f_s)^\sigma$ that we mention later.
\end{remark}

Finally, our techniques also allows us to address more than just the image of stable deformations $f_s$, we can study the (closure of the) set of points in the image with at least $k$ preimages. These objects were studied for example in \cite{Houston2001,Houston2002,Houston2002a}.

\begin{definition}
Let $f\colon X\rightarrow Y$ be a locally stable mapping between complex manifolds, and $\pi:D^k(f)\to X$ the induced map from the projection onto the first coordinate $X^k\to X$. Then, the \textit{$k$-th multiple points in the image} is the set $M^k(f)\coloneqq f\circ\pi\big(D^k(f)\big)$. In particular, $M^1(f)=\im(f)$.
\end{definition}

\subsection{Real deformations}

Usually, e.g. \cite{Mond1996,Cooper1998,Cooper2002,Houston2002a}, the definition of good real perturbation concerns only the non-trivial reduced Betti numbers of the image (or, more generally, discriminant) of $f_s$, this is the reason we introduce the concept of \textsl{complete} good real picture below. One of the goals of this work is to show relations between the following definitions, see \cref{cor:GRPisCGRP,prop:good=excellent}.

\begin{definition}\label{def:grp}
Let $f:(\CC^n,S)\to(\CC^p,0)$ and $f^\RR:(\RR^n,S')\to(\RR^p,0)$ be two $\Ascr$-finite germs, with respective stable perturbations $f_s$ and $f^\RR_s$. Assume that the complexification $f^\CC$ of $f^\RR$ is $\Ascr$-equivalent to $f$. Then, we say that
\begin{enumerate}[label=(\roman*)]
	\item $f^\RR_s$ is a \textit{good real perturbation} of $f$ if $\beta_i\big(\im(f^\RR_s)\big)=\beta_i\big(\im(f_s)\big)$ whenever $\beta_i\big(\im(f_s)\big)\neq0$. In that case, we say that $f$ has a \textit{good real picture}.
   \item $f^\RR_s$ is a \textit{complete good real perturbation} of $f$ if  the discriminants of $f^\RR_s$ and $f_s$ have the same Betti numbers.
   \item $f^\RR_s$ is an \textit{excellent real perturbation} of $f$ if $\beta_i\big(M^k(f^\RR_s)\big)=\beta_i\big(M^k(f_s)\big)$ for all $i,k$.
\end{enumerate}
\end{definition}

One of the key problems in the search of a good real perturbation is that there is not \textsl{a unique} stable perturbation in the real case (up to homeomorphisms), compared to the complex case which is unique up to homeomorphism. Their topological types depend on the unfolding itself and the sign of the parameters. See, for example, \cref{fig:curvasE6}.

\begin{figure}
	\centering
  \includegraphics[width=1\textwidth]{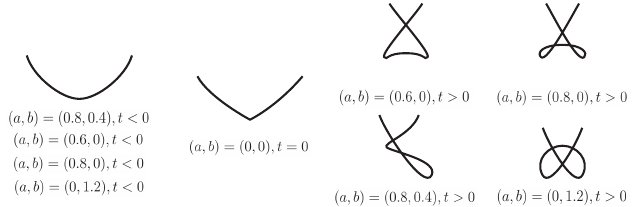}
\caption{Topological types of the deformation $f^\RR_{a,b,t}(x)=(x^3-tx,x^4-tax^2-tbx)$ for different values of $a,b$ and $t$.
}
\label{fig:curvasE6}
\end{figure}

As we were introducing before, we also deal with $M$-perturbations.

\begin{definition}\label{def:Mperturbation}
Let $f:(\CC^n,S)\to(\CC^p,0)$ and $f^\RR:(\RR^n,S')\to(\RR^p,0)$ be two $\Ascr$-finite germs, with respective stable perturbations $f_s$ and $f^\RR_s$. Assume that the complexification $f^\CC$ of $f^\RR$ is $\Ascr$-equivalent to $f$. Then, we say that $f_s^\RR$ is an \textit{$M$-perturbation} of $f$ (also, of $f^\RR$) if it has the same stable singularities of dimension zero than $f_s$.
\end{definition}

\subsection{Equivariant Smith theory}\label{sec:equivariant_st}

A natural way to relate a real variety and its complexification is by the action of $\nicefrac{\ZZ}{2\ZZ}$ by conjugation and Smith Theory, which is well exemplified in the following theorem of Floyd, \cite[Theorem 4.4]{Floyd1952}.

\begin{theorem}\label{thm:ST_ineq}
    Let $X$ be a locally compact finite-dimensional Hausdorff space.
    Let $G$ be a cyclic group of prime order $p$ acting over $X$ and denote by $X^G$ the set of points fixed by the action.
    Then, for each $N \geq 0$,
    \[ \sum_{i=N}^\infty \beta_i(X;\Fbb_p)\geq\sum_{i=N}^\infty \beta_i(X^{G};\Fbb_p).\]
\end{theorem}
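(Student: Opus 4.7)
The plan is to apply classical Smith theory via the two group-ring operators $\sigma = 1 - t$ and $\tau = 1 + t + t^2 + \cdots + t^{p-1}$ in $\Fbb_p[G]$, where $t$ is a generator of $G\cong \ZZ/p\ZZ$. The algebraic engine of the whole proof is the pair of identities $\sigma\tau = \tau\sigma = 0$ and $\sigma^{p-1} = \tau$, the latter coming from $(1-t)^p = 1 - t^p = 0$ in characteristic $p$; in particular $\ker\sigma = \im\tau$ and $\ker\tau = \im\sigma$ on any $\Fbb_p[G]$-module.

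First I would introduce the Smith special cochain complexes $\sigma C^*(X;\Fbb_p)$ and $\tau C^*(X;\Fbb_p)$ as subcomplexes of an equivariant model for $\Fbb_p$-cohomology that is well-adapted to the stated generality of $X$ (locally compact, finite-dimensional, Hausdorff), for instance Alexander--Spanier or \v{C}ech cochains with compact supports; finite-dimensionality is exactly the hypothesis that guarantees the spectral-sequence arguments below terminate. Let $H^*_\sigma(X)$ and $H^*_\tau(X)$ denote their cohomology. The fixed-point set enters the picture through the observation that a cochain is $G$-invariant if and only if it lies in $\ker\sigma$, and $\sigma$ and $\tau$ vanish on cochains supported on $X^G$. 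Combining these remarks with the algebraic relations above produces the fundamental Smith short exact sequences
\[ 0 \longrightarrow \rho\, C^*(X) \oplus C^*(X^G) \longrightarrow C^*(X) \longrightarrow \rho'\, C^*(X) \longrightarrow 0, \]
where $\{\rho,\rho'\} = \{\sigma,\tau\}$, and the associated long exact sequences tie $H^*_\sigma(X)$, $H^*_\tau(X)$, $H^*(X^G;\Fbb_p)$ and $H^*(X;\Fbb_p)$ together.

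To extract the tail-sum inequality I would run the following counting argument. For each fixed $N\geq 0$ the connecting homomorphisms of the long exact sequence above have degree $+1$, so truncating the sequence at degrees $\geq N$ and taking alternating dimensions yields
\[ \sum_{i\geq N}\dim H^i_\rho(X) + \sum_{i\geq N}\dim H^i(X^G;\Fbb_p) \leq \sum_{i\geq N}\dim H^i(X;\Fbb_p) + \sum_{i\geq N}\dim H^i_{\rho'}(X), \]
with an analogous inequality obtained by swapping $\rho$ and $\rho'$. Adding the two and cancelling the common Smith terms produces
\[ \sum_{i\geq N}\dim H^i(X^G;\Fbb_p) \leq \sum_{i\geq N}\dim H^i(X;\Fbb_p),\]
which, after passing back from compactly supported \v{C}ech cohomology to ordinary Betti numbers under the standing finite-dimensional local-compactness hypothesis, gives the stated inequality for each $N$.

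The hard part will be the passage from the transparent CW/simplicial setting, where the Smith operators and the exact sequences above are essentially formal, to the general topological category of Floyd's theorem: one must verify that the Smith operators act on a cohomology theory that behaves well on arbitrary locally compact finite-dimensional Hausdorff spaces, and that the short exact sequences remain exact after sheafification. This is precisely where the hypotheses on $X$ are used, and it is also the reason we cite Floyd's \cite{Floyd1952} original treatment rather than reproducing the proof here.
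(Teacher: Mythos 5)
The paper does not actually prove this statement: it is quoted directly from Floyd \cite{Floyd1952}. So there is nothing to match your argument against verbatim. However, the paper does prove the alternating-homology analogue, \cref{thm:STtau}, by exactly the machinery you describe (the operators $\eta=1-g$ and $\kappa=1+g+\cdots+g^{p-1}$, the Smith short exact sequences, and a counting argument on the two resulting long exact sequences, following Putman \cite{PutmanSTnotes}), so your overall strategy is the right one and is the same one the authors use for their companion result. Your algebra on $\Fbb_p[G]$ ($\sigma\tau=0$, $\sigma^{p-1}=\tau$, $\ker\sigma=\im\tau$, etc.) and the form of the short exact sequence match \cref{lem:fp_short_exact_2} and \cref{prop:smith_alt_se_sequence}.

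There is, however, a concrete gap in your counting step. You place $C^*(X^G)$ in the subcomplex and work with cochains, so the connecting homomorphism has degree $+1$ and the long exact sequence reads $\cdots\to H^{k-1}_{\rho'}(X)\to H^k_{\rho}(X)\oplus H^k(X^G)\to H^k(X)\to H^k_{\rho'}(X)\to\cdots$. Exactness at the middle term gives $\dim H^k_\rho+\dim H^k(X^G)\le \dim H^{k-1}_{\rho'}+\dim H^k(X)$, and summing over $k\ge N$ produces $\sum_{i\ge N-1}\dim H^i_{\rho'}$ on the right, not $\sum_{i\ge N}\dim H^i_{\rho'}$ as you wrote. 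The stray boundary term $\dim H^{N-1}_{\rho'}(X)$ does not cancel when you add the two inequalities; you only obtain $2\sum_{i\ge N}\beta_i(X^G)\le 2\sum_{i\ge N}\beta_i(X)+\dim H^{N-1}_{\sigma}(X)+\dim H^{N-1}_{\tau}(X)$, which is weaker than the theorem. The fix is standard and small: either work homologically (then the connecting map lowers degree, the incoming term is $H_{N+1}^{\rho'}$, which lies inside the tail, and your ``sum the tails and add'' step goes through), or run the descending induction that alternates between the $\rho$- and $\bar\rho$-sequences degree by degree so the Smith terms cancel telescopically with a single leftover of favorable sign --- this is exactly what the proof of \cref{thm:STtau} in the paper does. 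Your closing remarks about the sheaf-theoretic work needed to justify the Smith sequences for general locally compact finite-dimensional spaces are fair; that is the part the paper outsources to Floyd entirely.
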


Since the image of a finite map is given by the alternating homology of the multiple point spaces, as introduced above, we need a similar version of this theorem for the homology $AH_*(X,\Fbb_p)$ of the alternating chain complex $C_*^\Alt(X, \Fbb_p)$.

\begin{definition}
    Let $X$ be a simplicial complex, we say that it is a simplicial $G$-complex if $G$ acts over $X$ taking simplexes to simplexes and fixing them point by point whenever they are fixed.
\end{definition}

Assume now that $X$ is a simplicial $\Sigma_k$-complex, where $\Sigma_k$ is the permutation group. This is precisely the situation of multiple point spaces of corank one germs, see \cite[Section 5]{Houston2007}.
Let $A\beta_i(X,\Fbb_p)$ denote the rank of $AH_i(X,\Fbb_p)$. We want to show the following.

\begin{theorem}\label{thm:STtau}
Let $G$ be a $p$-group, $\Sigma_k$ be a group of permutations and $X$ a simplicial complex such that $X$ is both a simplicial $G$ and $\Sigma_k$-complex, and assume that both actions commute. Then, for any $N\geq0$,
\[ \sum_{i=N}^\infty A\beta_i(X;\Fbb_p)\geq\sum_{i=N}^\infty A\beta_i(X^{G};\Fbb_p).\]
\end{theorem}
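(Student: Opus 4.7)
The plan is to extend Floyd's classical proof of \cref{thm:ST_ineq} from the singular chain complex to the alternating chain complex $C_*^\Alt(X;\Fbb_p)$, exploiting the commutativity of the $G$- and $\Sigma_k$-actions. First, I would reduce to the case $G = \ZZ/p$ by the standard dévissage: any non-trivial $p$-group contains a central subgroup $H\cong\ZZ/p$, and since $H\subset G$ still commutes with $\Sigma_k$, proving the inequality for $H$ acting on $X$ and then for $G/H$ acting on $X^H$ closes the induction via $(X^H)^{G/H}=X^G$.

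For $G=\langle\tau\rangle\cong\ZZ/p$, the commutativity hypothesis ensures that the Smith operators $\rho=\tau-1$ and $\sigma=1+\tau+\cdots+\tau^{p-1}=\rho^{p-1}\in\Fbb_p[G]$ preserve $C_*:=C_*^\Alt(X;\Fbb_p)$, and that $L_*:=C_*^\Alt(X^G;\Fbb_p)$ is a $G$-stable subcomplex on which $\rho$ acts as zero. From here I would build Floyd's filtration
\[C_* = L_* + \rho^0 C_* \supseteq L_* + \rho^1 C_* \supseteq \cdots \supseteq L_* + \rho^p C_* = L_*,\]
extract the short exact sequences of chain complexes
\[0\to L_*+\rho^{r+1}C_*\to L_*+\rho^r C_*\to (L_*+\rho^rC_*)/(L_*+\rho^{r+1}C_*)\to 0,\]
and telescope the ranks of the associated long exact sequences. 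As in Floyd's argument, the successive quotients all coincide up to a degree shift induced by $\rho$, so the bookkeeping collapses to
\[\sum_{i\ge N}A\beta_i(X;\Fbb_p)\ \ge\ \sum_{i\ge N}A\beta_i(X^G;\Fbb_p).\]

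The hard part is verifying the algebraic input that makes the telescoping succeed: namely, that $C_*/L_*$ is cohomologically trivial as an $\Fbb_p[G]$-module, so that multiplication by $\rho$ induces an isomorphism $\rho^rC_*/(\rho^{r+1}C_*+L_*)\cong\rho^{r+1}C_*/(\rho^{r+2}C_*+L_*)$ for $0\le r\le p-2$. Classically this is immediate because $C_*(X)/C_*(X^G)$ is $\Fbb_p[G]$-free, with a basis given by the free $G$-orbits of simplices; but in the alternating setting this is more delicate, since the sign-isotype of a $\Sigma_k$-representation need not split off as a direct summand in characteristic $p$. I would handle it by an orbit-by-orbit analysis: any $(G\times\Sigma_k)$-orbit of simplices in $X\setminus X^G$ has stabilizer meeting $G\times\{e\}$ trivially (because $G$ acts freely off $X^G$), hence is the graph of a homomorphism $H\to G$ for some $H\le\Sigma_k$, and a direct check on the sign-isotype of the corresponding induced permutation module shows it is $\Fbb_p[G]$-cohomologically trivial. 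This step is precisely where the commutation of the two actions is used essentially, and is where I expect the real work of the proof to lie.
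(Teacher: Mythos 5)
Your reduction to $G=\ZZ/p$ and the overall Smith--Floyd skeleton (powers of $\rho=1-g$, short exact sequences of chain complexes, telescoping the long exact sequences) match the paper's argument, which follows Putman's version of Floyd; the paper packages the induction through the single short exact sequence of \cref{prop:smith_alt_se_sequence} rather than through your filtration $L_*+\rho^rC_*$, but these are equivalent bookkeeping devices. What decides the matter is the algebraic input you defer to a ``direct check'', and that check fails.

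The claim that the sign-isotype of every $(G\times\Sigma_k)$-orbit of simplices off $X^G$ is $\Fbb_p[G]$-cohomologically trivial is false precisely for the orbits whose stabilizer is the graph of a \emph{surjective} homomorphism $\phi\colon H\to G$. Simplest instance: $p=k=2$ and an orbit $\{\Delta,\Delta'\}$ with $g\Delta=\tau\Delta=\Delta'$, where $\tau$ is the transposition; the stabilizer of $\Delta$ is $\{(1,1),(g,\tau)\}$. The integral alternating chains supported on this orbit form $\ZZ\langle\Delta-\Delta'\rangle$, on which $g$ acts by $-1$; after tensoring with $\Fbb_2$ this is the trivial one-dimensional $\Fbb_2[G]$-module, whose Tate cohomology is $\Fbb_2$ in every degree. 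In general, whenever $\phi$ is surjective and $\ker\phi$ contains no odd permutation, the orbit contributes a trivial rank-one $\Fbb_p[G]$-summand to $C_*^\Alt(X)/C_*^\Alt(X^G)$ on which $\rho$ acts as zero; so $\rho$ kills alternating classes that lie neither in $\bar\rho\, C_*^\Alt(X)$ nor over $X^G$, the successive quotients of your filtration are no longer isomorphic under multiplication by $\rho$, and the telescoping collapses. These orbits are not exotic for the intended application: a point $(z,\bar z)\in D^2(f_s)$ with $z\neq\bar z$ realizes exactly this configuration, since conjugation acts on it as the transposition. (This is also exactly the configuration at which the proof of \cref{prop:smith_alt_se_sequence} is thinnest: on the two-point orbit above, $\ker(\rho\cdot)\cap C_*^\Alt(X)$ is one-dimensional while $C_*^{\Alt,\bar\rho}(X)\oplus C_*^\Alt(X^G)$ is zero, so the appeal to commutativity of the two actions does not by itself give the asserted identity.) You have located the crux correctly, but the proposed resolution does not close it: a correct proof must handle the graph-type orbits head-on rather than show that they do not contribute.
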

To obtain Floyd's result, he makes use of Smith's special homology groups.
In order to obtain ours, we will follow the approach of Putman in his notes \cite{PutmanSTnotes}, where he uses Bredon coefficient systems to obtain Smith's groups and prove the same result.

This approach of Bredon coefficient systems can be generalized to the alternating homology $AH_*$ case and the desired results follow.
However, for the sake of brevity, we will instead  construct directly the chain complexes and special homology groups needed for the proof.

\begin{remark}
    It is noted by Putman that it is enough to prove \cref{thm:STtau} for the case that $|G| = p$ by an inductive argument: if $|G| = p^{k+1}$, then it admits a proper non-trivial normal subgroup $G'$ and $G/G'$ acts on $X^{G'}$ in a natural way with fixed points
    $$\left(X^{G'}\right)^{G/G'} = X^G.$$
\end{remark}

Let $G$ be cyclic of order $p$ generated by $g$ and $\Fbb_p [G] \coloneqq\frac{\Fbb_p[g]}{(1-g^p)}$.
If $\eta \coloneqq 1 - g$ and $\kappa \coloneqq 1 + g + \cdots + g^{p-1}$ are elements of $\Fbb_p[G]$, then the following lemma is an easy exercise of modular arithmetic (see \cite[Lemmas 4.1 to 4.3]{PutmanSTnotes}).


\begin{lemma}
The following statements hold:
\begin{enumerate}[label=(\roman*)]
    \item $\kappa = \eta^{p-1}$
    \item The kernel of the homomorphism $\Fbb[G]_p\rightarrow \eta\cdot\Fbb_p[G]$ given by multiplication by $\eta$ is 1-dimensional and spanned by $\kappa$.
    \item $\kappa \in \eta^i\cdot\Fbb_p[G]$ for every $i = 0,\ldots, p-1$.
\end{enumerate}
\end{lemma}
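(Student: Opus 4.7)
The plan is to first reduce everything to a cleaner presentation of $\Fbb_p[G]$ in terms of $\eta$ alone. The key observation is that in characteristic $p$ we have $(1-\eta)^p = 1 - \eta^p$, so under the substitution $g = 1-\eta$ the defining relation $g^p = 1$ of $G$ becomes $\eta^p = 0$. Hence
\[
\Fbb_p[G] \;=\; \Fbb_p[g]/(g^p-1) \;\cong\; \Fbb_p[\eta]/(\eta^p),
\]
and $\{1,\eta,\eta^2,\dots,\eta^{p-1}\}$ is an $\Fbb_p$-basis (the change of basis matrix from $\{g^k\}$ is unitriangular). Everything below will be proved inside this truncated polynomial ring.

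For part (i), I would expand $\kappa$ directly using the substitution $g=1-\eta$ and the binomial theorem, then collect powers of $\eta$ by the hockey-stick identity:
\[
\kappa \;=\; \sum_{k=0}^{p-1}(1-\eta)^k \;=\; \sum_{j=0}^{p-1}(-1)^j\eta^j\!\!\sum_{k=j}^{p-1}\!\binom{k}{j} \;=\; \sum_{j=0}^{p-1}(-1)^j\binom{p}{j+1}\eta^j.
\]
Since $\binom{p}{j+1}\equiv 0\pmod p$ for $1\le j+1\le p-1$ and $\binom{p}{p}=1$, only the top term survives, giving $\kappa=(-1)^{p-1}\eta^{p-1}=\eta^{p-1}$ (the sign is irrelevant for $p=2$).

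For part (ii), writing a general element as $\alpha=\sum_{j=0}^{p-1}a_j\eta^j$ in the $\eta$-basis, one computes $\eta\alpha=\sum_{j=0}^{p-2}a_j\eta^{j+1}$, so $\eta\alpha=0$ forces $a_0=\cdots=a_{p-2}=0$; hence the kernel is the one-dimensional subspace spanned by $\eta^{p-1}$, which by (i) is exactly $\Fbb_p\cdot\kappa$. For part (iii), the statement is immediate from (i): since $\kappa=\eta^{p-1}$ and $0\le i\le p-1$, we can write $\kappa=\eta^i\cdot\eta^{p-1-i}\in\eta^i\cdot\Fbb_p[G]$.

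There is essentially no obstacle here; the only piece of actual computation is the binomial/hockey-stick manipulation in (i), and once the identification $\Fbb_p[G]\cong\Fbb_p[\eta]/(\eta^p)$ is in place, (ii) and (iii) reduce to trivial statements about the local ring $\Fbb_p[\eta]/(\eta^p)$ and its unique chain of ideals $(\eta)\supset(\eta^2)\supset\cdots\supset(\eta^{p-1})\supset 0$.
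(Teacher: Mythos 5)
Your proof is correct. The paper itself offers no argument for this lemma (it is dismissed as "an easy exercise of modular arithmetic" with a citation to Putman's notes), and your write-up is a clean, complete version of exactly that exercise: the identification $\Fbb_p[G]\cong\Fbb_p[\eta]/(\eta^p)$ via $g=1-\eta$ and $(1-\eta)^p=1-\eta^p$ makes (ii) and (iii) transparent, and your hockey-stick computation of (i) checks out (one could alternatively get (i) slightly more directly by expanding $\eta^{p-1}=(1-g)^{p-1}$ and using $\binom{p-1}{j}\equiv(-1)^j \pmod p$, but your route is equally valid).
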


As a consequence:

\begin{lemma}[see {\cite[Lemma 4.4]{PutmanSTnotes}}]\label{lem:fp_short_exact_1}
    For every $i=0,\ldots,p-1$, there is a short exact sequence given by the inclusion and multiplication by $\eta$,
    \[0\rightarrow \kappa \cdot \Fbb_p[G] \rightarrow \eta^i \cdot \Fbb_p[G]  \rightarrow \eta^{i+1} \cdot \Fbb_p[G]  \rightarrow 0. \]
\end{lemma}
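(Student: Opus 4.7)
The plan is to exploit the identification $\Fbb_p[G]\cong\Fbb_p[\eta]/(\eta^p)$, which follows from the Freshman's dream in characteristic $p$: since $1-g^p=(1-g)^p=\eta^p$, the kernel of the surjection $\Fbb_p[\eta]\twoheadrightarrow\Fbb_p[G]$ sending $\eta\mapsto 1-g$ is exactly the principal ideal $(\eta^p)$. Under this identification, the ideal $\eta^j\cdot\Fbb_p[G]$ is simply the standard filtration term of a local Artinian ring with residue field $\Fbb_p$: it is $\Fbb_p$-spanned by $\eta^j,\eta^{j+1},\dots,\eta^{p-1}$ and has dimension $p-j$. In particular $\kappa\cdot\Fbb_p[G]=\eta^{p-1}\cdot\Fbb_p[G]$ is one-dimensional, generated by $\kappa$.

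With this in hand, the three exactness statements are essentially formal. First, the inclusion $\kappa\cdot\Fbb_p[G]\hookrightarrow\eta^i\cdot\Fbb_p[G]$ is well defined precisely by item (iii) of the previous lemma, which states $\kappa\in\eta^i\cdot\Fbb_p[G]$ for all $i\leq p-1$. Second, the multiplication-by-$\eta$ map $\eta^i\cdot\Fbb_p[G]\to\eta^{i+1}\cdot\Fbb_p[G]$ is surjective by definition of the image: an arbitrary element $\eta^{i+1}a$ of the target equals $\eta\cdot(\eta^i a)$. The only substantive step is the kernel computation: if $x=\eta^i a$ lies in the kernel, then $\eta^{i+1}a=0$ in $\Fbb_p[\eta]/(\eta^p)$, which forces $a\in(\eta^{p-i-1})$, so $x=\eta^{i}\cdot\eta^{p-i-1}b=\eta^{p-1}b=\kappa b\in\kappa\cdot\Fbb_p[G]$. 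The converse inclusion is immediate since $\eta\cdot\kappa=\eta^p=0$.

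There is no real obstacle here beyond being careful about what ambient ring the various equalities take place in; the statement is a direct consequence of the isomorphism $\Fbb_p[G]\cong\Fbb_p[\eta]/(\eta^p)$ combined with the preceding lemma's description of multiplication by $\eta$. One could alternatively organize the argument by invoking only the preceding lemma without explicitly naming the truncated polynomial ring: the kernel of multiplication by $\eta$ on $\Fbb_p[G]$ is spanned by $\kappa$ (by (ii)), and restricting to the submodule $\eta^i\cdot\Fbb_p[G]$ one finds that this kernel remains contained in it (again by (iii)), giving exactness at the middle term.
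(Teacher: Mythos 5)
Your proof is correct, and the paper itself gives no argument for this lemma --- it is stated as an immediate consequence of the preceding lemma with a citation to Putman's notes. Your identification $\Fbb_p[G]\cong\Fbb_p[\eta]/(\eta^p)$ and the resulting kernel computation is exactly the standard derivation, and your closing remark (that exactness at the middle term follows from items (ii) and (iii) of the preceding lemma restricted to the submodule $\eta^i\cdot\Fbb_p[G]$) is precisely the route the paper implicitly intends.
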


Using both lemmas we can construct the following short exact sequence. We include its proof for completion, see also \cite[Lemmas 4.5 and 4.6]{PutmanSTnotes}.

\begin{lemma}\label{lem:fp_short_exact_2}
    For any $i=0,\ldots,p$, there is a short exact sequence 
    $$0\rightarrow \bar\rho \cdot \Fbb[G] \rightarrow  \Fbb[G]  \rightarrow \rho \cdot \Fbb[G]  \rightarrow 0,$$
    where $\rho \coloneqq \eta^i$ and $\bar\rho \coloneqq \eta^{p-i}$.
\end{lemma}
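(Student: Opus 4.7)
The plan is to take the two natural maps: the inclusion $\bar\rho\cdot\Fbb_p[G]\hookrightarrow\Fbb_p[G]$ on the left, and multiplication by $\rho$, i.e.\ $x\mapsto\rho x$, from $\Fbb_p[G]$ onto $\rho\cdot\Fbb_p[G]$ on the right. Surjectivity of the right-hand map is tautological, so the only real content is exactness in the middle, namely the equality
\[\ker\bigl(\Fbb_p[G]\xrightarrow{\rho\cdot}\rho\cdot\Fbb_p[G]\bigr)=\bar\rho\cdot\Fbb_p[G].\]

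One inclusion is immediate: since $\rho\bar\rho=\eta^{i}\eta^{p-i}=\eta^{p}=\eta\cdot\eta^{p-1}=\eta\cdot\kappa=(1-g)(1+g+\cdots+g^{p-1})=1-g^{p}=0$ in $\Fbb_p[G]$, any element of $\bar\rho\cdot\Fbb_p[G]$ is annihilated by multiplication by $\rho$. For the reverse inclusion I would argue by dimensions. Iterating the short exact sequence of \cref{lem:fp_short_exact_1} from $j=0$ up to $j=i-1$ (and using that the kernel at each stage is the $1$\mdash dimensional space $\kappa\cdot\Fbb_p[G]$) gives $\dim_{\Fbb_p}\eta^{i}\cdot\Fbb_p[G]=p-i$ for all $i=0,\dots,p-1$. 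By rank\mdash nullity applied to multiplication by $\rho=\eta^{i}$ on $\Fbb_p[G]$,
\[\dim_{\Fbb_p}\ker(\rho\,\cdot)=p-\dim_{\Fbb_p}\rho\cdot\Fbb_p[G]=p-(p-i)=i,\]
while the same dimension formula with $p-i$ in place of $i$ gives $\dim_{\Fbb_p}\bar\rho\cdot\Fbb_p[G]=p-(p-i)=i$. Combined with the inclusion $\bar\rho\cdot\Fbb_p[G]\subseteq\ker(\rho\,\cdot)$, this forces equality and proves exactness.

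Finally I would record the degenerate cases $i=0$ and $i=p$ separately, where either $\rho=1$ and $\bar\rho=0$ (giving $0\to 0\to\Fbb_p[G]\xrightarrow{\mathrm{id}}\Fbb_p[G]\to 0$) or $\rho=0$ and $\bar\rho=1$ (giving $0\to\Fbb_p[G]\xrightarrow{\mathrm{id}}\Fbb_p[G]\to 0\to 0$); in both the statement is trivial. I do not foresee a real obstacle here: the only delicate point is making sure the dimension count $\dim_{\Fbb_p}\eta^{i}\cdot\Fbb_p[G]=p-i$ is legitimate for every $i\in\{0,\dots,p\}$, which is exactly what the telescoping of \cref{lem:fp_short_exact_1} provides together with the convention $\eta^{p}\cdot\Fbb_p[G]=0$.
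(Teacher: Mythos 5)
Your proposal is correct and follows essentially the same route as the paper: left map the inclusion, right map multiplication by $\rho$, and exactness in the middle obtained from the dimension count $\dim_{\Fbb_p}\eta^{i}\cdot\Fbb_p[G]=p-i$ deduced by telescoping \cref{lem:fp_short_exact_1}. Your explicit verification that $\rho\bar\rho=\eta^{p}=0$ (which the paper leaves implicit) and your treatment of the degenerate cases $i=0,p$ are welcome additions but do not change the argument.
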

\begin{proof}
    The inclusion of $\bar\rho \cdot \Fbb[G]$ in $\Fbb[G]$ is clearly injective, and product by $\rho$ is a surjective map from $\Fbb[G]$ to $\rho\cdot\Fbb[G]$.
    Now, exactness follows by computing the dimension of $\bar\rho \cdot \Fbb_p[G]$ and $\rho \cdot \Fbb_p[G]$ over $\Fbb_p$. Observe that $\dim_{\Fbb_p} \eta^0\cdot\Fbb_p[G] =\dim_{\Fbb_p} \Fbb[G] = p$ and that, by \cref{lem:fp_short_exact_1},
    $$\dim_{\Fbb_p} \eta^{i+1} \cdot \Fbb_p[G] = \dim_{\Fbb_p} \eta^i\cdot\Fbb_p[G] - \dim_{\Fbb_p}\kappa\cdot\Fbb_p[G] = \dim_{\Fbb_p}\eta^i\cdot\Fbb_p[G] - 1.$$
    Hence, $$\dim_{\Fbb_p}\rho \cdot \Fbb_p[G] + \dim_{\Fbb_p}\bar\rho \cdot \Fbb_p[G] = (p-i)+i = p = \dim_{\Fbb_p}\Fbb_p[G].\qedhere$$
\end{proof}

Now, assume that $X$ is a simplicial $G$-complex.
Then the set of fixed points $X^G$ is a simplicial subcomplex.
Since the action is simplicial we can consider the quotient by action of $G$ restricted to the set $X^{(n)}$ consisting of $n$-simplexes of $X$.
Then, we can just write
\begin{align*}
    C_*(X,\Fbb_p) &= \bigoplus_{Y\in X^{(*)}/G} \left\lbrace \sum_{\Delta \in Y}a_\Delta\cdot \Delta \,|\, a_\Delta\in \Fbb_p\right\rbrace, \text{ and}\\
    C_*(X^G,\Fbb_p) &= \bigoplus_{\substack{Y\in X^{(*)}/G \\ |Y| = 1}} \big\lbrace a_\Delta\cdot \Delta \,|\, a_\Delta\in \Fbb_p, \Delta \in Y\big\rbrace.
\end{align*}
Fix an $i=1,\ldots,p-1$.
Then the polynomial $\rho = \eta^i$ acts naturally over $C_*(X,\Fbb_p)$.
Define the image of this action as the complex $ C_*^\rho(X,\Fbb_p)$.

Lastly, assume that $X$ is both a simplicial $G$-complex and a simplicial $\Sigma_k$-complex, with $\Sigma_k$ being the group of permutations.
Assume moreover that the action of $G$ commutes with that of $\Sigma_k$.
Then, it makes sense to write $C_*^{\Alt,\rho}(X,\Fbb_p)$ as the restriction of the action of $\rho$ over the subcomplex $C_*^\Alt(X,\Fbb_p) = \lbrace c\in C_*(X,\ZZ) \,|\, \sigma c = \sgn(\sigma)c\text{ for each }\sigma\in\Sigma_k\rbrace \otimes \Fbb_p$. 

\begin{proposition}\label{prop:smith_alt_se_sequence}
    If $X$ is a simplicial $G$ and $\Sigma_k$-complex whose actions commute, then there is a short exact sequence of complexes
    $$0\rightarrow  C_*^{\Alt,\bar\rho}(X,\Fbb_p)\oplus C_*^\Alt(X^G,\Fbb_p)\rightarrow C_*^\Alt(X,\Fbb_p) \rightarrow C_*^{\Alt,\rho}(X,\Fbb_p) \rightarrow 0.$$
\end{proposition}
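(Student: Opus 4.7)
The plan is to obtain the desired short exact sequence from the purely algebraic one in \cref{lem:fp_short_exact_2} by applying it orbit-by-orbit on the chain complex $C_*(X,\Fbb_p)$ and then restricting to the $\Sigma_k$-alternating subcomplex. Since $G$ fixes point-by-point any simplex it stabilises, each $G$-orbit of simplices is either a single fixed simplex or a free orbit of size $p$, which yields a $G$-equivariant decomposition
\[ C_n(X,\Fbb_p) = C_n(X^G,\Fbb_p) \oplus C_n^{\mathrm{nf}}(X,\Fbb_p), \]
where each summand of $C_n^{\mathrm{nf}}$ associated to a single free orbit is isomorphic to $\Fbb_p[G]$ as an $\Fbb_p[G]$-module. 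On the fixed part, $g$ acts as the identity, so $\eta = 1-g$ is zero and hence so are $\rho$ and $\bar\rho$; on each free orbit one applies \cref{lem:fp_short_exact_2} directly. Assembling these pieces produces the short exact sequence of $\Fbb_p$-chain complexes
\[ 0 \to C_*^{\bar\rho}(X,\Fbb_p) \oplus C_*(X^G,\Fbb_p) \to C_*(X,\Fbb_p) \to C_*^\rho(X,\Fbb_p) \to 0, \]
with inclusion on the left and multiplication by $\rho$ on the right.

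Next, I would verify that this sequence is $\Sigma_k$-equivariant. Since the $G$- and $\Sigma_k$-actions commute, any $\sigma\in\Sigma_k$ sends $G$-orbits to $G$-orbits while preserving their size; in particular $C_*(X^G,\Fbb_p)$, $C_*^{\mathrm{nf}}(X,\Fbb_p)$, $C_*^\rho(X,\Fbb_p)$ and $C_*^{\bar\rho}(X,\Fbb_p)$ are all $\Sigma_k$-subcomplexes, and the scalar $\rho\in\Fbb_p[G]$ commutes with $\Sigma_k$, so the arrows above are $\Sigma_k$-equivariant. Restricting to $C_*^\Alt(X,\Fbb_p)$, the map induced by multiplication by $\rho$ lands in $C_*^{\Alt,\rho}(X,\Fbb_p)$ by construction, so it is surjective. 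Its kernel is $\ker\rho\cap C_*^\Alt(X,\Fbb_p)$, which, because the two summands of $\ker\rho$ live on disjoint $\Sigma_k$-invariant parts of $X$ (fixed simplices versus free orbits), splits as
\[ \big(C_*^{\bar\rho}(X,\Fbb_p)\big)^\Alt \oplus C_*^\Alt(X^G,\Fbb_p). \]

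The remaining, and expected main, obstacle is the identification of $\big(C_*^{\bar\rho}(X,\Fbb_p)\big)^\Alt$ with $C_*^{\Alt,\bar\rho}(X,\Fbb_p)$. The inclusion $\bar\rho\cdot C_*^\Alt\subseteq C_*^{\bar\rho}\cap C_*^\Alt$ is immediate from the commutativity of the two actions. For the converse, given an alternating chain $c$ in the image of $\bar\rho$, one must produce an alternating preimage under $\bar\rho$; using the explicit description of $\ker(\bar\rho)=\rho\cdot\Fbb_p[G]$ on each free $G$-orbit from \cref{lem:fp_short_exact_2}, this reduces to an orbit-by-orbit adjustment of any preimage, exploiting the fact that $\Sigma_k$ respects the decomposition into free orbits and commutes with multiplication by $\rho$ and $\bar\rho$. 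Once this identification is in place, the short exact sequence of the statement follows directly from the first isomorphism theorem applied to $\rho\colon C_*^\Alt(X,\Fbb_p)\to C_*^{\Alt,\rho}(X,\Fbb_p)$.
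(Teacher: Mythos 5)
You follow the same route as the paper's proof: first establish the non\mdash alternating short exact sequence orbit\mdash by\mdash orbit from \cref{lem:fp_short_exact_2}, then restrict to the $\Sigma_k$\mdash alternating subcomplex. Your identification of the kernel of the restricted map as $\big(\bar\rho\cdot C_*(X,\Fbb_p)\cap C_*^{\Alt}(X,\Fbb_p)\big)\oplus C_*^{\Alt}(X^G,\Fbb_p)$ is correct, and you rightly isolate the only genuinely nontrivial point: that $\bar\rho\cdot C_*(X,\Fbb_p)\cap C_*^{\Alt}(X,\Fbb_p)$ equals $\bar\rho\cdot C_*^{\Alt}(X,\Fbb_p)$, i.e.\ that an alternating chain in the image of $\bar\rho$ admits an \emph{alternating} $\bar\rho$\mdash preimage. (This is exactly the point the paper's proof dispatches with the words ``since both actions commute''.) The gap is that your sketched resolution of this step does not close it. Commutativity guarantees that $\Sigma_k$ permutes the free $G$\mdash orbits of simplices, but it does not prevent a permutation $\sigma$ from preserving a free $G$\mdash orbit while acting on it as a nontrivial power of $g$; in that configuration the ``orbit\mdash by\mdash orbit adjustment'' you invoke is impossible, and no averaging is available either, since $|\Sigma_k|$ is generally not invertible in $\Fbb_p$.

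Concretely, take $p=2$ and $i=1$, so $\rho=\bar\rho=\eta=1+g$, and let $X$ consist of two $0$\mdash simplices $P,Q$ with both $g$ and $\tau=(1\;2)\in\Sigma_2$ swapping $P\leftrightarrow Q$ (for instance a pair of complex\mdash conjugate points of some $D^2$ interchanged by the transposition); both actions are simplicially good and commute. Then $C_0^{\Alt}(X,\Fbb_2)=\ZZ(P-Q)\otimes\Fbb_2=\Fbb_2\,(P+Q)$, and $P+Q=\eta P$ lies in $\eta\cdot C_0(X,\Fbb_2)$, yet $\eta\,(P+Q)=0$, so $\eta\cdot C_0^{\Alt}(X,\Fbb_2)=0$ and $P+Q$ has no alternating $\eta$\mdash preimage. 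With $X^G=\varnothing$ the claimed sequence reads $0\to 0\to\Fbb_2\to 0\to 0$, which is not exact. So the identification you flag as the ``main obstacle'' is in fact false at this level of generality: closing the argument requires either an additional hypothesis on the combined $G\times\Sigma_k$\mdash action (ruling out simplices stabilized by mixed elements $(g^j,\sigma)$ with $j\neq 0$, $\sigma\neq\id$), or a reformulation of which complexes appear in the sequence. As written, your proof does not establish the proposition --- and the same objection applies to the one\mdash line justification given in the paper.
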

\begin{proof}
    First we prove that the sequence
    \begin{equation}\label{eq:smith_se_sequence}
    0\rightarrow C_*^{\bar\rho}(X,\Fbb_p)\oplus C_*(X^G,\Fbb_p)\xrightarrow{i} C_*(X,\Fbb_p) \xrightarrow{\rho\cdot} C_*^\rho(X,\Fbb_p) \rightarrow 0 
    \end{equation}
    is exact, with $i$ being the inclusion and $\rho\cdot$ the action induced by $\rho$. 
    Then, if $i|$ and $\rho\cdot|$ are the restrictions to alternating chain complexes,
    $$\begin{aligned}
    \ker(\rho\cdot|) &= \ker(\rho\cdot)\cap C_*^\Alt(X,\Fbb_p) = \im(i)\cap C_*^\Alt(X,\Fbb_p)\\
    &= C_*^{\Alt,\bar\rho}(X,\Fbb_p)\oplus C_*^\Alt(X^G,\Fbb_p) = \im(i|),
    \end{aligned}$$
    since both actions commute. 
    All $i$, $i|$, $\rho\cdot$ and $\rho\cdot|$ are chain maps, the first two because they are just the inclusion, and the third and fourth because $X$ is a simplicial $G$ and $\Sigma_k$-complex, making the boundary commute with both actions.
    
    Now, for each orbit $Y\in X^{(*)}/G$, either $|Y| = 1$ or $|Y| = p$.
    Therefore,
    \begin{align*}
        C_*(X,\Fbb_p) & \cong \bigoplus_{\substack{Y\in X^{(*)}/G \\ |Y| = p}} \Fbb_p[G] \oplus \bigoplus_{\substack{Y\in X^{(*)}/G \\ |Y| = 1}} \Fbb_p, \\
        C_*(X^G,\Fbb_p) &\cong \bigoplus_{\substack{Y\in X^{(*)}/G \\ |Y| = 1}} \Fbb_p,\text{ and} \\
        \rho \cdot C_*(X,\Fbb_p) &\cong \bigoplus_{\substack{Y\in X^{(*)}/G \\ |Y| = p}} \rho \cdot \Fbb_p[G].
    \end{align*}
    The last equivalence is due to the fact that the action of $\rho$ takes simplexes fixed by $G$ to the zero chain.
    Therefore, the sequence in \cref{eq:smith_se_sequence} is exact if, and only if, the following sequence is also exact:
    \begin{equation*}
      0\rightarrow \hspace{-5pt} \bigoplus_{\substack{Y\in X^{(*)}/G \\ |Y| = p}} \hspace{-5pt} \bar\rho\cdot\Fbb_p[G] \oplus \hspace{-5pt}\bigoplus_{\substack{Y\in X^{(*)}/G \\ |Y| = 1}} \hspace{-5pt} \Fbb_p \xrightarrow{i} \hspace{-5pt} \bigoplus_{\substack{Y\in X^{(*)}/G \\ |Y| = p}} \hspace{-5pt} \Fbb_p[G] \oplus \hspace{-5pt} \bigoplus_{\substack{Y\in X^{(*)}/G \\ |Y| = 1}} \hspace{-5pt} \Fbb_p \xrightarrow{\rho\cdot} \hspace{-5pt} \bigoplus_{\substack{Y\in X^{(*)}/G \\ |Y| = p}} \hspace{-5pt} \rho \cdot \Fbb_p[G]\rightarrow 0
      \end{equation*}
    where $\rho\cdot$ acts as polynomial multiplication over $\Fbb_p[G]$ and as the zero morphism over $\Fbb_p$.
    Since, by \cref{lem:fp_short_exact_2} $\ker(\cdot \rho|_{\Fbb_p[G]}) = \bar\rho\cdot \Fbb_p[G]$, the result follows.
\end{proof}

\begin{proof}[Proof of \cref{thm:STtau}]
    This proof is the same as the one by Putman in his notes \cite{PutmanSTnotes}.
    The short exact sequence from \cref{prop:smith_alt_se_sequence} induces a long exact sequence
    \[\cdots\rightarrow AH_{k+1}^\rho(X) \rightarrow AH_k^{\bar\rho}(X) \oplus AH_k(X^G) \rightarrow AH_k(X) \to AH_k^\rho(X) \rightarrow\cdots\]
    where $AH_*^{\rho}(X)$ is the homology of the chain complex $C_*^{\Alt,\rho}(X,\Fbb_p)$ and similarly for $\bar\rho$.
    All homology groups are taken with coefficients in $\Fbb_p$.
    Notice that a similar long exact sequence holds after interchanging $\rho$ and $\bar\rho$.
    
    Define
    $$a_i \coloneqq \dim (AH_i^{\rho}(X)) \text{ and } \bar a_i \coloneqq \dim (AH_i^{\bar\rho}(X)).$$
    Then, for each $k\geq 0$, using both mentioned long exact sequences,
    \begin{align*}
        \dim AH_k(X^G) &\leq a_{k+1} - \bar a_k + \dim AH_k(X) \text{, and}\\
        \dim AH_k(X^G) &\leq \bar a_{k+1} - a_k +  \dim AH_k(X)
    \end{align*}
    Since $X$ is a finite-dimensional simplicial complex, there is a big enough $N$ such that for $k>N$, $a_k = \bar a_k = 0$. 
    Let $n \geq 0$; if $N-n$ is even,
    \begin{align*}
        \sum_{k=n}^N \dim AH_k(X^G)& \leq \sum_{k=n}^N\dim AH_k(X) + (a_{n+1} - \bar a_n) + 
        \cdots + (\bar a_{N+1} - a_N) \\ & =  \sum_{k=n}^N\dim AH_k(X) - \bar a_{n}.
    \end{align*}
    If $N - n$ is odd, the last term in the telescopic sum would be $a_{N+1} - \bar a_N$.
    In both cases, since $a_n,\bar a_n\geq 0$, the result follows.
\end{proof}

\section{Whiskers of stable map-germs}\label{sec:whiskers}

Let $f^\RR:(\RR^n,S')\to(\RR^p,0)$ be an $\Ascr$-finite germ and $f^\CC$ its complexification, $n<p$. Consider their stable perturbations $f^\RR_s$ and $f^\CC_s$. In this section, we discuss the difference between the sets $\im(f^\RR_s)\subseteq\im(f^\CC_s)\cap\RR^p\subseteq\im(f^\CC_s)$. Observe that $\im(f^\CC_s)\cap\RR^p$ is simply the Zariski closure of $\im(f^\RR_s)$.

\begin{definition}\label{def:whiskers}
Let $g$ be a locally stable holomorphic map between the (possibly disjoint) unions of open balls $U\subseteq \CC^n$ and $V\subseteq \CC^p$, $n<p$. Assume that $g$ is the complexification of a real analytic map $g^\RR$, with the respective domain and codomain by open balls. Then, the \textit{whiskers} of $g$ are
\[\Wcal(g) \coloneqq\overline{\im(g)\cap\RR^p\setminus \im(g^\RR)}.\]
\end{definition}

\begin{notation}\label{notation}
Let $g$ be a map as in \cref{def:whiskers}. We fix the following notation:
\begin{enumerate}[label=(\roman*)]
	\item $W^k(g)$ are the points of the multiple point space $D^k(g)$ that project to $\Wcal(g)$;
   \item $g\rvert_\Wcal$ is the restriction of $g$ to $g^{-1}\big(\Wcal(g)\big)$; and
   \item $g\rvert_{\RR^p}$ is the restriction of $g$ to $g^{-1}(V\cap\RR^p)$.
\end{enumerate}
\end{notation}

The name {\it whiskers} was first used in \cite{Marar1996}, where they study map germs as above in the case $(n,p)=(2,3)$. In these dimensions, whiskers are real analytic sets of real dimension 1, hence the name (see also \cref{fig:whiskers}).

\begin{figure}
\centering
\includegraphics[width=.8\linewidth]{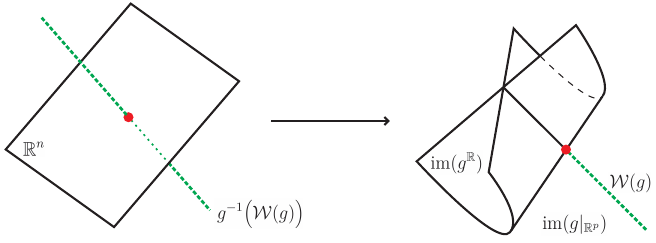}
\caption{Representation of a map $g^\RR$ together with its whiskers $\Wcal(g)$ (green, dashed line) and a non-immersive point (red).}
\label{fig:whiskers}
\end{figure}

\begin{proposition}[cf. {\cite[Lemma 2-1]{Mond1996}}]\label{prop:whisk properties}
In the conditions of \cref{def:whiskers},
\begin{enumerate}[label=(\roman*)]
	\item\label{prop:whisk properties1} $\Wcal(g)$ lies in an analytic subset of complex dimension less than $n$,
   \item\label{prop:whisk properties2} the real dimension of $\Wcal(g)$ is also less than $n$,
   \item\label{prop:whisk properties3} the property $\Wcal(g) \neq \varnothing$ is preserved under real analytic $\Ascr$-equivalence.
\end{enumerate}
\end{proposition}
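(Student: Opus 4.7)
The plan is to exploit the fact that $g$, being the complexification of $g^\RR$, commutes with complex conjugation on source and target; this tightly controls how a point of $\im(g)\cap\RR^p$ can fail to lie in $\im(g^\RR)$.

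For \textbf{(i)}, I would first observe that if $y\in\im(g)\cap\RR^p$ lies outside $\im(g^\RR)$, then for every preimage $x\in g^{-1}(y)$ the conjugate $\bar x$ is also a preimage (since $g(\bar x)=\overline{g(x)}=\bar y=y$), while the hypothesis $y\notin\im(g^\RR)$ forces every preimage to be non-real. Hence such a $y$ has at least two distinct preimages that pair up under conjugation, so $y\in M^2(g)$. Passing to closures, $\Wcal(g)\subseteq M^2(g)$. Since $g$ is locally stable and $n<p$, the multiple point space $D^2(g)$ is either empty or smooth of the expected complex dimension $d_2=2n-p$, and the finite map $g\circ\pi\colon D^2(g)\to M^2(g)$ forces $\dim_\CC M^2(g)\le 2n-p<n$.

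For \textbf{(ii)}, the equations defining $D^2(g)$ and hence $M^2(g)$ come from $g$, which is real, so $M^2(g)$ is a complex analytic set invariant under conjugation, i.e., defined over $\RR$. A standard fact about such sets is that the real dimension of their real locus is bounded above by their complex dimension. Combined with (i), this yields
\[ \dim_\RR\Wcal(g)\le\dim_\RR\bigl(M^2(g)\cap\RR^p\bigr)\le 2n-p<n. \]

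For \textbf{(iii)}, a real analytic $\Ascr$-equivalence between the real germs underlying two such maps is a pair of real analytic diffeomorphisms $(\phi^\RR,\psi^\RR)$ in source and target; their complexifications $\phi,\psi$ are biholomorphisms still commuting with conjugation. The target map $\psi$ sends $\im(g)\cap\RR^p$ bijectively onto the analogous set for the equivalent map and carries $\im(g^\RR)$ bijectively onto the image of the equivalent real germ. Taking closures of complements, $\psi^\RR$ induces a bijection between the respective whiskers, and non-emptiness is preserved.

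The only potentially delicate step is the real-dimension bound $\dim_\RR(X\cap\RR^p)\le\dim_\CC X$ for conjugation-invariant analytic sets used in \textbf{(ii)}, but this is standard once one notes that the germ of the real locus at any smooth real point complexifies inside $X$. Everything else reduces to careful bookkeeping of complex conjugation.
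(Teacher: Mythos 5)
Your proof is correct in substance and rests on the same engine as the paper's: complex conjugation permutes the fibre of $g$ over a real target point, so a point of $\im(g)\cap\RR^p$ outside $\im(g^\RR)$ has only non-real preimages, which come in conjugate pairs. Where you diverge is in part (i): you conclude from the pairing that such a point lies in $M^2(g)$ and bound $\dim_\CC M^2(g)\le 2n-p<n$, whereas the paper concludes that such a point cannot be a \emph{regular} point of the analytic set $\im(g)$ (a regular point has a unique, hence real, preimage) and so places $\Wcal(g)$ in the singular locus of $\im(g)$. Your containment in $M^2(g)$ gives the sharper bound $2n-p$, but the justification you give for it --- that $D^2(g)$ is smooth of the expected dimension for a locally stable map --- is only established in this paper (via the Marar--Mond criterion, \cref{lem: dkgamma}) for corank one, while \cref{def:whiskers} imposes no corank restriction; for higher corank $D^2(g)$ need not be smooth, and you would have to invoke instead transversality of stable maps to the multijet strata (or retreat to the singular-locus argument, which needs no dimension count for multiple point spaces beyond $\dim\Sing(\im g)<\dim\im g=n$). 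Parts (ii) and (iii) are essentially identical to the paper's: the same bound $\dim_\RR(X\cap\RR^p)\le\dim_\CC X$ for conjugation-invariant analytic sets (which the paper does sketch, by comparing ranks of Jacobians at regular real points and inducting on the singular locus --- worth writing out rather than deferring), and the same complexification of the pair $(\phi^\RR,\psi^\RR)$ together with the observation that $\psi$ preserves $\RR^p$ and $\phi$ does not change images, yielding $\Wcal(\psi\circ g\circ\phi)=\psi\big(\Wcal(g)\big)$.
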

\begin{proof}
Observe that $g$ is finite because it is locally stable, see \cite[Proposition 7.1 and Remark 7.1]{Mond2020}. Hence $\im(g)$ is an analytic subset by Remmert finite map theorem (e.g., \cite[Corollary 1.68]{Greuel2007}). Since $g$ is defined by real analytic functions, we can find  real equations for $\im(g)$. 

Now, if $y\in\RR^p$ is a regular point of $\im(g)$, it can only have at most one pre-image by $g$ in $\CC^n$, which must therefore be a real point since conjugation acts on $g^{-1}(\RR^p)$ fiber-wise. This shows that $y\in\im(g^\RR)$ and, hence, $\Wcal(g)=\overline{\im(g)\cap\RR^p\setminus \im(g^\RR)}$ lies in the singular set of $\im(g)$. Since $\im(g)$ has dimension $n$, \cref{prop:whisk properties1} follows.

This also shows \cref{prop:whisk properties2}. Indeed, let $V_\CC$ be a complex variety of complex dimension $d$ that is given by real equations. Then, the real solutions $V_\RR\subseteq V_\CC\cap\RR^p$ have real dimension at most $d$. For non-singular points this is easy to show because the regular part of $V_\RR$ is contained in the regular part of $V_\CC$, since the rank of the Jacobian matrix at points of $V_\RR$ is equal in both cases. For the singular locus one applies an inductive argument.

Finally, to prove \cref{prop:whisk properties3}, let $\phi^\RR\colon(\R^n,S)\to(\R^n,S)$ and $\psi^\RR\colon(\R^p,0)\to(\R^p,0)$ be germs of real analytic diffeomorphisms and denote by $\phi$ and $\psi$ their complexifications.

Since diffeomorphisms in the source do not change the image, we have that $\im\big(\psi^\RR\circ g^\RR\circ \phi^\RR\big) = \psi^\RR\big(\im(g^\RR)\big)$.
Similarly, $\im\big(\psi\circ g \circ \phi\big) = \psi\big(\im(g)\big)$.
Now, since $\psi$ is equal to $\psi^\RR$ when restricted to $\R^p$, we have
\begin{equation*}
\begin{aligned}
\Wcal(\psi\circ g\circ \phi) &= \overline{\im(\psi\circ g\circ \phi)\cap \R^p \setminus \im(\psi^\RR\circ g^\RR\circ\phi^\RR)} \\
& = \overline{\psi\big(\im(g)\big)\cap \R^p \setminus \psi^\RR\big(\im(g^\RR)\big)}\\
& = \overline{\psi\big(\im(g)\cap \R^p\big) \setminus \psi^\RR\big(\im(g^\RR)\big)}\\
& = \psi\big(\Wcal(g)\big),
\end{aligned}
\end{equation*}
which gives the desired result.
\end{proof}

 %

\begin{lemma}\label{lem:embeddingdk}
Let $f:(\CC^n,S)\to(\CC^p,0)$ be an $\Ascr$-finite germ, $n<p\leq 2n$. Assume that $f$ has corank one or $(n,p)$ is in the nice dimensions. Then, $D^k(f)=\varnothing$ for $k\geq2$ if, and only if, $f$ is a germ of an embedding of $\CC^n$ to $\CC^p$. If $p>2n$, $D^k(f)=\varnothing$ for $k\geq2$ if, and only if, $S=\left\{*\right\}$.
\end{lemma}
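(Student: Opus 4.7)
If $D^k(f)\neq\varnothing$ for some $k\geq 2$, then any tuple in $D^k(f)$ is the limit of tuples with all coordinates distinct; projecting such a tuple onto two of its coordinates places the resulting pair in $D^2(f)$, so $D^2(f)\neq\varnothing$ as well. Hence $D^k(f)=\varnothing$ for every $k\geq 2$ is equivalent to $D^2(f)=\varnothing$, and I focus on this case.

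If $|S|\geq 2$, picking distinct $s_1,s_2\in S$ yields $(s_1,s_2)$ in the open stratum of $D^2(f)$ since $f(s_1)=f(s_2)=0$, so $D^2(f)\neq\varnothing$. Thus $D^2(f)=\varnothing$ forces $|S|=1$, say $S=\{0\}$. Conversely, an embedding germ has $|S|=1$ and is injective, so $D^2(f)=\varnothing$ is immediate.

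For the remaining direction with $n<p\leq 2n$, I must upgrade injectivity to immersiveness at $0$, so that the inverse function theorem identifies $f$ with the germ of an embedding. Suppose for contradiction $f$ is not immersive at $0$. In the corank-one case, \cref{rem: eqs Dk} realises $D^2(f)\subseteq(\CC^{n+1},0)$ as the zero scheme of the $p-n+1$ divided differences $\Delta_j(x',t_1,t_2)=\big(f_j(x',t_1)-f_j(x',t_2)\big)/(t_1-t_2)$; on the diagonal $t_1=t_2$ these specialise to the partial derivatives $\partial f_j/\partial x_n$, all of which vanish at $0$ by non-immersiveness. Hence the origin of $(\CC^{n+1},0)$ lies in $D^2(f)$, contradicting emptiness. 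In the nice-dimensions case, pass to a stable unfolding $F$ of $f$; non-immersiveness of $f$ at $0$ lifts to non-immersiveness of $F$ at $(0,0)$, and Mather's classification of stable germs forces such an $F$ to carry a non-trivial analytic germ of double points approaching the diagonal at $(0,0,0,0)$, so $(0,0,0,0)\in D^2(F)$ and the intersection $D^2(f)=D^2(F)\cap\{u=0\}$ yields $(0,0)\in D^2(f)$—again a contradiction.

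The $p>2n$ statement follows by the same blueprint: \cref{lem: dkgamma}\eqref{iiPD}(b) in corank one (or the parallel dimension count in the nice dimensions) confines $D^2(f)$ to lie inside $S^2$, and combining this constraint with the non-immersive implication above and the $\Ascr$-finiteness hypothesis rules out a non-trivial zero-dimensional contribution, leaving $|S|=1$ equivalent to $D^2(f)=\varnothing$. The principal technical difficulty is the non-immersive step in the nice-dimensions case, where one must translate the scheme-theoretic definition of $D^2$ through a stable unfolding into a concrete germ of double points at the origin by appealing to Mather's local normal forms for stable germs.
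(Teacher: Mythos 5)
Your plan is correct in outline and, on the easy parts (reduction to $k=2$, the multigerm observation, the converse), matches the paper. The interesting divergence is in the main implication for $n<p\leq 2n$. The paper's proof passes to a stabilisation $F=(f_s,s)$ (this is exactly where the hypothesis ``corank one or nice dimensions'' is consumed, to guarantee such a stabilisation exists), notes that emptiness of $D^k$ transfers to the stable member, and then quotes the statement for stable germs (Mond--Nu\~no-Ballesteros, Proposition 9.8; alternatively Damon's identity between $\delta(f)$ and the maximal multiplicity, read off Mather's classification). Your corank-one branch is genuinely different and more elementary: reading the non-immersive point directly off the divided-difference equations of \cref{rem: eqs Dk}, so that corank exactly one forces the origin into $D^2(f)$, is correct, self-contained, and avoids the stable classification altogether in that case. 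Your nice-dimensions branch, by contrast, ends up at the same black box as the paper — ``a stable non-immersive germ has the origin in the closure of its double points'' — which you assert via Mather's classification where the paper cites it; that is an acceptable appeal to a known result, not a gap, and the lift of non-immersiveness to the stable unfolding is handled correctly.

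The $p>2n$ clause, however, is not proved by your sketch, and the sketch is internally inconsistent there. Your divided-difference computation nowhere uses $p\leq 2n$: it shows that \emph{any} non-immersive corank-one monogerm has the origin in the zero set of the divided differences, hence in $D^2(f)$. Applied to, say, $t\mapsto(t^3,t^4,t^5)$ (an $\Ascr$-finite non-immersive monogerm with $p=3>2=2n$), this places the origin in $D^2(f)$, which is the opposite of what you then claim to ``rule out'' by combining \cref{lem: dkgamma}\eqref{iiPD}(b) with ``the non-immersive implication above.'' So either the $p>2n$ case requires a different reading of the (zero-dimensional, diagonal-supported) scheme $D^2(f)\subseteq S^2$ than the one your corank-one argument uses, or it requires a separate argument; ``rules out a non-trivial zero-dimensional contribution'' is a gesture, not a proof, and as written it contradicts your own main step. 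You should isolate the $p>2n$ clause and treat it on its own terms rather than as a corollary of the $p\leq 2n$ mechanism.
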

\begin{proof}
The converse implication is obvious. For the direct implication necessarily $S=\left\{0\right\}$. If $f$ is unstable and we take a stabilisation, $F=(f_s,s)$, we provide a deformation of the multiple points of $f$, i.e., $D^k(F)$ is a deformation family of $D^k(f)$. Since emptiness would be preserved by deformations we can assume that $f$ is stable. For stable germs, this is precisely the statement of \cite[Proposition 9.8]{Mond2020}. Alternatively, one can show the result using an equality between the dimension of the local algebra of $f$, $\delta(f)$, and the maximal multiplicity of $f$ proven in \cite{Damon1976} (the statement works for stable germs by inspecting the classification in \cite{Mather1971}, cf. \cite[Theorem 9.1]{Mond2020}), since in our case $\delta(f)$ must be one and then $f$ must have maximal rank.
\end{proof}

\begin{lemma}[cf. {\cite[Proposition 9.5]{Mond2020}}]\label{cor:whisk diagonal}
Let $f$ be a multigerm as in \cref{lem:embeddingdk} and assume additionally that it is the complexification of a real germ $f^\RR$. If $f$ is $\Ascr$-equivalent to several smooth branches meeting in general position then $\Wcal(f)=\varnothing$. More generally, for any stable $f$ there is some $\sigma\neq\id$ so that (including empty intersections)
\[W^k(f)\cap D^k(f^\RR)\subseteq D^k(f)^\sigma, \text{ for all } \, k\geq 2.\]
\end{lemma}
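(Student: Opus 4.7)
The plan for the first part is direct. By \cref{prop:whisk properties3} I may replace $f$ by a specific representative that is a multigerm of smooth branches $f_i$ meeting in general position. Each branch is an embedding whose image is a smooth analytic submanifold cut out by real equations, so at any regular real point of $\im(f_i)$ the unique preimage through $f_i$ is forced to be real (otherwise the preimage and its complex conjugate would be two distinct points in a single-point fiber). The same reasoning applies at transverse intersections of the branches: every branch passing through a real intersection point still contributes a real preimage through its own parametrisation. Hence every real point of $\im(f)$ lies in $\im(f^\RR)$, so $\Wcal(f)=\varnothing$.

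For the second part, fix $k\geq 2$. The empty-intersection case is vacuous, so pick $p=(x_1,\ldots,x_k)\in W^k(f)\cap D^k(f^\RR)$. The common image $y=f(x_1)=\cdots=f(x_k)$ is a real point of $\Wcal(f)=\overline{\im(f)\cap\RR^p\setminus\im(f^\RR)}$, hence the limit of a sequence $y_n$ of genuine whisker points. Finiteness of $f$ lets me lift each $y_n$ to a tuple $p_n=(x_1^{(n)},\ldots,x_k^{(n)})\in D^k(f)$ converging to $p$, which is legitimate even when $p$ has repeated coordinates because $D^k(f)$ is defined as a closure.

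The key observation is that $y_n\notin\im(f^\RR)$ means $f^{-1}(y_n)$ contains no real points, so complex conjugation—which preserves the fibers of $f$ since $f$ is defined over $\RR$—acts freely on $f^{-1}(y_n)$. After a convenient relabeling, this lifts to a fixed-point-free involution $\tau_n\in\Sigma_k$ with $x_{\tau_n(i)}^{(n)}=\overline{x_i^{(n)}}$. Since $\Sigma_k$ is finite, I may pass to a subsequence along which $\tau_n\equiv\tau$ is constant. Taking $n\to\infty$ and using that each $x_i$ is real yields $x_{\tau(i)}=\overline{x_i}=x_i$, so $p\in D^k(f)^\tau$ with $\tau\neq\id$.

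The main obstacle I foresee is upgrading this pointwise conclusion—namely that $p$ is fixed by \emph{some} nontrivial $\sigma_p$ depending on $p$—to the uniform statement that a single $\sigma$ works for every $p$ in $W^k(f)\cap D^k(f^\RR)$. For this I would exploit the rigidity of stable multigerms together with the description of the $\Sigma_k$-stabilizers of points in $D^k(f)$ coming from \cref{lem: dkgamma} (in corank one) and Mather's classification (in nice dimensions) to show that the involutions arising from the construction above all lie in a single conjugacy class of fixed-point-free involutions. A connectedness argument on $W^k(f)\cap D^k(f^\RR)$ in the germ, combined with the local constancy of $\tau$ along the chosen lifts, should then allow me to consolidate them into a single $\sigma$.
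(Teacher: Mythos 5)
Your argument for the first statement is fine and is essentially the paper's: the paper normalizes the branches to hyperplanes by a real $\Ascr$-equivalence (using \cref{prop:whisk properties}) and notes that the real points of a complexified real hyperplane are exactly the image of the real branch; your single-point-fiber conjugation argument says the same thing.

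The second statement is where the proposal breaks down, and your route is not the paper's. The central step --- ``after a convenient relabeling, this lifts to a fixed-point-free involution $\tau_n\in\Sigma_k$ with $x^{(n)}_{\tau_n(i)}=\overline{x^{(n)}_i}$'' --- cannot work as stated: for odd $k$ (e.g.\ $k=3$) the group $\Sigma_k$ contains no fixed-point-free involution at all, and even for even $k$ a $k$-tuple of pairwise distinct points chosen inside a conjugation-invariant fibre need not be closed under conjugation (take $k=2$ and a fibre $\{z,\bar z,w,\bar w\}$ with the tuple $(z,w)$). What your mechanism genuinely produces, when it applies, is weaker and different: if two slots of $p_n$ happen to be occupied by a conjugate pair $(z_n,\bar z_n)$, then both converge to the same real point and $p$ is fixed by that \emph{transposition}. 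But you would first have to prove that the approximating tuples can be chosen to contain a conjugate pair, which is essentially the statement you are trying to prove. The preceding lifting step is also not justified: $p\in W^k(f)$ only says that its image lies in the \emph{closure} $\Wcal(f)$, while the closure defining $D^k(f)$ runs over all nearby fibres, not over fibres of genuine whisker points; ``finiteness of $f$'' does not give a sequence $p_n\in D^k(f)$ over the $y_n$ converging to a prescribed $p$ (the projection $D^k(f)\to M^k(f)$ is not open, and a coordinate of $p$ may sit on a branch whose image misses the $y_n$ entirely). Finally, the uniformity step you flag as the ``main obstacle'' is indeed unresolved, and the proposed fix does not work as sketched: conjugate permutations have translated fixed loci, $D^k(f)^{\tau\sigma\tau^{-1}}=\tau D^k(f)^\sigma$, so knowing all your involutions lie in one conjugacy class does not yield a single $\sigma$.

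For contrast, the paper never takes limits over whisker fibres. It argues from the local structure of stable multigerms: the whisker of a multigerm is the union of the whiskers of its branches; by the first statement, points of $\Wcal(f)\cap\im(f^\RR)$ lie over multigerms that are neither embeddings nor normal crossings of embeddings; and any non-embedding monogerm $m$ satisfies $(0;0)\in D^2(m)^{(1\,2)}$ (via \cref{lem:embeddingdk}), which is what places $W^k(f)\cap D^k(f^\RR)$ inside the fixed locus of a nontrivial permutation. If you want to salvage your approach, you would need to replace the fixed-point-free involution by the correct combinatorial statement about which conjugate pairs occur in the tuple, and justify the approximation of $p$ by tuples over genuine whisker points.
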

\begin{proof}
We can work with $\Ascr$-classes by \cref{prop:whisk properties}.
For the first statement, we assume that each branch in the image is a hyperplane, after taking an $\Ascr$-equivalence if necessary. Since each hyperplane intersected with $\RR^p$ is a real hyperplane of dimension $n$, $\im(f)\cap\RR^p=\im(f^\RR)$ and $\Wcal(f)=\varnothing$.

For the second statement, observe that any ($\Ascr$-finite) monogerm $m:(\CC^n,0)\to(\CC^p,0)$ is such that $D^2(m)=\varnothing$ or $(0;0)\in D^2(m)^{(1\:2)}\neq\varnothing$. That $D^2(m)=\varnothing$ only happens for embeddings is shown in \cref{lem:embeddingdk}, which have empty whiskers.
 Hence, if $D^k(m)\neq\varnothing$ for some $k\geq2$, also $D^k(m)^\sigma\neq\varnothing$ for some $\sigma\neq\id$. Therefore, since any whisker of a multigerm is the union of whiskers of each branch, the lemma follows by noting that the intersection $\Wcal(f)\cap\im(f^\RR)$ is included in those points that are not embeddings nor normal crossings of embeddings (by the first statement), so they must have fixed points by some $\sigma\neq\id$ (cf. \cref{fig:whiskers}).
\end{proof}


The following threorem is a central piece in this work, although it is technical (see \cref{fig:grps1h2}).

\begin{theorem}\label{thm: icss splits}
Let $g$, $g^\RR$, $g\rvert_\Wcal$ and $g\rvert_{\RR^p}$ be as in \cref{def:whiskers,notation}. Then, for the different ICSS, we have that
\begin{enumerate}[label=(\roman*)]
	\item\label{it:icss splits1}  $E^1_{p,q}(g\rvert_{\RR^p};G)\cong E^1_{p,q}(g^\RR;G)\oplus E^1_{p,q}(g\rvert_{\Wcal};G),\quad \forall p\neq0$, and
   \item\label{it:icss splits2} $E^1_{p,q}(g^\RR;G)\hookrightarrow E^1_{p,q}(g\rvert_{\RR^p};G).$
\end{enumerate}
\end{theorem}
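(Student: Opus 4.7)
My plan is to prove (i) by establishing a Mayer--Vietoris sequence at the level of alternating chain complexes on the multiple point spaces, exploiting that the intersection of the two pieces sits inside the big diagonal and therefore has vanishing alternating homology. Claim (ii) will then follow as a summand inclusion. Throughout I write $k=p+1\geq 2$, which is where the ICSS index lives and where the $\Sigma_k$-action used below becomes non-trivial.

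The first step is to establish the set-theoretic equality
\[D^k(g|_{\RR^p})=D^k(g^\RR)\cup W^k(g).\]
The inclusion $\supseteq$ is immediate from the definitions: real tuples project to $\im(g^\RR)\subseteq V\cap\RR^p$ and preimages of whiskers project to $\Wcal(g)\subseteq V\cap\RR^p$. For the reverse inclusion, a tuple $\mathbf{x}=(x_1,\ldots,x_k)\in D^k(g|_{\RR^p})$ projects to some $y\in V\cap\RR^p=\im(g^\RR)\cup\Wcal(g)$. If $y\in\Wcal(g)$ then $\mathbf{x}\in W^k(g)$ by definition, and if $y\in\im(g^\RR)\setminus\Wcal(g)$ I would argue that all the $x_i$ are forced to be real. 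Indeed, $g$ being a complexification makes $\tau(\mathbf{x})\coloneqq(\bar x_1,\ldots,\bar x_k)$ also a preimage tuple of $y$, and if some $x_i$ were non-real, the conjugate-pair image branches through $x_i$ and $\bar x_i$ would produce real points of $\im(g)\setminus\im(g^\RR)$ arbitrarily close to $y$, placing $y$ in the closure $\Wcal(g)$ and yielding a contradiction.

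Next, \cref{cor:whisk diagonal} says that the intersection $D^k(g^\RR)\cap W^k(g)$ is contained in $D^k(g)^\sigma$ for some $\sigma\neq\id$; since every non-identity permutation has a non-trivial cycle, this fixed locus is further contained in $D^k(g)^\tau$ for some transposition $\tau$. An alternating chain $c$ supported on such a transposition-fixed set must satisfy both $\tau\cdot c=c$ (as $\tau$ acts trivially on its fixed set) and $\tau\cdot c=-c$ (by the alternating property), hence $2c=0$; this already forces $AH_*\big(D^k(g^\RR)\cap W^k(g);G\big)=0$ whenever $G$ has no $2$-torsion, and in general I would appeal to the equivariant Smith-theoretic framework developed in \cref{sec:equivariant_st}. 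With this vanishing, the Mayer--Vietoris long exact sequence on alternating chains for the cover $D^k(g|_{\RR^p})=D^k(g^\RR)\cup W^k(g)$ collapses to
\[AH_q\big(D^k(g|_{\RR^p});G\big)\cong AH_q\big(D^k(g^\RR);G\big)\oplus AH_q\big(W^k(g);G\big),\]
which is exactly (i). Statement (ii) is then immediate, the injection being induced by the inclusion of alternating chain complexes $C_*^\Alt\big(D^k(g^\RR)\big)\hookrightarrow C_*^\Alt\big(D^k(g|_{\RR^p})\big)$ landing in the first summand.

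The main obstacle will be making the set-theoretic decomposition above fully rigorous. The delicate case is a potential "mixed" tuple whose coordinates are partly real and partly non-real but whose image lies in $\im(g^\RR)\setminus\Wcal(g)$; ruling it out demands a careful local analysis of how the $\tau$-invariant complex branches of $\im(g)$ meet $\RR^p$ near the target, and one should use local stability of $g$ to put the relevant branches into controlled normal form. A secondary technicality is handling coefficient groups $G$ with $2$-torsion in the vanishing statement for the intersection, where the naive alternating argument breaks down and one must invoke the Smith-theoretic machinery of \cref{sec:equivariant_st}.
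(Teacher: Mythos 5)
Your argument follows the paper's own proof in all essentials: both decompose $D^{p+1}(g\rvert_{\RR^p})$ into $D^{p+1}(g^\RR)\cup W^{p+1}(g)$ and use \cref{cor:whisk diagonal} together with the fact that simplices in the big diagonal support no alternating chains to split off the two pieces, your Mayer--Vietoris packaging being just a homology-level restatement of the paper's direct chain-level splitting $C^\Alt_*\big(D^k(g\rvert_{\RR^p})\big)\cong C^\Alt_*\big(D^k(g^\RR)\big)\oplus C^\Alt_*\big(D^k(g\rvert_{\Wcal})\big)$, and with item \ref{it:icss splits2} obtained the same way. Your worry about coefficient groups with $2$-torsion is moot: $C^\Alt_*(\cdot\,;G)$ is by definition $C^\Alt_*(\cdot\,;\ZZ)\otimes G$, and the vanishing on the diagonal holds integrally (this is precisely the cited result of Houston), so no Smith-theoretic input is required.
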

\begin{proof}
See \cref{sec: ICSS} for definitions. To show \cref{it:icss splits1}, since
\[E^1_{p,q}(\bullet;G)\coloneqq AH_q\big( D^{p+1}(\bullet);G \big)=H_q\big( C^\alt\big(D^{p+1}(\bullet);G\big), \partial|\big),\]
it is enough to show the result at the chain level (for $\ZZ$ coefficients, $k>1$):
\[ C^\alt_* \big(D^k(g\rvert_{\RR^p}) \big)\cong C^\alt_* \big(D^k(g^\RR )\big)\oplus C^\alt_* \big(D^k(g\rvert_{\Wcal}) \big).\]
Indeed, this holds since no cell lying in the diagonal of $\CC^n\times\dots\times\CC^n$ (i.e., the points fixed by some non-trivial permutation) is part of an alternating chain. This is shown in \cite[Theorem 2.12, Item (i)]{Houston2007}. Hence, after \cref{cor:whisk diagonal}, any alternating chain can be uniquely written as a sum of two alternating chains, one in $C^\alt (g^\RR )$ and the other in $C^\alt (g\rvert_{\Wcal} )$.

\cref{it:icss splits2} follows from \cref{it:icss splits1}, after observing that $E^1_{0,q}(g^\RR;G)\cong E^1_{0,q}(g\rvert_{\RR^p};G)$ for any $q$.
\end{proof}

\begin{figure}
	\centering
  \includegraphics[width=1\textwidth]{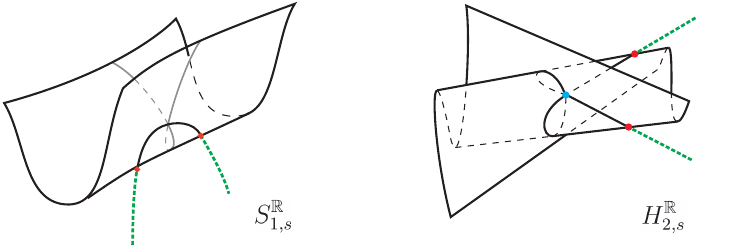}
\caption{Good real perturbations of the $S_1$ (left) and $H_2$ (right) singularities in Mond's classification of map germs from $\CC^2$ to $\CC^3$ \cite{Mond1985}, $S_{1,s}^\RR(x,y)=\big(x,y^2, y^3+y(x^2-s)\big)$ and $H^\RR_{2,s}(x,y)=\big(x,y^3-sy,xy+y^2(y^3-sy)\big)$.
}
\label{fig:grps1h2}
\end{figure}

The following corollary deals only with the free part of the homology. Its strengthen version to the homotopy type is given later in \cref{sec:homotopy}. However, we need this result previously. Furthermore, we deal with excellent real deformations in \cref{sec:otherdeformations}.

\begin{corollary}\label{cor:GRPisCGRP}
Let $f$ be an $\Ascr$-finite multigerm as in \cref{lem:embeddingdk}, then any good real perturbation of $f$ is also a complete good real perturbation. In particular, if $f_s^\RR$ is a good real perturbation of $f$,
\[ \chi_{Top}\big(\im(f_s)\big)=\chi_{Top}\big(\im(f_s^\RR)\big).\]
\end{corollary}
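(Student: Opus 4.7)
The plan is to combine the equivariant Smith inequality (\cref{thm:STtau}) applied to each multiple point space with Houston's concentration theorem for $AH_*$ and the ICSS (\cref{thm: general icss}). Complex conjugation acts coordinate-wise on $(\CC^n)^k$, preserves $D^k(f_s)$, commutes with the $\Sigma_k$-action and---since $f_s$ is the complexification of $f_s^\RR$---has fixed set exactly $D^k(f_s^\RR)$. \cref{thm:STtau} then gives, for every $N\geq 0$,
\[\sum_{i\geq N} A\beta_i\bigl(D^k(f_s);\Fbb_2\bigr) \;\geq\; \sum_{i\geq N} A\beta_i\bigl(D^k(f_s^\RR);\Fbb_2\bigr).\]

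By Houston's theorem (recalled in \cref{sec:complexdef}) together with \cref{rem:HaltToAH} and the universal coefficient theorem, $AH_*(D^k(f_s);\Fbb_2)$ is concentrated in the single degree $c_k\coloneqq\dim_\CC D^k(f_s)$. Choosing $N=c_k+1$ kills the right-hand side above degree $c_k$, while $N=0$ yields $\sum_i A\beta_i(D^k(f_s^\RR);\Fbb_2) \leq A\beta_{c_k}(D^k(f_s);\Fbb_2)$. Feeding this into the ICSS over $\Fbb_2$: for $f_s$ the $E^1$-page sits on the single line $q=c_{r+1}$, and since $c_{r+1}<c_r$ whenever $n<p$, a quick bidegree check shows that every page differential has trivial target; thus $E^1=E^\infty$, $H_*(\im f_s;\Fbb_2)$ is torsion-free in the induced filtration, and $\beta_i(\im f_s;\Fbb_2)=\beta_i(\im f_s;\QQ)$. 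For $f_s^\RR$, the standard spectral sequence bound combined with the estimate just obtained gives
\[\sum_i \beta_i\bigl(\im f_s^\RR;\Fbb_2\bigr) \;\leq\; \sum_{r,q} E^1_{r,q}(f_s^\RR;\Fbb_2) \;\leq\; \sum_i \beta_i\bigl(\im f_s;\QQ\bigr).\]

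Together with the universal coefficient inequality $\beta_i(\im f_s^\RR;\QQ)\leq\beta_i(\im f_s^\RR;\Fbb_2)$ and the good real perturbation hypothesis, this produces the telescoping chain
\[\sum_i \beta_i\bigl(\im f_s;\QQ\bigr) \;=\; \!\!\!\sum_{i:\,\beta_i(\im f_s)\neq 0}\!\!\! \beta_i\bigl(\im f_s^\RR;\QQ\bigr) \;\leq\; \sum_i \beta_i\bigl(\im f_s^\RR;\QQ\bigr) \;\leq\; \sum_i \beta_i\bigl(\im f_s^\RR;\Fbb_2\bigr) \;\leq\; \sum_i \beta_i\bigl(\im f_s;\QQ\bigr),\]
forcing equalities throughout. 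In particular $\beta_i(\im f_s^\RR;\QQ)=0$ whenever $\beta_i(\im f_s;\QQ)=0$, so combined with the hypothesis $f_s^\RR$ is a complete good real perturbation, and the Euler characteristic identity is immediate from term-wise equality of Betti numbers.

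The chief obstacle I expect is the passage between $\Fbb_2$- and $\QQ$-coefficients: equivariant Smith theory only controls mod-$2$ Betti numbers, whereas the hypothesis and the conclusion are phrased rationally. The gap is closed by Houston's concentration theorem, which forces torsion-free homology on the complex side, and by the universal coefficient inequality on the real side; both facts are essential for the telescoping argument to collapse to an equality.
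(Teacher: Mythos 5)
Your proof is correct in substance but takes a genuinely different route from the paper. The paper applies ordinary Smith theory (Floyd, \cref{thm:ST_ineq}) directly to $\im(f_s)$ with the conjugation action, whose fixed set is $\im(f_s|_{\RR^p})$, and then bridges the remaining gap between $\im(f_s|_{\RR^p})$ and $\im(f_s^\RR)$ --- the whiskers --- via the chain-level splitting of the ICSS (\cref{thm: icss splits}); the good-real-perturbation hypothesis then closes a chain of three inequalities. You instead apply the equivariant Smith inequality \cref{thm:STtau} to each $D^k(f_s)$, where coordinate-wise conjugation has fixed set exactly $D^k(f_s^\RR)$ (whisker points of $D^k$ are non-real tuples, so they never appear), and push the bound through both spectral sequences using Houston's concentration theorem and degeneration of the complex ICSS at $E^1$. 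This sidesteps \cref{sec:whiskers} entirely and is essentially a lighter version of the argument the paper deploys later for \cref{prop:draftXIII}, at the cost of heavier input. Two caveats, neither fatal to the telescoping chain. First, the paper only justifies the simplicial $\Sigma_k$-structure required by \cref{thm:STtau}, and only states Houston's concentration theorem, for corank-one monogerms, whereas the corollary covers multigerms possibly of corank $\geq 2$ in the nice dimensions; the paper's proof, using Floyd's theorem on the image itself, avoids this restriction, so for full generality you would need to verify the triangulation hypothesis and the concentration of $AH_*\big(D^k(f_s)\big)$ in that setting. Second, your bidegree check that $E^1=E^\infty$ for $f_s$ only rules out differentials between columns $r,r'\geq 1$: the column $r=0$ carries $H_*$ of the contractible source, concentrated in degree $0$ rather than $c_1=n$, so the differentials landing in $E^*_{0,0}$ require the separate (standard) observation that alternating classes map to zero under the projection to $H_0$ of a connected source.
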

\begin{proof}
By left-right equivalence, we can consider that we work with the germ $f=f^\CC$, the complexification of $f^\RR$. Observe that $\im(f_s^\CC\rvert_{\RR^p})$ is the fixed point-set of the action of $\nicefrac{\ZZ}{2\ZZ}$ by conjugation (see \cref{notation}). Hence, we have the following chain of inequalities:
\begin{equation}\label{eq:chainbettis}
\begin{aligned}
\sum_i \beta_i\big(\im(f^\CC_s);\nicefrac{\ZZ}{2\ZZ}\big)&\geq \sum_i \beta_i\big(\im(f^\CC_s\rvert_{\RR^p});\nicefrac{\ZZ}{2\ZZ}\big)\\
&\geq\sum_i \beta_i\big(\im(f^\RR_s);\nicefrac{\ZZ}{2\ZZ}\big)\\
&\geq\sum_i \beta_i\big(\im(f^\CC_s);\nicefrac{\ZZ}{2\ZZ}\big).
\end{aligned}
\end{equation}
The first inequality follows by Smith theory; the second one is a consequence of the fact that the ICSS $E^1_{p,q}(f_s^\RR)$ is a direct summand of $E^1_{p,q}(f_s^\CC\rvert_{\RR^p})$ (by \cref{thm: icss splits}); and the last one follows from the fact that $f_S^\RR$ is a good real perturbation of $f^\CC$. Therefore, \cref{eq:chainbettis} must hold with equalities, showing the result.
%
%
\end{proof}

\section{Topology of multiple point spaces}\label{sec: Pruebas}

In this section we analyse the topology of the multiple point spaces $D^k$ of a good real deformation (recall \cref{sec: ICSS}). In order to do this, we need an extra ingredient that relates the fix point spaces by permutations $\sigma\in\Sigma_k$, see \cref{def:dksigma}, with the total space.

It is shown in \cite[Definition 2.8 and Proposition 3.4]{Houston2007} that the action of $\Sigma_k$ in our spaces $D^k$ is simplicial and if it fixes a simplex it is point-wise fixed. This is called a \textit{simplicially good action} and it is what is needed to apply the following.

\begin{lemma}[see {\cite[Proposition 2.4]{GimenezConejero2022c}}]\label{lem:formulachitau}
For an irreducible representation $\tau$ of a simplicially good action of $G$ in a simplicial complex $M$,
\[\chi_{\tau}(M)=\frac{1}{|G|}\sum_{\sigma\in G}\overline{\chi_\tau(\sigma)} \chi_{Top}(M^\sigma).\]
\end{lemma}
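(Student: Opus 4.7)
The plan is to deduce the identity as a chain-level Lefschetz-type trace computation combined with the orthogonality relations for characters of $G$. First, I would fix the convention that
\[\chi_\tau(M)=\sum_i(-1)^i\dim_\CC\mathrm{Hom}_G\bigl(\tau,C_i(M;\CC)\bigr),\]
so that $\chi_\tau(M)$ is the Euler characteristic of the $\tau$-isotypic component of the cellular chain complex of $M$. Since the $G$-action commutes with the boundary, the isotypic decomposition is compatible with it, and standard Euler characteristic invariance lets one replace $C_i$ by $H_i$ in this definition.

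The key geometric input is the simplicially good action hypothesis: any $\sigma\in G$ that sends a simplex of $M$ to itself must fix it pointwise. Hence the basis of $C_i(M^\sigma;\CC)$ is precisely the subset of the oriented-simplex basis of $C_i(M;\CC)$ fixed by $\sigma$, and the permutation matrix of $\sigma$ on $C_i(M;\CC)$ contributes $+1$ for each such fixed simplex (orientation is preserved by pointwise fixation, so no sign cancellations appear). Consequently
\[\tr\bigl(\sigma\mid C_i(M;\CC)\bigr)=\dim_\CC C_i(M^\sigma;\CC),\]
and taking alternating sums yields the chain-level Lefschetz identity $\chi_{Top}(M^\sigma)=\sum_i(-1)^i\tr\bigl(\sigma\mid C_i(M;\CC)\bigr)$.

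Substituting this into the right-hand side of the lemma and interchanging the two finite sums gives
\[\frac{1}{|G|}\sum_{\sigma\in G}\overline{\chi_\tau(\sigma)}\,\chi_{Top}(M^\sigma)=\sum_i(-1)^i\cdot\frac{1}{|G|}\sum_{\sigma\in G}\overline{\chi_\tau(\sigma)}\,\tr\bigl(\sigma\mid C_i(M;\CC)\bigr).\]
By the standard orthogonality relations, the inner sum is the character-theoretic inner product $\langle\chi_\tau,\chi_{C_i(M;\CC)}\rangle_G$, which equals the multiplicity of $\tau$ in the $G$-module $C_i(M;\CC)$, namely $\dim_\CC\mathrm{Hom}_G(\tau,C_i(M;\CC))$. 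Taking the alternating sum in $i$ recovers $\chi_\tau(M)$ by the convention set up above.

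The only point that requires care is finiteness: all sums must be absolutely convergent and the chain-level Euler characteristic must agree with the homology-level one. This holds automatically when $M$ has finitely many simplices in each dimension and vanishing chain groups in high enough degree, which is the case for the multiple point spaces $D^k(f)$ used later in the paper (they are compact for a small enough representative, and the simplicially good $\Sigma_k$-action is already recorded in the text). Beyond this routine bookkeeping, I do not expect any genuine obstacle.
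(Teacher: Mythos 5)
Your argument is correct and is the standard proof of this identity; the paper itself does not prove the lemma but imports it from \cite[Proposition 2.4]{GimenezConejero2022c}, where the argument is essentially the one you give (trace of $\sigma$ on the simplicial chain groups counts the simplices of $M^\sigma$ thanks to the simplicially good hypothesis, then character orthogonality and exactness of the isotypic functor over $\CC$). The only convention point worth flagging is that your $\frac{1}{|G|}$ normalization computes multiplicities rather than isotypic dimensions, which matches the stated formula and is in any case immaterial for the paper's only use, the one-dimensional representation $\tau=\Alt$ where $\chi_\Alt(\bullet)=\sum_i(-1)^iA\beta_i(\bullet)$.
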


The notation of the lemma is clarified in \cite[Definition 2.3]{GimenezConejero2022c}. We only need to know that $\chi_{Top}$ is the usual topological characteristic and that we will use the lemma for $\tau=\Alt$, so $\chi_\tau(\sigma)=\sgn(\sigma)$ and 
\[\chi_\tau(\bullet)=\chi_\Alt(\bullet)\coloneqq\sum_i (-1)^{i}A\beta_i(\bullet).\]
Here $A\beta_i(\bullet) = \rk AH _i (\bullet)$.
The remaining of the section is a back and forth of arguments with the usual homology and the alternating homology, by means of the formula of \cref{lem:formulachitau}. We fix the notation $f_s^\RR$ for a good real deformation of an $\Ascr$-finite monogerm $f:(\CC^n,0)\to(\CC^p,0)$ of corank one, $n<p$. We keep using \cref{notation} as well. 

\begin{note}
Observe, however, that although the corank 1 hypothesis is used in \cref{prop:draftVII} to compute the Betti numbers of the multiple point spaces, it actually is not used in the proof of \cref{prop:draftXIII}.
\end{note}

\begin{proposition}\label{prop:draftVII}
For all $k\geq2$ and any element $\sigma\in\Sigma_k$,
\[ (-1)^{d_k^\sigma}\chi_{Top}\big(D^k(f_s)^\sigma\big)\geq(-1)^{d_k^\sigma}\chi_{Top}\big(D^k(f_s^\RR)^\sigma\big).\]
\end{proposition}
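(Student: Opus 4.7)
The plan is to apply classical Smith theory (\cref{thm:ST_ineq} with $p=2$) to the componentwise complex conjugation on $D^k(f_s)^\sigma$: this action on $D^k(f_s)$ commutes with the $\Sigma_k$-action, so it restricts to an involution on $D^k(f_s)^\sigma$ whose fixed-point set is precisely $D^k(f_s^\RR)^\sigma$. The key structural input is the Marar--Mond criterion (\cref{lem: dkgamma}) together with \cref{rem:propertiesDk}: under the corank one hypothesis, $D^k(f)^\sigma$ is empty or an ICIS of dimension $d\coloneqq d_k^\sigma$, and $D^k(f_s)^\sigma$ is its Milnor fibre. Hence, whenever nonempty, $D^k(f_s)^\sigma$ will be homotopy equivalent to a bouquet of $\mu_\sigma\coloneqq \mu\big(D^k(f)^\sigma\big)$ spheres of dimension $d$, which I will use to compute
\[
\chi_{Top}\big(D^k(f_s)^\sigma\big) = 1 + (-1)^d \mu_\sigma.
\]

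After disposing of the trivial case $d<0$ (where both sides vanish by \cref{lem: dkgamma}), I would set $b_i\coloneqq \beta_i\big(D^k(f_s^\RR)^\sigma;\Fbb_2\big)$ and apply \cref{thm:ST_ineq} with $N=0$ and $N=1$, obtaining
\[
1+\mu_\sigma \geq \sum_{i\geq 0} b_i \quad \text{and} \quad \mu_\sigma \geq \sum_{i\geq 1} b_i.
\]
Since $\chi_{Top}$ is field-independent, the target quantity reads $(-1)^d\chi_{Top}\big(D^k(f_s^\RR)^\sigma\big)=\sum_i(-1)^{d-i}b_i$. I would then split into $d$ even and $d$ odd. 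For $d$ even this is at most $\sum_i b_i\leq 1+\mu_\sigma$, matching the left-hand side. For $d$ odd the expression becomes $\sum_{i\geq 1}(-1)^{d-i}b_i-b_0\leq \mu_\sigma - b_0$, which is at most $\mu_\sigma-1 = (-1)^d\chi_{Top}\big(D^k(f_s)^\sigma\big)$ whenever $b_0\geq 1$.

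The hard part will be the residual case: $d$ odd, $D^k(f_s^\RR)^\sigma=\varnothing$ (so $b_0=0$) and $\mu_\sigma=0$, where the bound above misses by a unit. My strategy is to show that this combination cannot occur. If $\mu_\sigma=0$ and $D^k(f)^\sigma\neq\varnothing$, then \cref{lem: dkgamma} forces $D^k(f)^\sigma$ to be smooth at the real point $(0;\ldots;0)$. Because its defining equations have real coefficients, the complex Jacobian at $(0;\ldots;0)$ is itself a real matrix, so its $\RR$-rank equals its $\CC$-rank; the real implicit function theorem then produces a real-smooth submanifold of dimension $d$ through the origin, whose small real perturbation realises $D^k(f_s^\RR)^\sigma$ as a nonempty smooth real manifold. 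This forces $b_0\geq 1$ whenever $\mu_\sigma=0$, closing the missing case and completing the argument.
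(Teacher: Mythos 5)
Your proof is correct and follows essentially the same route as the paper's: Floyd's inequality (\cref{thm:ST_ineq}) applied with $N=0$ when $d_k^\sigma$ is even and with $N=1$ when it is odd, combined with the Betti numbers of $D^k(f_s)^\sigma$ supplied by the Marar--Mond criterion. The one place where you go beyond the paper is the residual case ($d_k^\sigma$ odd, $D^k(f_s^\RR)^\sigma=\varnothing$, $\mu\big(D^k(f)^\sigma\big)=0$): the paper simply declares $\beta_0\big(D^k(f_s^\RR)^\sigma\big)=0$ to be ``trivial'', which as written only covers the situation where the complex fixed-point space is also empty, whereas your real-Jacobian/implicit-function argument --- showing that a smooth, nonempty $D^k(f)^\sigma$ cut out by real equations forces a nonempty real locus after small real perturbation --- genuinely closes that case, and is a worthwhile refinement.
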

\begin{proof}
By Floyd's theorem,\cref{thm:ST_ineq},
\begin{equation}
\sum_{i=N}^\infty \beta_i\big(D^k(f_s)^\sigma\big)\geq\sum_{i=N}^\infty \beta_i\big(D^k(f_s^\RR)^\sigma\big)
\label{eq:draftVI}
\end{equation}
for all $N\geq 0$. 
Since $\beta_{0}\big(D^k(f_s)^\sigma\big) = 1$ and $\beta_{i}\big(D^k(f_s)^\sigma\big) = 0$ for $i\neq d_k^\sigma,0$  
by \cref{lem: dkgamma}, 
$$(-1)^{d_k^\sigma}\chi_{Top}\big(D^k(f_s)^\sigma\big) = \beta_{d_k^\sigma}\big(D^k(f_s)^\sigma\big) + (-1)^{d_k^\sigma}.$$
If $d_k^\sigma$ is even, then apply \cref{eq:draftVI} with $N=0$ to obtain the result:
$$ \beta_{d_k^\sigma}\big(D^k(f_s)^\sigma\big) + 1 \geq \sum_{i=0}^\infty \beta_i\big(D^k(f_s^\RR)^\sigma\big) \geq \chi_{Top}\big(D^k(f_s^\RR)^\sigma\big).$$
If $d_k^\sigma$ is odd, then \cref{eq:draftVI} with $N=1$ ensures
$$ \beta_{d_k^\sigma}\big(D^k(f_s)^\sigma\big)  \geq \sum_{i=1}^\infty \beta_i\big(D^k(f_s^\RR)^\sigma\big).$$
We can assume $\beta_0\big(D^k(f_s^\RR)^\sigma\big) \geq 1$, otherwise the result is trivial since the multiple point space would be empty. Hence, as desired,
\[ \beta_{d_k^\sigma}\big(D^k(f_s)^\sigma\big) - 1 \geq \sum_{i=1}^\infty \beta_i\big(D^k(f_s^\RR)^\sigma\big) - \beta_0\big(D^k(f_s^\RR)^\sigma\big)  \geq -\chi_{Top}\big(D^k(f_s^\RR)^\sigma\big).\qedhere\]
\end{proof}

\begin{theorem}\label{prop:draftXIII}
For all $k\geq2$ and all $i$,
\[A\beta_i\big(D^k(f_s)\big)=A\beta_i\big(D^k(f_s^\RR)\big).\]
\end{theorem}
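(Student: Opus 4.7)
The plan is to combine three ingredients: the alternating Smith theorem \cref{thm:STtau}, the degeneration of the image-computing spectral sequence in the complex corank-one case, and the good-real-perturbation hypothesis propagated through \cref{cor:GRPisCGRP}.

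First, I would apply \cref{thm:STtau} with $G=\ZZ/2\ZZ$ acting on $D^k(f_s)$ by componentwise complex conjugation; this action commutes with the simplicial $\Sigma_k$-action and its fixed set is exactly $D^k(f_s^\RR)$. This yields
\[\sum_{i\geq N}A\beta_i\big(D^k(f_s)\big)\geq\sum_{i\geq N}A\beta_i\big(D^k(f_s^\RR)\big)\qquad\text{for every }N\geq 0.\]
Since $f$ has corank one we have $A\beta_i(D^k(f_s))=0$ unless $i=d_k$, so choosing $N=d_k+1$ forces $A\beta_i(D^k(f_s^\RR))=0$ for all $i>d_k$, and $N=0$ gives the per-$k$ Smith bound $\sum_iA\beta_i(D^k(f_s^\RR))\leq A\beta_{d_k}(D^k(f_s))$.

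Next, in the complex corank-one case the nonzero terms of the $E^1$-page of \cref{thm: general icss} are $E^1_{0,0}$ and $E^1_{k-1,d_k}$ for $k\geq 2$, and these lie on pairwise distinct anti-diagonals, so every differential $d_r$ vanishes and the ICSS degenerates at $E^1$. Hence $\sum_i\beta_i(\im(f_s))=1+\sum_{k\geq 2}A\beta_{d_k}(D^k(f_s))$. The real ICSS gives the general bound $\sum_i\beta_i(\im(f_s^\RR))\leq\sum_k\sum_iA\beta_i(D^k(f_s^\RR))$, and the argument of \cref{cor:GRPisCGRP} (with $\Fbb_2$-coefficients) upgrades the good-real-perturbation hypothesis to total Betti equality $\sum_i\beta_i(\im(f_s^\RR))=\sum_i\beta_i(\im(f_s))$. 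Chaining these with the per-$k$ Smith bound forces every inequality to be an equality, so the real ICSS also degenerates at $E^1$ and the per-$k$ Smith bound is sharp: $\sum_iA\beta_i(D^k(f_s^\RR))=A\beta_{d_k}(D^k(f_s))$ for each $k\geq 2$.

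Finally, real-ICSS degeneration means $\beta_i(\im(f_s^\RR))=\sum_k A\beta_{i-k+1}(D^k(f_s^\RR))$, and by GRP this equals $\beta_i(\im(f_s))$, which in turn is either $A\beta_{d_k}(D^k(f_s))$ (when $i=k-1+d_k$ for some $k\geq 2$) or $0$. In the case $p>n+1$ the values $k-1+d_k$ are all distinct, so I would induct on $k$: the inductive hypothesis zeroes out the $k'<k$ contributions, the top-degree vanishing from the first paragraph zeroes out the $k'>k$ contributions, and the equation at $i=k-1+d_k$ collapses to $A\beta_{d_k}(D^k(f_s^\RR))=A\beta_{d_k}(D^k(f_s))$. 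In the case $p=n+1$ every $k-1+d_k$ equals $n$, so the vanishing equations at $i\notin\{0,n\}$ directly give $A\beta_j(D^k(f_s^\RR))=0$ for $j\neq d_k$. Either way, the per-$k$ Smith equality then forces $A\beta_i(D^k(f_s^\RR))=0$ for $i\neq d_k$ and delivers the desired matching at $i=d_k$.

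The main obstacle is that alternating Smith by itself only controls partial sums and Euler characteristics only control signed sums, neither of which yields degree-by-degree equality. The resolution is that GRP saturates every Smith inequality simultaneously by squeezing both sides against $\sum_i\beta_i(\im(f_s))$; the extremely rigid structure of the complex $E^1$-page (exactly one nonzero entry per column) then lets one peel off the contributions of each $k$ one at a time through the inductive / ICSS bookkeeping, with the split between $p=n+1$ and $p>n+1$ reflecting whether distinct $k$'s contribute to distinct Betti numbers of the image.
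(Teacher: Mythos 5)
Your argument is correct in substance and assembles the same three ingredients as the paper (equivariant Smith theory \cref{thm:STtau}, the rigidity of the complex $E^1$-page, and the GRP hypothesis), but organizes them genuinely differently. The paper never invokes \cref{cor:GRPisCGRP}: instead it isolates the minimal $k_0$ with $A\beta_{d_{k_0}}\big(D^{k_0}(f_s)\big)\neq 0$, checks by hand that no differential of any page can touch $E^1_{k_0-1,d_{k_0}}(f_s^\RR)$ and that no other entry contributes to $H_{d_{k_0}+k_0-1}(\im f_s^\RR)$, extracts the equality at that single degree from GRP, and then inducts on $k$. You instead front-load a global count: total Betti equality from \cref{cor:GRPisCGRP} squeezes the chain $\sum_i\beta_i(\im f_s^\RR)\leq\sum_{k}\sum_iA\beta_i\big(D^k(f_s^\RR)\big)\leq 1+\sum_{k\geq2} A\beta_{d_k}\big(D^k(f_s)\big)=\sum_i\beta_i(\im f_s)$ into equalities, which yields degeneration of the real ICSS and per-$k$ saturation of the Smith bound in one stroke; the remaining degree bookkeeping (induction on $k$ for $p>n+1$, a single anti-diagonal for $p=n+1$) is then routine. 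Your route gives a cleaner global reason for the real degeneration; the paper's route avoids the total-rank inequality for spectral sequences and localizes each conclusion to one entry.

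Two points need tightening. First, your justification for degeneration of the complex ICSS (``pairwise distinct anti-diagonals'') is false when $p=n+1$: all entries $E^1_{k-1,d_k}$ then lie on the anti-diagonal of total degree $n$. The degeneration itself is true --- $d_r$ sends $(k-1,d_k)$ to $(k-1-r,d_k+r-1)$, which coincides with some $(k'-1,d_{k'})$ only if $r(p-n-1)=-1$ --- and the paper uses it as known, so this is a fixable slip rather than a gap, but the stated reason should be replaced. Second, the coefficients: \cref{thm:STtau} works over $\Fbb_2$ while the GRP hypothesis concerns integral Betti numbers; the loop closes because $AH_*\big(D^k(f_s)\big)$ is free and concentrated in degree $d_k$, so $\Fbb_2$- and $\QQ$-ranks agree on the complex side and $\Fbb_2$-vanishing forces $\QQ$-vanishing on the real side. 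The paper is terse on this too, but your write-up should state over which field each inequality in the chain is taken.
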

\begin{proof}
We reason with the ICSSs $E^1_{p,q}(f_s;\QQ)$ and $E^1_{p,q}(f^\RR_s;\QQ)$, which have an inherently different distribution of non-trivial entries in its pages depending whether $p=n+1$ or not. We omit the coefficient to improve the readability. We recommend to follow the proof with \cref{fig: las mejores tablas del universo} at each step.
\newline

Assume $p> n+1$.
If, for some multiplicity $k$, we have that $A\beta_{d_k}\big(D^k(f_s)\big)=0$ then, since all the other $\Alt$-Betti numbers are also zero (see \cref{sec:complexdef}), we deduce that 
$A\beta_i\big(D^k(f_s^\RR)\big)=0$
for all $i$ by equivariant Smith Theory, \cref{thm:STtau}. 

Assume now that $k_0$ is the minimum $k$ such that $A\beta_{d_k}\big(D^k(f_s)\big)\neq0$. This appears in the ICSS as the rank of term $E^1_{k_0-1,d_{k_0}}(f_s)$. Hence, every column to its left is trivial by assumption. More precisely, every element $E^1_{a,b}(f_s)$ with $a<k_0-1$ and $b>0$ is zero, therefore so must be $E^1_{a,b}(f_s^\RR)$ with $a<k_0-1$ and $b>0$ by equivariant Smith Theory \cref{thm:STtau} (recall that $D^k(f_s)$ has no torsion in homology, \cref{lem: dkgamma}).

The only entries in $E^1_{*,*}$ that can kill the homology of $E^1_{k_0-1,d_{k_0}}(f_s^\RR)$ in some page of the spectral sequence are, for $r>0$, the $(k_0-1+r,d_{k_0}-r+1)$-entries (by taking images) or the $(k_0-1-r,d_{k_0}+r-1)$-entries (by taking kernels). The latter terms are trivial, as we have shown before. For the former, notice that they are always trivial because the elements in the column $k_0-1+r$ come from the spaces $D^{k_0+r}(f_s^\RR)$, which have dimension $d_{k_0+r}<d_{k_0}-r+1$ since $p>n+1$ (see \cref{lem: dkgamma}). In summary, we have that $E^\infty_{k_0-1,d_{k_0}}(f_s^\RR)\cong E^1_{k_0-1,d_{k_0}}(f_s^\RR)$. 

$E^\infty_{k_0-1,d_{k_0}}(f_s^\RR)$ is one of the possible terms contributing to the $(d_{k_0}+k_0-1)$-th Betti number of $\im(f_s^\RR)$. The other possible contributions come from the $(k_0-1+\ell,d_{k_0}-\ell)$-entries, $\ell\in\ZZ$; but for $\ell<0$ those entries in $E^1_{*,*}$ are zero by assumption on $k_0$ and for $\ell>0$ they are zero because, again, those columns are defined through the spaces $D^{k_0+\ell}(f_s^\RR)$, which have dimension $d_{k_0+\ell}<d_{k_0}-\ell$ since $p>n+1$ (see \cref{lem: dkgamma}).

Finally, for $f_s$ we already knew that $E^\infty_{k_0-1,d_{k_0}}(f_s)\cong E^1_{k_0-1,d_{k_0}}(f_s)$. Therefore, since $f_s^\RR$ is a good real perturbation of $f$ and $p>n+1$, we have that
\begin{align*}
    AH_{d_{k_0}}\big(D^{k_0}(f_s^\RR)\big)&\cong E^\infty_{k_0-1,d_{k_0}}(f_s^\RR)\cong H_{d_{k_0}+k_0-1}(\im(f_s^\RR)) \\ & \cong H_{d_{k_0}+k_0-1}(\im(f_s))  \cong E^\infty_{k_0-1,d_{k_0}}(f_s)\cong AH_{d_{k_0}}\big(D^{k_0}(f_s)\big).
\end{align*}
Now, equivariant Smith Theory \cref{thm:STtau} for the spaces $D^{k_0}(f_s)$ and $D^{k_0}(f_s^\RR)$ shows that the remaining $\Alt$-Betti numbers are zero, proving the result for $k_0$. We can argue by induction on $k$ using the same argument, because the previous non-trivial term is not going to influence the argument (assuming $p>n+1$). This shows the result in this case.
\newline

Assume $p= n+1$. The only (possible) non-trivial elements of the ICSS of $f_s$ are $E_{k-1,d_k}^1(f_s)$, they are in the same diagonal because $p=n+1$, and the sum of their ranks is equal to the $n$-th Betti number of $\im(f_s)$. For that reason, since $f_s^\RR$ is a good real perturbation, we must have
\begin{equation}  
\sum_k \rank E^1_{k+1,d_k}(f_s^\RR)\geq\sum_k \rank E^\infty_{k+1,d_k}(f_s^\RR)=  \sum_k \rank E^1_{k+1,d_k}(f_s).
\label{eq:GRPp=n}
\end{equation}
However, we also have that 
\begin{equation}
\rank E^1_{k+1,d_k}(f_s^\RR)\leq \rank E^1_{k+1,d_k}(f_s)
\label{eq:STp=n}
\end{equation} 
by equivariant Smith Theory, \cref{thm:STtau}. Then, we must have equality in both \cref{eq:GRPp=n,eq:STp=n}. The result follows now by applying equivariant Smith Theory again for the remaining $\Alt$-Betti numbers to show that they are zero.
\end{proof}

\begin{figure}
	\centering
  \includegraphics[width=0.90\textwidth]{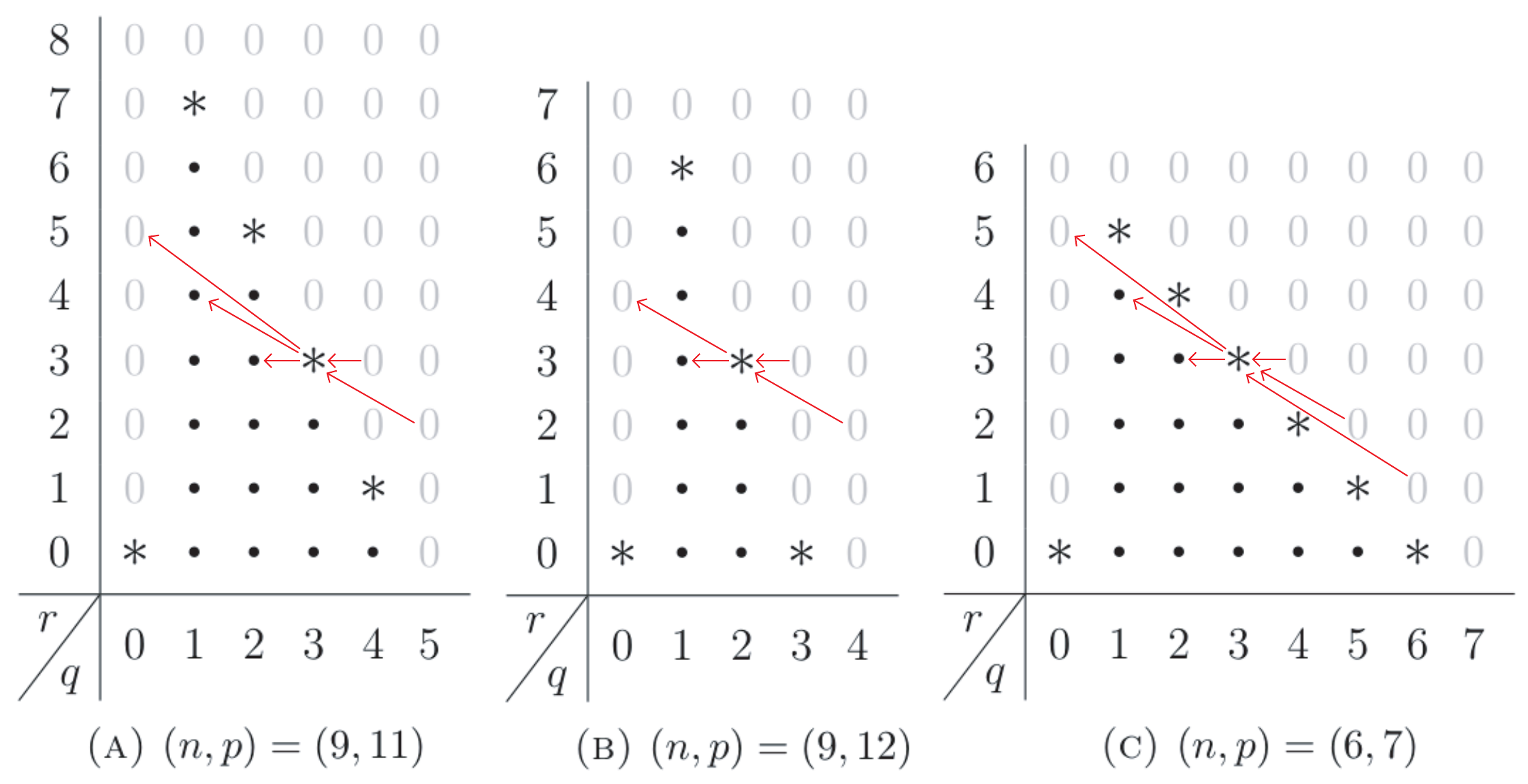}
\caption{Schematic of the ICSS $E^*_{q,r}(f_s^{(\RR)})$ where the arrows represent the different boundary operators at several pages that have target or source a given entry, \scalebox{1.3}{$\ast$} are the possibly non-zero entries in the complex case (also the corresponding dimension of the multiple point space in both cases) and \scalebox{0.6}{$\bullet$} are the homologies that are to be determined zero in the real case, for dimensions $(9,11)$, $(9,12)$ and $(6,7)$ from left to right.}
\label{fig: las mejores tablas del universo}
\end{figure}

By taking alternating sums, one has the following consequence.

\begin{corollary}\label{prop:draftX}
For all $k\geq2$,
\[ \chi_\Alt\big(D^k (f_s)\big)=\chi_\Alt\big(D^k(f_s^\RR)\big).\]
\end{corollary}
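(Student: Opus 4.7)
The plan is to reduce directly to \cref{prop:draftXIII}. By the definition recalled after \cref{lem:formulachitau}, we have
\[
\chi_\Alt\big(D^k(f_s)\big) = \sum_i (-1)^i A\beta_i\big(D^k(f_s)\big) \quad \text{and} \quad \chi_\Alt\big(D^k(f_s^\RR)\big) = \sum_i (-1)^i A\beta_i\big(D^k(f_s^\RR)\big).
\]
First I would note that both sums are finite: in the complex case this is because $A\beta_i\big(D^k(f_s)\big)$ vanishes outside of $i = d_k$ by Houston's theorem (see the discussion in \cref{sec:complexdef}), and in the real case this follows from the finite-dimensionality of the underlying simplicial complex.

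Then I would simply invoke \cref{prop:draftXIII} term by term: since $A\beta_i\big(D^k(f_s)\big) = A\beta_i\big(D^k(f_s^\RR)\big)$ for every $i$ and every $k\geq 2$, multiplying by $(-1)^i$ and summing yields the claimed equality of alternating Euler characteristics. There is no obstacle here; the corollary is a purely formal consequence of the previous theorem, and the content of the statement lies entirely in \cref{prop:draftXIII}. The only reason to state it separately is that $\chi_\Alt$ will be the more convenient invariant to handle in the subsequent applications (for example, when combining it with \cref{lem:formulachitau} to translate between alternating and topological Euler characteristics of the fixed-point spaces $D^k(f)^\sigma$).
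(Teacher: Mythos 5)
Your proposal is correct and matches the paper exactly: the corollary is stated there with no separate proof beyond the phrase ``by taking alternating sums,'' i.e.\ it is precisely the term-by-term application of \cref{prop:draftXIII} that you describe. Your extra remark on finiteness of the sums is a harmless (and valid) point of care.
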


\begin{corollary}\label{prop:draftXI}
For all $k\geq2$ and any $\sigma\in\Sigma_k$,
\[ \chi_{Top}\big(D^k(f_s)^\sigma\big)=\chi_{Top}\big(D^k(f_s^\RR)^\sigma\big).\]
\end{corollary}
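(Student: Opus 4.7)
The plan is to combine the equality of alternating Euler characteristics from \cref{prop:draftX} with the signed inequalities of \cref{prop:draftVII} via the character-theoretic formula of \cref{lem:formulachitau}. The key observation that makes the bookkeeping work is that the sign $\sgn(\sigma)$ and the parity $(-1)^{d_k^\sigma}$ differ by a constant independent of $\sigma$.

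First, I would apply \cref{lem:formulachitau} with $\tau = \Alt$ and $G = \Sigma_k$ acting both on $D^k(f_s)$ and on $D^k(f_s^\RR)$. Recall that a permutation $\sigma \in \Sigma_k$ has $\sgn(\sigma) = (-1)^{k-\sigma^\num}$, while by \cref{def:expecteddimension} one has $d_k^\sigma - d_k = \sigma^\num - k$. Hence $\sgn(\sigma) = (-1)^{d_k - d_k^\sigma} = (-1)^{d_k}(-1)^{d_k^\sigma}$, so the formula from \cref{lem:formulachitau} rewrites as
\[
(-1)^{d_k}\,k!\,\chi_\Alt\big(D^k(f_s)\big) = \sum_{\sigma \in \Sigma_k} (-1)^{d_k^\sigma}\chi_{Top}\big(D^k(f_s)^\sigma\big),
\]
and analogously for $f_s^\RR$.

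Second, since $\chi_\Alt\big(D^k(f_s)\big) = \chi_\Alt\big(D^k(f_s^\RR)\big)$ by \cref{prop:draftX}, subtracting the two identities yields
\[
\sum_{\sigma \in \Sigma_k} (-1)^{d_k^\sigma}\Big[\chi_{Top}\big(D^k(f_s)^\sigma\big) - \chi_{Top}\big(D^k(f_s^\RR)^\sigma\big)\Big] = 0.
\]
By \cref{prop:draftVII}, every summand on the left is non-negative. A sum of non-negative terms that vanishes forces each term to vanish, which gives the desired equality for every $\sigma \in \Sigma_k$.

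There is no real obstacle here beyond the sign calculation relating $\sgn(\sigma)$ to $(-1)^{d_k^\sigma}$, and the recognition that \cref{prop:draftVII} and \cref{prop:draftX} are exactly the ingredients that fit the template of \cref{lem:formulachitau}: one equality and a family of one-sided inequalities with compatible signs, forcing equalities term-by-term.
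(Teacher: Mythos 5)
Your proposal is correct and follows essentially the same route as the paper: apply \cref{lem:formulachitau} with $\tau=\Alt$ to both $D^k(f_s)$ and $D^k(f_s^\RR)$, use the parity identity $(-1)^{d_k}\sgn(\sigma)=(-1)^{d_k^\sigma}$ together with \cref{prop:draftX} to obtain equality of the signed sums, and then invoke the term-wise inequalities of \cref{prop:draftVII} to force equality for each $\sigma$. The only cosmetic difference is that you verify the sign identity directly from $\sgn(\sigma)=(-1)^{k-\sigma^\num}$ and $d_k^\sigma=d_k-k+\sigma^\num$, whereas the paper cites it from the literature.
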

\begin{proof}
Using \cref{lem:formulachitau} for $\tau=\Alt$ and $M=D^k(f_s)$ or $M=D^k(f_s^\RR)$ and multiplying both equalities by $(-1)^{d_k}$,
\begin{equation}\label{eq:preprint}\begin{aligned}
(-1)^{d_k}\chi_\Alt\big(D^k(f_s)\big)&=\frac{1}{k!}\sum_\sigma(-1)^{d_k}\sgn(\sigma)\chi_{Top}\big(D^k(f_s)^\sigma\big),\text{ and}\\
(-1)^{d_k}\chi_\Alt\big(D^k(f_s^\RR)\big)&=\frac{1}{k!}\sum_\sigma(-1)^{d_k}\sgn(\sigma)\chi_{Top}\big(D^k(f_s^\RR)^\sigma\big).
\end{aligned}\end{equation}
The parity is convenient: $(-1)^{d_k}\sgn(\sigma)=(-1)^{d_k^\sigma}$ by \cite[Lemma 4.4]{GimenezConejero2022c} (or \cite[Lemma 6.3.2]{Robertothesis}).
By combining \cref{eq:preprint} and \cref{prop:draftX} altogether we obtain
\begin{equation}\label{eq:equal_chitop}
    \sum_{\sigma} (-1)^{d_k^\sigma}\chi_{Top}\big(D^k(f_s)^\sigma\big)  = \sum_{\sigma} (-1)^{d_k^\sigma}\chi_{Top}\big(D^k(f_s^\RR)^\sigma\big).
\end{equation}
Since \cref{prop:draftVII} ensures $(-1)^{d_k^\sigma}\chi_{Top}\big(D^k(f_s)^\sigma\big) \geq (-1)^{d_k^\sigma}\chi_{Top}\big(D^k(f_s^\RR)^\sigma\big)$ for each $\sigma$ and $k\geq 2$, if any of these inequalities were strict, the equality in \cref{eq:equal_chitop} would not hold.
\end{proof}

\begin{corollary}\label{prop:draftXII}
Assuming that $D^k(f_s^\RR)^\sigma\neq\varnothing$,
\[\begin{aligned}
\beta_i\big(D^k(f_s^\RR)^\sigma\big)&=0\text{ if } (-1)^{d_k^\sigma}\neq(-1)^i \text{ and }i>0,\\
\beta_0\big(D^k(f_s^\RR)^\sigma\big)&=1\text{ if } (-1)^{d_k^\sigma}\neq1.
\end{aligned}\]
\end{corollary}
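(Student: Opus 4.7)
The plan is to combine the two pieces of information we already have about the pair $\big(D^k(f_s)^\sigma,\,D^k(f_s^\RR)^\sigma\big)$: equivariant Smith theory gives a pair of inequalities between sums of $\Fbb_2$-Betti numbers, and \cref{prop:draftXI} gives the equality of Euler characteristics. Since the complex side has a very restricted Betti profile by \cref{lem: dkgamma}, namely $\beta_0\big(D^k(f_s)^\sigma\big)=1$, $\beta_{d_k^\sigma}\big(D^k(f_s)^\sigma\big)\geq 0$, and $\beta_i\big(D^k(f_s)^\sigma\big)=0$ for $i\notin\{0,d_k^\sigma\}$, these three inputs should force the real Betti numbers to concentrate in degrees $0$ and $d_k^\sigma$ with the announced values.

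Concretely, I would set $b_i\coloneqq\beta_i\big(D^k(f_s^\RR)^\sigma;\Fbb_2\big)$ and $\beta\coloneqq\beta_{d_k^\sigma}\big(D^k(f_s)^\sigma\big)$. Since $D^k(f_s^\RR)^\sigma$ is non-empty, $b_0\geq 1$. From \cref{thm:ST_ineq} with $N=0$ one has $\sum_i b_i\leq 1+\beta$, and from \cref{prop:draftXI} one has
\[
\sum_i (-1)^i b_i \;=\; \chi_{Top}\big(D^k(f_s^\RR)^\sigma\big)\;=\;\chi_{Top}\big(D^k(f_s)^\sigma\big)\;=\;1+(-1)^{d_k^\sigma}\beta.
\]
Then I would split into the two parities of $d_k^\sigma$. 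If $d_k^\sigma$ is even, subtracting the Euler identity from the Smith inequality gives $2\sum_{i\text{ odd}}b_i\leq 0$, so $b_i=0$ for every odd $i$, which is exactly the first assertion in this case (the second assertion is vacuous since $(-1)^{d_k^\sigma}=1$).

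If $d_k^\sigma$ is odd, I would instead use \cref{thm:ST_ineq} with $N=1$, giving $\sum_{i\geq 1}b_i\leq \beta$. Rewriting the Euler identity as $b_0+\sum_{i\text{ even},\,i\geq 2}b_i = 1-\beta+\sum_{i\text{ odd}}b_i$ and bounding the right-hand side by $1-\beta+\sum_{i\geq 1}b_i\leq 1$, we conclude $b_0+\sum_{i\text{ even},\,i\geq 2}b_i\leq 1$. Combined with $b_0\geq 1$, this forces $b_0=1$ and $b_i=0$ for every even $i\geq 2$, matching both assertions in this case. The argument is essentially an accounting exercise once \cref{prop:draftXI} and Floyd's inequality are in hand, so I do not foresee a serious obstacle; the only subtlety is remembering to invoke the Smith bound at the correct threshold $N$ according to the parity of $d_k^\sigma$ so that the inequality $\sum b_i \le 1+\beta$ is tight enough to eliminate all the unwanted Betti numbers.
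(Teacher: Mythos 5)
Your argument is correct and is essentially the paper's own proof: both combine the Euler-characteristic equality of \cref{prop:draftXI} with Floyd's inequality applied at threshold $N=0$ or $N=1$ according to the parity of $d_k^\sigma$, together with the fact that the complex fixed-point space is a connected wedge of $d_k^\sigma$-spheres. The only cosmetic difference is that the paper indexes the Betti numbers as $\beta_{d_k^\sigma-i}$, whereas you work directly with $b_i$; the bookkeeping is identical.
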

\begin{proof}
Since
\begin{align*}
(-1)^{d_k^\sigma}\chi_{Top}\big(D^k(f_s)^\sigma\big)  =& \beta_{d_k^\sigma}\big(D^k(f_s)^\sigma\big) + (-1)^{d_k^\sigma},\text{ and}\\
(-1)^{d_k^\sigma}\chi_{Top}\big(D^k(f_s^\RR)^\sigma\big) =& \sum_{ i > 0\text{ even}}\beta_{d_k^\sigma - i}\big(D^k(f_s^\RR)^\sigma\big) - \sum_{ i > 0\text{ odd}}\beta_{d_k^\sigma - i}\big(D^k(f_s^\RR)^\sigma\big) \\ & + (-1)^{d_k^\sigma}\beta_{0}\big(D^k(f_s^\RR)^\sigma\big),
\end{align*}
by \cref{prop:draftXI} this implies
\begin{equation}\label{eq:main_draftXII}
\begin{split}
\sum_{ i > 0 \text{ odd}}\beta_{d_k^\sigma - i}\big(D^k(f_s^\RR)^\sigma\big)  =&  \sum_{ i > 0 \text{ even}}\beta_{d_k^\sigma - i}\big(D^k(f_s^\RR)^\sigma\big) - \beta_{d_k^\sigma}\big(D^k(f_s)^\sigma\big) \\
 & + (-1)^{d_k^\sigma}\big(\beta_{0}\big(D^k(f_s^\RR)^\sigma\big)-1\big).
 \end{split}
\end{equation}
Furthermore, by Smith theory,
\begin{equation}\label{eq:ST_draftXII}
\sum_{i\geq L}\beta_{i}\big(D^k(f_s)^\sigma\big) \geq \sum_{i \geq L}\beta_{i}\big(D^k(f_s^\RR)^\sigma\big),\ \forall L.
\end{equation}
Hence, if $d_k^\sigma$ is even, applying \cref{eq:ST_draftXII} for $L=0$ and using \cref{eq:main_draftXII},
\[\sum_{ i > 0 \text{ odd}}\beta_{d_k^\sigma - i}\big(D^k(f_s^\RR)^\sigma\big) \leq 0,\]
yielding $\beta_{d_k^\sigma - i}\big(D^k(f_s^\RR)^\sigma\big) = 0$ for all odd $i$.
If $d_k^\sigma$ is odd, the same argument for $M=1$ shows that
\[\sum_{ i > 0\text{ odd}}\beta_{d_k^\sigma - i}\big(D^k(f_s^\RR)^\sigma\big) \leq 1-\beta_{0}\big(D^k(f_s^\RR)^\sigma\big),\]
so we also get $\beta_{d_k^\sigma - i}\big(D^k(f_s^\RR)^\sigma\big) = 0$ for all odd $i$ and, moreover, $\beta_{0}\big(D^k(f_s^\RR)^\sigma\big)=1$.
\end{proof}

The following is a technical lemma we use later.

\begin{lemma}\label{lem:permutationsofRk}
Let $\Sigma_k$ be the group of permutations acting on $\RR^N\times\RR^k$ by permuting the last $k$ copies of $\RR$. Denote by $F$ the set of fixed points by some permutation. Then the complement of $F$ is a disjoint union of open sets $U_1,\dots,U_\kappa$. In these conditions, for every $i$ we have that $U_i\cap\sigma U_i\neq\varnothing$ if, and only if, $\sigma=\id\in\Sigma_k$.
\end{lemma}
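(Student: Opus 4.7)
The plan is to identify the connected components $U_1,\dots,U_\kappa$ explicitly as open ``chambers'' cut out by the fat diagonal in the $\RR^k$ factor, and then observe that $\Sigma_k$ acts simply transitively on the set of these chambers.

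First I would describe $F$ concretely. A point $(y,x_1,\dots,x_k)\in\RR^N\times\RR^k$ is fixed by some non-trivial $\sigma\in\Sigma_k$ if and only if $x_i=x_j$ for some pair $i\neq j$, so
\[
F=\RR^N\times\bigcup_{i<j}\{x_i=x_j\}.
\]
Hence the complement is $\RR^N\times\bigl\{(x_1,\dots,x_k):x_i\text{ pairwise distinct}\bigr\}$. On this complement, the function that orders the $x$-coordinates is continuous and locally constant, so for each $\tau\in\Sigma_k$ the set
\[
U_\tau\coloneqq\bigl\{(y,x_1,\dots,x_k)\in\RR^N\times\RR^k:x_{\tau(1)}<x_{\tau(2)}<\cdots<x_{\tau(k)}\bigr\}
\]
is open and (being the product of $\RR^N$ with a convex cone) connected. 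The complement of $F$ is the disjoint union of the $U_\tau$ as $\tau$ ranges over $\Sigma_k$, so these are exactly the connected components; in particular $\kappa=k!$.

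Next I would analyse the $\Sigma_k$-action on this indexing. With the left action $(\sigma\cdot p)_j=p_{\sigma^{-1}(j)}$, a direct substitution into the defining inequalities shows $\sigma\cdot U_\tau=U_{\sigma\tau}$. Therefore for a component $U_i=U_\tau$ we have
\[
U_i\cap\sigma U_i=U_\tau\cap U_{\sigma\tau},
\]
and since distinct components are disjoint, this intersection is non-empty if and only if $\sigma\tau=\tau$, equivalently $\sigma=\id$.

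There is no real obstacle here; the only mild care needed is to be consistent about the convention of the $\Sigma_k$-action (left vs.\ right), since that affects whether one writes $U_{\sigma\tau}$ or $U_{\tau\sigma^{-1}}$, but the conclusion that the stabiliser of each chamber is trivial is independent of that choice. The whole argument is elementary and takes only a few lines once the chambers $U_\tau$ have been identified.
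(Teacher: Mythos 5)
Your proof is correct and follows essentially the same route as the paper's: both identify the components of the complement of $F$ with the chambers of the braid arrangement, i.e.\ with total orderings of the last $k$ coordinates, and conclude that only the identity permutation preserves a given ordering. Your version is slightly more explicit about the bijection $U_\tau\leftrightarrow\tau$ and the simply transitive action, but the content is the same.
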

\begin{proof}
We can assume that $N=0$, for positive $N$ the argument follows by projecting to $\RR^k$.

The set of fixed points of each tranposition $(i,j)$ divides $\Rbb^k$ in two half-spaces defined by the inequalities $x_{i} < x_j$ and $x_j < x_i$.
This shows that each $U_i$ in $\Rbb^k$ is uniquely determined by a total ordering over all variables,
$$x_{i_1} < x_{i_2} < \cdots < x_{i_k},$$
where  $x_{i_j} < x_{i_{j+1}}$ if $U_i$ is contained in the corresponding half-space of the permutation $(i_j,i_{j+1})$.
The only permutation $\sigma \in \Sigma_k$ that respects this ordering is the identity, therefore $U_i\cap\sigma U_i\neq\varnothing$ if and only if $\sigma = \id$.
\end{proof}

\begin{lemma}\label{lem:orbits}
If $A\beta_{d_k}\big(D^k(f_s)\big)>0$ and $d_k>0$, then there is only one orbit of connected components of $D^k(f_s^\RR)$, it has an odd number of connected components and, furthermore, there is only one component if $d_k$ is odd.
\end{lemma}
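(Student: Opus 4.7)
The plan is to split the argument by the parity of $d_k$, using \cref{prop:draftXII} for $\sigma=\id$ and for transpositions, with \cref{prop:draftXIII} to transfer the vanishing of $A\beta_0$ from the complex to the real side. First, by \cref{prop:draftXIII}, $A\beta_{d_k}(D^k(f_s^\RR))=A\beta_{d_k}(D^k(f_s))>0$, so $D^k(f_s^\RR)$ is nonempty. Moreover $D^k(f_s)$ is connected, being the Milnor fiber of the positive-dimensional ICIS $D^k(f)$ (cf.\ \cref{lem: dkgamma}), so $H_0(D^k(f_s);\QQ)=\QQ$ is the trivial $\Sigma_k$-module and $A\beta_0(D^k(f_s);\QQ)=0$ for $k\geq 2$; then by \cref{prop:draftXIII} the same holds for $D^k(f_s^\RR)$. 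The case $d_k$ odd follows at once: $d_k^{\id}=d_k$ is odd, so \cref{prop:draftXII} applied with $\sigma=\id$ gives $\beta_0(D^k(f_s^\RR))=1$, settling all three claims.

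For $d_k$ even, let $X=\pi_0(D^k(f_s^\RR))$, regarded as a finite $\Sigma_k$-set, and use \cref{rem: eqs Dk} to view $D^k(f_s^\RR)\subset\RR^{n+k-1}$ with $\Sigma_k$ permuting the last $k$ coordinates. The plan is to show that $X$ is a single orbit of odd size. First I would rule out orbits with trivial stabilizer: by \cref{lem:permutationsofRk} the complement of the fixed-point diagonal in $\RR^{n+k-1}$ decomposes into $k!$ open sectors permuted freely by $\Sigma_k$, so a component of $D^k(f_s^\RR)$ avoiding the diagonal would lie in a single sector and its full orbit would contribute a copy of the regular representation---hence of the alternating one---to $\QQ[X]$, contradicting $A\beta_0(D^k(f_s^\RR);\QQ)=0$ via Frobenius reciprocity. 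Therefore every component $C$ meets some $\mathrm{Fix}(\tau_C)$ for a transposition $\tau_C$, forcing $\tau_C\in\mathrm{Stab}(C)$; in particular $D^k(f_s^\RR)^\tau\neq\varnothing$ for every transposition $\tau$, since all are conjugate. Now \cref{prop:draftXII} applied to $\tau$ (with $d_k^\tau=d_k-1$ odd) gives $D^k(f_s^\RR)^\tau$ connected, hence contained in a unique component $C_\tau$. For two components $C,C'$ with witnessing transpositions $\tau,\tau'$, conjugacy $\tau'=g\tau g^{-1}$ yields $D^k(f_s^\RR)^{\tau'}=g\cdot D^k(f_s^\RR)^\tau\subseteq gC$, and hence $C'=gC$: so $X$ is a single $\Sigma_k$-orbit.

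The step I expect to be the main delicate point is the odd cardinality, for which the plan is to show $|X^\tau|=1$ for a transposition $\tau$; since $\tau$ is an involution on $X$, the complement $X\setminus X^\tau$ splits into pairs and $|X|\equiv|X^\tau|\pmod 2$. The hard part is excluding extra components on which $\tau$ would act freely. I would close this via a geometric observation in the ambient $\RR^{n+k-1}$: writing $\tau=(ij)$, the hyperplane $\{y_i=y_j\}$ disconnects its complement into the two open half-spaces $\{y_i>y_j\}$ and $\{y_i<y_j\}$, which $\tau$ swaps. A component $C'$ with $\tau(C')=C'$ but $C'\cap D^k(f_s^\RR)^\tau=\varnothing$ would be connected and disjoint from $\{y_i=y_j\}$, hence entirely contained in one half-space, forcing $\tau(C')$ in the other---contradicting $\tau(C')=C'$. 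Consequently every component in $X^\tau$ meets $D^k(f_s^\RR)^\tau$, and by connectedness of the latter it lies in the unique component $C_\tau$; so $|X^\tau|=1$ and $|X|$ is odd.
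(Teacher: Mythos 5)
Your proof is correct and uses essentially the same ingredients as the paper's: the vanishing of $A\beta_0$ to exclude free orbits, the connectedness of $D^k(f_s^\RR)^\tau$ from \cref{prop:draftXII} for transpositions, and the sector/half-space geometry of \cref{lem:permutationsofRk}. The only difference is organizational --- you establish the single orbit first via conjugacy of the witnessing transpositions and then get odd cardinality from the involution parity count $|X|\equiv|X^\tau|\pmod 2$, whereas the paper separately rules out multiple odd orbits and any even orbit --- but the underlying argument is the same.
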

\begin{proof}
Let us fix the notation
\[ D\coloneqq D^k(f_s^\RR)=\bigsqcup_{i=1}^{\widetilde{\Theta}} \bigsqcup_{j=1}^{\widetilde{\theta_i}} \widetilde{D^i_j} \sqcup \bigsqcup_{i=1}^\Theta \bigsqcup_{j=1}^{\theta_i} D^i_j,\]
where $\widetilde{D}_j^i$ and $D^i_j$ are connected components of $D$, $\sqcup_{j=1}^{\widetilde{\theta_i}} \widetilde{D^i_j}$ are orbits of those connected components such that $\beta_{d_k}(\widetilde{D}_j^i)=0$, and $\sqcup_{j=1}^{\theta_i} D^i_j$ are orbits of those connected components with non-trivial $d_k$-th Betti number. In the following, by \textit{orbit} we always mean an orbit of \emph{connected components}.

Obviously, the number of orbits with non-trivial $d_k$-th Betti number, i.e., $\Theta$, is at least one, since $A\beta_{d_k}\big(D^k(f_s^\RR)\big)>0$. Observe also that none of the $\widetilde{\theta_i}$ or $\theta_i$ are equal to $k!$, otherwise
\[ A\beta_0(D)\geq A\beta_0 \left(\sqcup_{j=1}^{\theta_i} D^i_j\right)=1,\]
(or the equivalent statement with $\widetilde{D^i_j}$) because one could give an element in $AH_0$ by taking the image of a connected component by $\sum_\sigma\sgn(\sigma)\sigma$, contradicting \cref{sec:complexdef,prop:draftXIII}.
\newline

We show now that $\widetilde{\Theta}=0$ and $\Theta=1$. Notice that $\beta_0(D)=1$ if $d_k$ is odd by \cref{prop:draftXII}, so we can assume that $d_k$ is even. We prove the result using several times the fact that we cannot have more than one orbit with fixed points by a given transposition, say $\sigma=(a\ b)$, because then  $\beta_0(D^\sigma)$ has to be one (by \cref{prop:draftXII}) since $d_k^\sigma=d_k-1$ is odd.

It is clear that $\beta_0(D)>1$ if $\widetilde{\Theta}\neq0$ or $\Theta>1$. Recall that, in our case, the group $\Sigma_k$ acts by permuting the last $k$ copies of  $\RR$ in $\RR^{n-1+k}$ (see \cref{rem: eqs Dk}), in particular we can use \cref{lem:permutationsofRk}. Any orbit with an odd number of connected components has a component fixed by any given transposition. Indeed, a transposition either permutes pairs of components or fixes them, because it has order two. If there is more than one orbit with an odd number of connected components then we reach a contradiction with \cref{prop:draftXII}: If a transposition fixes a connected component (not necessarily point-wise) then that transposition also has fixed points, because a transposition acts by a reflection on a top-dimensional hyperplane and it splits the ambient space into two disjoint half-spaces (see \cref{lem:permutationsofRk}).  We show now that there are no orbits with even number of connected components, proving the result.

Given that any orbit has less than $k!$ elements, there is always at least one non-trivial permutation that fixes a connected component in every orbit by the orbit stabilizer theorem (again, not necessarily point-wise).  However, by \cref{lem:permutationsofRk}, if a permutation fixes a connected component then the connected component has to intersect one of the hyperplanes fixed by some transposition, so the component has fixed points by that transposition. Then, any orbit with an even number of components has necessarily a positive even number of components with fixed points by a transposition, contradicting \cref{prop:draftXII}.
\end{proof}

\begin{theorem}\label{lem:manifold}
If $D^k(f)$ is singular and $d_k>0$, then $D^k(f_s^\RR)^\sigma$ is a disjoint union of oriented closed manifolds of dimension $d_k^\sigma$ for all $\sigma$ such that $d_k^\sigma\geq0$.
\end{theorem}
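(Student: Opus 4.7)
I would approach \cref{lem:manifold} in three stages, corresponding to the three assertions: smoothness of dimension $d_k^\sigma$, orientability, and being a disjoint union of closed (compact, boundaryless) manifolds.

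For the first stage, I would apply the real analog of the Marar-Mond criterion \cref{lem: dkgamma} to $f_s^\RR$: since it is a stable map, each $D^k(f_s^\RR)^\sigma$ is either empty or a smooth real submanifold of dimension $d_k^\sigma$, with the origin removed from the possible singular points because $f_s^\RR$ is stable and the transversality conditions are the same as in the complex case.

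For orientability, I would use the corank-1 embedding of \cref{rem: eqs Dk} to realize $D^k(f_s^\RR)^\sigma$ as a smooth zero-set of real analytic equations in $\RR^{n+k-1}$, with linearly independent gradients at every point (by the smoothness of the first stage). The gradients then globally trivialize the normal bundle, so from the splitting $T\RR^{n+k-1}|_{D^k(f_s^\RR)^\sigma} \cong T D^k(f_s^\RR)^\sigma \oplus \nu$ and the multiplicativity of determinant line bundles, the determinant line bundle of $T D^k(f_s^\RR)^\sigma$ is trivial, yielding orientability.

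For closedness, compactness follows because $D^k(f_s^\RR)^\sigma$ is a closed subset of the compact complex Milnor fiber $D^k(f_s)^\sigma$. The absence of boundary is the heart of the argument: I would argue by contradiction. If a connected component $C$ had non-empty boundary, then a compact manifold-with-boundary argument yields $\beta_{d_k^\sigma}(C) = 0$, lowering the top Betti number of $D^k(f_s^\RR)^\sigma$. Combining this loss with the parity constraints of \cref{prop:draftXII}, the Euler characteristic equality of \cref{prop:draftXI}, and the equivariant Smith inequality \cref{thm:STtau}, I would derive a Betti-number imbalance that contradicts these equalities. The argument would handle $\sigma = \id$ first, exploiting the orbit and component structure of \cref{lem:orbits}, and then extend to arbitrary $\sigma$ via the compatibility of fixed-point sets under the $\Sigma_k$-action.

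The main obstacle I anticipate is this last step: systematically ruling out boundary components across all permutations $\sigma$, as it requires delicately balancing the Euler characteristic equality with the parity-based Betti-number constraints and the Smith inequalities across all degrees simultaneously.
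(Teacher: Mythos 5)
The decisive problem is your no‑boundary step for $\sigma\neq\id$: the bookkeeping you propose (parity constraints from \cref{prop:draftXII}, the Euler characteristic equality of \cref{prop:draftXI}, and the Smith inequalities) provably cannot rule out boundary. Concretely, suppose $\sigma$ is a transposition with $d_k^\sigma=3$ and $\mu\big(D^k(f)^\sigma\big)=1$, and imagine $D^k(f_s^\RR)^\sigma$ were a solid torus $S^1\times D^2$: it is connected, has $\beta_1=1$, $\beta_2=\beta_3=0$, so it satisfies \cref{prop:draftXII}; its Euler characteristic is $0=1-\mu=\chi_{Top}\big(D^k(f_s)^\sigma\big)$, so it satisfies \cref{prop:draftXI}; and its total $\Fbb_2$‑Betti number $2$ exactly saturates \cref{thm:ST_ineq} against the complex fiber $\simeq S^3$, with all the truncated Smith inequalities holding as well. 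So no contradiction is extractable from these constraints alone. The paper proceeds differently: it first settles $\sigma=\id$ essentially as you do (via \cref{lem:orbits}, each component is a copy of the same connected manifold and carries nontrivial top homology, hence is closed and orientable), and then realizes $D^k(f_s^\RR)^\sigma$ as the \emph{transverse} intersection of the closed manifold $D^k(f_s^\RR)$ with the fixed hyperplane(s) $H^\sigma$. Transversality is proved by a symmetry argument: for $p_n\to p$ with $p_n\notin H^\sigma$, the segments from $p_n$ to $\sigma p_n$ are orthogonal to $H^\sigma$ and limit to a tangent direction of $D^k(f_s^\RR)$ at $p$ orthogonal to $H^\sigma$. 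A transverse hyperplane slice of a closed oriented manifold is again closed and oriented, so all three properties for $\sigma\neq\id$ are inherited at once, with no further homological input.

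A second, smaller issue is your orientability step: smoothness of the real zero set does \emph{not} imply that the chosen defining equations have linearly independent gradients (consider $x^2=0$), so "by the smoothness of the first stage" is not a valid justification. What does give independence is the smoothness of the \emph{complex} space $D^k(f_s)^\sigma$ of the expected dimension (\cref{lem: dkgamma} applied to $f_s$): the divided‑difference equations of \cref{rem: eqs Dk} together with the linear equations of $H^\sigma$ then form a regular system over $\CC$, and full rank of that (real‑coefficient) Jacobian at a real point is full rank of the real Jacobian. Note that if you argue this way you simultaneously recover the transversality of $D^k(f_s^\RR)$ with $H^\sigma$ and hence can close the gap of the previous paragraph: a regular level set has no boundary away from the Milnor sphere, and $D^k(f_s^\RR)^\sigma$ is a closed subset of the compact boundaryless $D^k(f_s^\RR)$ already handled in the case $\sigma=\id$. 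Relatedly, the paper never invokes a real Marar--Mond criterion for $f_s^\RR$; it derives real smoothness from the complex statement in exactly this way.
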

\begin{proof} We can assume that $D^k(f_s^\RR)^\sigma\neq\varnothing$, otherwise the result is vacuously true.

We show first the case $\sigma=\id$. 

It is clear that $D^k(f_s)$ is the Milnor fiber of the ICIS $D^k(f)$, in particular it is given by a regular value of a set of (real) equations $g:\CC^N\to\CC^K$ (see \cref{rem: eqs Dk}).
Therefore, $D^k(f_s)$ is a manifold since it is the preimage of a regular value. Furthermore, by \cref{rem:propertiesDk,prop:draftXIII},
 we know that the top dimensional homology of $D^k(f_s^\RR)$ is not trivial and, by \cref{lem:orbits}, it is a disjoint union of copies of the same connected manifold. This implies that each of those copies is an oriented compact smooth manifold without boundary.
\newline

Now assume that $\sigma$ is a transposition. Observe that $D^k(f_s^\RR)^\sigma$ is the intersection of the manifold $D^k(f_s^\RR)$ with the hyperplane $H^\sigma$ of fixed points by $\sigma$. If we show that this is a transverse intersection then $D^k(f_s^\RR)^\sigma$ is also an oriented compact manifold without boundary (see, for example \cite[pp. 60 and 100]{Guillemin1974}). Indeed, by symmetry, the tangent space at any point of $D^k(f_s^\RR)$ cannot be contained in the hyperplane $H^\sigma$. To see that, observe first that the dimension of $D^k(f_s^\RR)^\sigma$ is $d_k^\sigma=d_k-1$, so $D^k(f_s^\RR)$ is not contained in $H^\sigma$. Moreover, for any point $p$ of $D^k(f_s^\RR)\cap H^\sigma$, there is a sequence of points $p_n$ in $D^k(f_s^\RR)\setminus H^\sigma$ converging to $p$. Hence, taking the limit $\ell$ of the segments from $p_n$ to $\sigma p_n$, which are orthogonal to $H^\sigma$, in the corresponding Grassmanian,  we see that $\ell$ is orthogonal to $H^\sigma$ and also contained in the tangent space $T_p D^k(f_s^\RR)$ by smoothness.
\newline

The general case follows the same steps, after noting that any linear space of fixed points by any permutation is an intersection of hyperplanes fixed by transpositions.
\end{proof}

\begin{corollary}\label{cor:oddDk}
If $D^k(f)^\sigma$ is singular of odd dimension then $\beta_i\big(D^k(f_s^\RR)^\sigma\big)=1$ if $i=0,d_k^\sigma$ and zero otherwise.
\end{corollary}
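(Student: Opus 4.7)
The plan is to derive the corollary as a direct consequence of the two preceding results combined with Poincaré duality. Since $D^k(f)^\sigma$ is singular, the ambient space $D^k(f)$ is itself singular, and since the odd dimension $d_k^\sigma$ is at least $1$ we also have $d_k\geq d_k^\sigma>0$. This puts us inside the hypotheses of \cref{lem:manifold}, which provides the crucial regularity statement: $D^k(f_s^\RR)^\sigma$ is a disjoint union of oriented closed manifolds of (real) dimension $d_k^\sigma$. (If the space were empty the Betti number equalities would fail, so non-emptiness is implicit in the statement; alternatively, one may deduce it from the fact that the corresponding complex ICIS $D^k(f)^\sigma$ has a Milnor fibre with nontrivial alternating homology via \cref{prop:draftXIII}.)

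Next I would invoke \cref{prop:draftXII}. Because $d_k^\sigma$ is odd, the two parity conditions there give two pieces of information at once: $\beta_0\big(D^k(f_s^\RR)^\sigma\big)=1$ and $\beta_i\big(D^k(f_s^\RR)^\sigma\big)=0$ for every even $i>0$. The first of these equalities, combined with the description of the space as a disjoint union of closed oriented manifolds, forces $D^k(f_s^\RR)^\sigma$ to be a single connected closed oriented manifold $M$ of odd dimension $d_k^\sigma$.

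The final step is Poincaré duality on $M$: $\beta_i(M)=\beta_{d_k^\sigma-i}(M)$ for every $i$. Applying this to $i=d_k^\sigma$ immediately yields $\beta_{d_k^\sigma}\big(D^k(f_s^\RR)^\sigma\big)=\beta_0=1$. For odd $i$ in the range $0<i<d_k^\sigma$, the complementary index $d_k^\sigma-i$ is even and strictly positive, so $\beta_{d_k^\sigma-i}=0$ by the vanishing established in the previous paragraph, and therefore $\beta_i=0$ as well. Together with the even-index vanishing, this covers every $i\neq 0,d_k^\sigma$ and concludes the proof.

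Since all the deep input is already isolated in \cref{lem:manifold} and \cref{prop:draftXII}, there is no real obstacle in this argument; the only subtlety to flag is ensuring non-emptiness of $D^k(f_s^\RR)^\sigma$, which is either built into the formulation or follows from the fact that a singular $D^k(f)^\sigma$ contributes nontrivial alternating homology to the ICSS that must match on the real side by \cref{prop:draftXIII}.
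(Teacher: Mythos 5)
Your argument is correct and is exactly the paper's proof: the paper derives the corollary in one line from Poincaré duality together with \cref{prop:draftXII} and \cref{lem:manifold}, which is precisely the chain of deductions you spell out (including the reduction to a single connected closed oriented manifold via $\beta_0=1$). The only detail you add beyond the paper is the explicit discussion of non-emptiness and of why the hypotheses of \cref{lem:manifold} hold, which is a reasonable filling-in rather than a different route.
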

\begin{proof}
This follows by Poincaré duality, \cref{prop:draftXII,lem:manifold}.
\end{proof}

\begin{theorem}\label{thm:bettialtis1}
For every $k$ such that $d_k>0$,
\[A\beta_{d_k}\big( D^k(f_s)\big)=\begin{cases}
0 &\text{ if } D^k(f)=\varnothing \text{ or smooth},\\
1 &\text{ if } D^k(f) \text{ singular}.
\end{cases}\]
\end{theorem}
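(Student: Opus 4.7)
The plan is to split the proof into three cases, according to whether $D^k(f)$ is empty, smooth, or singular. The empty case is trivial since then $D^k(f_s)=\varnothing$, and the smooth case follows because $D^k(f_s)$ is the Milnor fiber of a smooth ICIS (see \cref{rem:propertiesDk}), hence contractible, which forces $A\beta_{d_k}(D^k(f_s))=0$.

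The substantial case is when $D^k(f)$ is singular. Here \cref{rem:propertiesDk} already gives the lower bound $A\beta_{d_k}(D^k(f_s))>0$, so the goal reduces to proving the upper bound $A\beta_{d_k}(D^k(f_s))\leq 1$. By \cref{prop:draftXIII} it is equivalent to work with $A\beta_{d_k}(D^k(f_s^\RR))$. Combining \cref{lem:manifold} with \cref{lem:orbits}, one sees that $D^k(f_s^\RR)$ is a disjoint union of finitely many copies $N_1,\ldots,N_m$ of a single closed oriented manifold of dimension $d_k$, and that $\Sigma_k$ permutes the components transitively.

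The final step is a representation-theoretic calculation. Letting $H=\operatorname{Stab}_{\Sigma_k}(N_1)\leq \Sigma_k$, the action of $H$ on the one-dimensional space $H_{d_k}(N_1;\QQ)$ is given by a character $\chi\colon H\to\{\pm 1\}$ that records orientation preservation. Since the components form a single $\Sigma_k$-orbit, one identifies
\[
H_{d_k}\big(D^k(f_s^\RR);\QQ\big)\cong \operatorname{Ind}_H^{\Sigma_k}\QQ_\chi
\]
as $\Sigma_k$-modules, and Frobenius reciprocity then yields
\[
\dim \operatorname{Hom}_{\Sigma_k}\big(\Alt,\operatorname{Ind}_H^{\Sigma_k}\QQ_\chi\big)=\dim \operatorname{Hom}_H\big(\Alt|_H,\QQ_\chi\big)\in\{0,1\},
\]
since both sides compare one-dimensional characters of $H$. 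Combined with \cref{rem:HaltToAH}, this forces $A\beta_{d_k}(D^k(f_s^\RR))\leq 1$, closing the argument.

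The main obstacle is the representation-theoretic identification above: the character $\chi$ is not a priori trivial, so one must carefully track how permutations in $\Sigma_k$ act on the orientations of the components $N_i$. Everything else is a formal consequence of the structural results already established in the previous sections.
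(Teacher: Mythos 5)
Your proposal is correct and follows essentially the same route as the paper: the empty/smooth cases from \cref{rem:propertiesDk}, and in the singular case the lower bound from \cite[Theorem 4.8]{GimenezConejero2022c} combined with \cref{prop:draftXIII}, \cref{lem:orbits} and \cref{lem:manifold} to reduce to the alternating part of the top homology of a single $\Sigma_k$-orbit of closed oriented manifolds. The only difference is cosmetic: the paper bounds that alternating part directly by observing that an alternating class is determined by its coefficient on one fundamental class, whereas you package the same count as the multiplicity of the sign character in $\operatorname{Ind}_H^{\Sigma_k}\QQ_\chi$ via Frobenius reciprocity.
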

\begin{proof}
By \cref{rem:propertiesDk} we have the first case. If $D^k(f_s)$ is singular and $d_k>0$ then $A\beta_{d_k}\big(D^k(f_s)\big)>0$ by \cite[Theorem 4.8]{GimenezConejero2022c} (again, see \cref{rem:propertiesDk}), so we are in the conditions of \cref{lem:orbits} and we know that $D^k(f_s^\RR)$ has a unique orbit which is a union of manifolds without boundary, by \cref{lem:manifold}. Hence, denoting $D\coloneqq D^k(f_s^\RR)=D_1\sqcup\dots\sqcup D_\theta,$
\[H_n(D,\ZZ)\cong\left\langle \left[D_1\right],\dots,\left[D_\theta\right]\right\rangle .\]
It is clear then that $AH_n(D,\ZZ)$ must be generated by one element
\[ 
\left[D_1\right]\pm\dots\pm\left[D_\theta\right].\qedhere\]
\end{proof}

The following two lemmas give a clear indication of how the proof of \cref{main} below will go.

\begin{lemma}\label{lem:paramain}
Assume that $D^k(f)$ is singular and every space $D^k(f)^\sigma$ has dimension $d_k^\sigma\geq-1$. Then, $\mu\big(D^k(f)^\sigma\big)=1$ for all $\sigma$ such that $d_k^\sigma\geq0$.
\end{lemma}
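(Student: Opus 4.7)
The plan is to combine Smith theory applied to each individual ICIS $D^k(f)^\sigma$ with a global counting identity coming from the alternating Euler characteristic formula \cref{lem:formulachitau}.

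By \cref{lem: dkgamma} and the hypothesis $d_k^\sigma \geq -1$, each space $D^k(f)^\sigma$ with $d_k^\sigma \geq 0$ is an ICIS, so its complex Milnor fiber $D^k(f_s)^\sigma$ is a bouquet of $\mu_\sigma := \mu(D^k(f)^\sigma)$ spheres of dimension $d_k^\sigma$, giving $\chi_{Top}(D^k(f_s)^\sigma) = 1 + (-1)^{d_k^\sigma}\mu_\sigma$. By \cref{prop:draftXI} the same equality holds for $\chi_{Top}(D^k(f_s^\RR)^\sigma)$. When $d_k^\sigma$ is odd, \cref{lem:manifold} identifies $D^k(f_s^\RR)^\sigma$ as a disjoint union of closed oriented odd-dimensional manifolds, whose Euler characteristic is zero, so $0 = 1 - \mu_\sigma$ immediately yields $\mu_\sigma = 1$. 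When $d_k^\sigma$ is even and positive, I will adapt the Smith argument of \cref{prop:iciscase}: by \cref{prop:draftXII} the odd Betti numbers of $D^k(f_s^\RR)^\sigma$ vanish, so the Smith inequality $1+\mu_\sigma \geq \sum_i \beta_i(D^k(f_s^\RR)^\sigma;\Fbb_2)$ is forced to be an equality by comparison with $\chi_{Top}$. Combined with Poincaré duality on the closed oriented manifold (yielding $\beta_0 = \beta_{d_k^\sigma} = c$, the number of components) and non-emptiness $c\geq 1$ forced by $\chi > 0$, this gives $\mu_\sigma \geq 1$.

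To close the argument, I will apply \cref{lem:formulachitau} to $M = D^k(f_s)$ with $\tau = \Alt$. Since $D^k(f)$ is singular, $\chi_\Alt(D^k(f_s)) = (-1)^{d_k}$ by \cref{thm:bettialtis1}. Using the identity $\sgn(\sigma)(-1)^{d_k^\sigma} = (-1)^{d_k}$, the fact that $\chi_{Top}(D^k(f_s)^\sigma) = 0$ when $d_k^\sigma = -1$ (empty stable fiber), and $\sum_\sigma\sgn(\sigma) = 0$ for $k\geq 2$, I obtain the counting identity
\[
\sum_{\sigma:\,d_k^\sigma\geq 0}\mu_\sigma = N, \qquad N := |\{\sigma\in\Sigma_k : d_k^\sigma \geq 0\}|.
\]
Combined with the lower bounds $\mu_\sigma \geq 1$ established above for every such $\sigma$, each $\mu_\sigma$ must equal exactly $1$.

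The main obstacle is the zero-dimensional case $d_k^\sigma = 0$, where Smith theory alone does not force $\mu_\sigma \geq 1$: a priori $D^k(f_s^\RR)^\sigma$ could be a single real point. I expect to overcome this by exploiting the centralizer $Z_\sigma\subset\Sigma_k$ action on $D^k(f_s^\RR)^\sigma$ (adapting \cref{lem:orbits}), or by applying the equivariant Smith theorem \cref{thm:STtau}, which together with the singularity of $D^k(f)$ rules out the smooth reduced point and forces the two-point structure needed for $\mu_\sigma = 1$.
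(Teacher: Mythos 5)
Your global counting step is the paper's own argument in slightly different bookkeeping: combining \cref{lem:formulachitau} (equivalently, \cite[Theorem 4.7]{GimenezConejero2022c}) with $\chi_\Alt\big(D^k(f_s)\big)=(-1)^{d_k}$ from \cref{thm:bettialtis1} does yield $\sum_{\sigma:\,d_k^\sigma\geq0}\mu\big(D^k(f)^\sigma\big)=N$, where $N$ is the number of such $\sigma$, and the lemma follows once each summand is bounded below by $1$. Your termwise bounds for $d_k^\sigma$ odd (vanishing Euler characteristic of a closed odd-dimensional manifold, via \cref{prop:draftXI,lem:manifold}) and for $d_k^\sigma$ even and positive (Smith theory plus Poincar\'e duality on the nonempty closed manifold $D^k(f_s^\RR)^\sigma$) are correct; this is a more ``real-geometric'' route to positivity than the one the paper takes.

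The gap is exactly the one you flag: $d_k^\sigma=0$. There you must exclude $\mu\big(D^k(f)^\sigma\big)=0$, i.e.\ that $D^k(f)^\sigma$ is a reduced point even though $D^k(f)$ is singular, and neither of your proposed remedies achieves this. \cref{prop:draftXI} only says the real fiber has $1+\mu_\sigma$ points, which is consistent with $\mu_\sigma=0$; \cref{prop:draftXII} is vacuous for even $d_k^\sigma$; the equivariant Smith theorem \cref{thm:STtau} compares the alternating homology of $D^k(f_s)$ with that of its real locus and says nothing about an individual zero-dimensional fixed space; and an orbit/centralizer argument in the style of \cref{lem:orbits} controls orbits of connected components of the top space, not the $\sigma$-fixed loci. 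The paper closes this case (and, more economically, all cases at once) by invoking \cite[Theorem 4.8]{GimenezConejero2022c}: when $D^k(f)$ is singular and no $D^k(f)^\sigma$ has negative expected dimension, every $D^k(f)^\sigma$ is itself a singular ICIS, hence has Milnor number at least one. This is a purely complex-analytic input that does not follow from the good-real-perturbation machinery you are using; without citing or proving it, your argument does not establish the lemma for the $\sigma$ with $d_k^\sigma=0$, and the pigeonhole step then fails.
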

\begin{proof}
Assume, for simplicity, that $d_k^\sigma\geq0$. Then, by \cite[Theorem 4.7]{GimenezConejero2022c} (which is derived from \cref{lem:formulachitau}),
\begin{align}
\mu^\Alt\big(D^k(f)\big)&=\frac{1}{k!}\left(\sum_{ d_k^\sigma\geq0}\mu\big(D^k(f)^\sigma\big)-\sum_{ d_k^\sigma<0}(-1)^{d_k^\sigma}\beta_0\big(D^k(f)^\sigma\big)\right)\\ \label{ec:paramain1}
 & =\frac{1}{k!}\sum_{ \sigma}\mu\big(D^k(f)^\sigma\big).
\end{align}
Recall that $D^k(f)^\sigma$ are ICIS by \cref{lem: dkgamma}. We also have by \cref{thm:bettialtis1} that the previous equation must be equal to one. Finally, since no space $D^k(f)^\sigma$ has negative expected dimension $d_k^\sigma$, all are singular by \cite[Theorem 4.8]{GimenezConejero2022c}, which implies that each summand in the previous equation is at least one. Since there are $k!$ terms, each Milnor number must be equal to one.

If $d_k^\sigma\geq-1$ the argument is similar, but now in \cref{ec:paramain1}, for each space $D^k(f)^\sigma$ with $d_k^\sigma=-1$, there is a term $-(-1)^{-1}\beta_0\big(D^k(f)^\sigma\big)=1$.
\end{proof}

\begin{lemma}\label{lem:paramain2}
If $D^k(f)$ is such that it is equivalent (i.e., by isomorphisms of local algebras) to a hypersurface, then $d_k^\sigma\geq -1$ for every $\sigma$.
\end{lemma}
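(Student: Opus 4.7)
The plan is to show directly that $d_k\geq k-2$; once this is known, $d_k^\sigma=d_k-k+\sigma^\num\geq(k-2)-(k-1)=-1$ for every $\sigma\in\Sigma_k$, since the number of cycles $\sigma^\num\geq 1$ with equality at a $k$-cycle. By \cref{rem: eqs Dk}, writing $f$ in corank one form $f(x,y)=(x,f_n,\dots,f_p)$ with $x\in\CC^{n-1}$ and $y\in\CC$, the germ $D^k(f)$ is cut out in $\CC^{n+k-1}$ (with coordinates $x,y_1,\dots,y_k$) by the $(k-1)(p-n+1)$ divided differences $[y_1,\dots,y_r]f_\ell$ for $2\leq r\leq k$ and $n\leq\ell\leq p$.

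The key point is that, because $D^k(f)$ is equivalent to a hypersurface, it is in particular non-empty, so the origin lies on it and every defining equation vanishes there. Using $[y_1,\dots,y_r]y^m=h_{m-r+1}(y_1,\dots,y_r)$, where $h_j$ is the complete homogeneous symmetric polynomial, the constant term of $[y_1,\dots,y_r]f_\ell$ is precisely the coefficient of $y^{r-1}$ in $f_\ell(0,y)$; hence the coefficients of $y,y^2,\dots,y^{k-1}$ in $f_\ell(0,y)$ must all vanish for every $\ell$. A direct expansion then shows that the linear part of $[y_1,\dots,y_r]f_\ell$ at the origin equals
\[
\alpha_\ell^{(r)}(y_1+\dots+y_r)+\sum_{i=1}^{n-1}\beta_{\ell,i}^{(r)}x_i,
\]
where $\alpha_\ell^{(r)}$ is the coefficient of $y^r$ in $f_\ell(0,y)$ and $\beta_{\ell,i}^{(r)}$ is the coefficient of $x_iy^{r-1}$ in $f_\ell$. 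By the previous observation $\alpha_\ell^{(r)}=0$ whenever $r<k$, so every such linear part lies in the $n$-dimensional subspace
\[
L\coloneqq \Span(x_1,\dots,x_{n-1},\,y_1+\dots+y_k)\subset\CC^{n+k-1}.
\]

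To finish, I would invoke the hypersurface assumption: since $D^k(f)$ is isomorphic as a local algebra to a hypersurface of dimension $d_k$, its Zariski tangent space at the origin has dimension at most $d_k+1$. Hence the rank of the linear parts of the defining equations in $\CC^{n+k-1}$ is at least $(n+k-1)-(d_k+1)=n+k-2-d_k$, while by the previous paragraph this rank is at most $\dim L=n$. Combining, $n+k-2-d_k\leq n$, that is, $d_k\geq k-2$, and the lemma follows. The main technical step is the explicit combinatorial computation of the linear parts of the divided differences via the identity $[y_1,\dots,y_r]y^m=h_{m-r+1}$ together with the vanishing from the non-emptiness of $D^k(f)$; the remaining dimension count is routine.
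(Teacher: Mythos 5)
Your proof is correct and follows essentially the same route as the paper's: both arguments rest on the observation that the linear parts of the $(p-n+1)(k-1)$ divided-difference equations span at most the $n$-dimensional space generated by $x_1,\dots,x_{n-1}$ and $y_1+\dots+y_k$, and both combine this with the hypersurface hypothesis (for you, $\dim T_0\leq d_k+1$; for the paper, "all but one equation can be eliminated") to force $d_k\geq k-2$, hence $d_k^\sigma\geq -1$. Your explicit computation via $[y_1,\dots,y_r]y^m=h_{m-r+1}$ cleanly justifies the step that the paper only asserts, namely that the symmetric variables contribute at most one dimension to the span of the linear parts.
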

\begin{proof}
The number of equations that defines $D^k(f)\subset \CC^{n+k-1}$ is $(p-n+1)(k-1)$ (recall \cref{rem: eqs Dk}). Since $D^k(f)$ is a hypersurface singularity, it is possible to find linear terms in the equations and eliminate all but one equation. More precisely, the matrix induced by the linear terms in the equations has rank $(p-n+1)(k-1)-1$. Furthermore, notice that we can assume that linear terms on the symmetric variables appear at most in one equation, after cancelling in other equations if necessary.

On the one hand, the smallest expected dimension $d_k^\sigma$ is attained with a maximal cycle in $\Sigma_k$ and it is
\[d_k-k+1=p-k(p-n+1)+1\]
Since we deal with monogerms, every space $D^k(f)^\sigma$ contains the origin. Therefore, if $k$ and $\sigma$ are such that $d_k^\sigma\leq-2$, then
\[p-k(p-n+1)+1\leq -2,\]
so
\[ k \geq \frac{p+3}{p-n+1}.\]
This would imply that the number of equations is at least
\[ (p-n+1)\left(\frac{p+3}{p-n+1}-1\right)=n+2. \]

On the other hand, each time we eliminate one equation we also eliminate one variable, there are $n-1$ non-symmetric variables and $k$ symmetric variables, but the symmetric variables appear as a linear term in only one equation. This shows that it is possible to eliminate at most $n$ equations, so $d_k^\sigma\geq -1$ as we wanted.
\end{proof}

\begin{theorem}\label{main}
If $d_k>0$ and $D^k(f)$ is singular, $\mu\big(D^k(f)\big)=1$.
\end{theorem}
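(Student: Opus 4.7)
The starting point is Theorem \ref{thm:bettialtis1}. Since $D^k(f)$ is singular with $d_k > 0$, we have $A\beta_{d_k}\big(D^k(f_s)\big) = 1$. Because $f$ has corank one, Lemma \ref{lem: dkgamma} ensures $D^k(f)$ is an ICIS whose Milnor fibre $D^k(f_s)$ is a bouquet of $d_k$-spheres, so the alternating homology is concentrated in middle degree and gives $\mu^\Alt\big(D^k(f)\big) = 1$.

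The plan is to follow the template of Lemma \ref{lem:paramain}: invoke the formula from \cite[Theorem 4.7]{GimenezConejero2022c} that appears there,
\[k! \cdot \mu^\Alt\big(D^k(f)\big) = \sum_{d_k^\sigma \geq 0} \mu\big(D^k(f)^\sigma\big) - \sum_{d_k^\sigma < 0} (-1)^{d_k^\sigma}\beta_0\big(D^k(f)^\sigma\big),\]
and show that each term on the right-hand side is forced to equal $1$ (plus a vanishing correction). Specialising to $\sigma = \id$ would then give $\mu\big(D^k(f)\big) = 1$. The lower bound $\mu\big(D^k(f)^\sigma\big)\geq 1$ whenever $D^k(f)^\sigma$ is singular with $d_k^\sigma \geq 0$ comes from \cref{rem:propertiesDk}, so the crux is the matching upper bound $\mu\big(D^k(f)^\sigma\big) \leq 1$.

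To obtain the upper bound I would adapt the Smith-theoretic argument of Proposition \ref{prop:iciscase} to each fixed-point ICIS $D^k(f)^\sigma$. By Theorem \ref{lem:manifold}, the real Milnor fibre $D^k(f_s^\RR)^\sigma$ is a disjoint union of oriented closed manifolds of dimension $d_k^\sigma$, so it satisfies Poincaré duality with $\beta_0 = \beta_{d_k^\sigma}$. Combining this with Corollary \ref{prop:draftXI} (which yields $\chi_{Top}\big(D^k(f_s)^\sigma\big) = \chi_{Top}\big(D^k(f_s^\RR)^\sigma\big)$), Corollary \ref{prop:draftXII} (parity vanishing of middle Betti numbers), and Lemma \ref{lem:orbits} (single orbit with an odd number of components, so $\beta_{d_k^\sigma}$ is pinned to that odd integer $\theta^\sigma$) one gets a computation of the right-hand Euler characteristic that, together with $\chi_{Top}\big(D^k(f_s)^\sigma\big) = 1 + (-1)^{d_k^\sigma}\mu\big(D^k(f)^\sigma\big)$, constrains $\mu\big(D^k(f)^\sigma\big)$.

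The clean case is $d_k^\sigma$ odd: Poincaré duality on the oriented closed manifold immediately gives $\chi_{Top}\big(D^k(f_s^\RR)^\sigma\big) = 0$, so $1 - \mu\big(D^k(f)^\sigma\big) = 0$ and $\mu\big(D^k(f)^\sigma\big) = 1$. The main obstacle is the $d_k^\sigma$ even case, where the Euler characteristic does not vanish and parity alone does not pin down $\mu$; here I would bring in the equivariant Smith inequality of Theorem \ref{thm:STtau} applied to the $\Sigma_k$-action on the pair $\big(D^k(f_s); D^k(f_s^\RR)\big)$, together with \cref{prop:draftXIII}'s equality of alternating Betti numbers, to rule out $\mu\big(D^k(f)^\sigma\big) \geq 2$. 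Once each $\mu\big(D^k(f)^\sigma\big) = 1$ is known, substituting back into the formula for $k! \cdot \mu^\Alt$ forces both the correction terms to be consistent and, specialising to $\sigma = \id$, yields the theorem. As a sanity check, the conclusion $\mu\big(D^k(f)\big) = 1$ is consistent with Lemmas \ref{lem:milnorone}, \ref{lem:paramain2} and \ref{lem:paramain}, which together form the hypersurface-ICIS picture in which everything collapses to $A_1$.
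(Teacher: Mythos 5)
Your skeleton matches the paper's: the odd case via Poincar\'e duality on the closed oriented real fibre together with $\chi_{Top}\big(D^k(f_s)^\sigma\big)=\chi_{Top}\big(D^k(f_s^\RR)^\sigma\big)$, and the even case via the character formula $k!\,\mu^\Alt\big(D^k(f)\big)=\sum_{d_k^\sigma\geq0}\mu\big(D^k(f)^\sigma\big)-\sum_{d_k^\sigma<0}(-1)^{d_k^\sigma}\beta_0\big(D^k(f)^\sigma\big)$ with $\mu^\Alt=1$ from \cref{thm:bettialtis1}. But there is a genuine gap exactly where you flag ``the main obstacle.'' The counting argument of \cref{lem:paramain} only works when every $\sigma$ has $d_k^\sigma\geq-1$: then every correction term is $+1$, all $k!$ summands are $\geq1$ and sum to $k!$, so each equals $1$. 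If some $\sigma$ has $d_k^\sigma\leq-2$ and even, its correction term is $-(-1)^{d_k^\sigma}\beta_0=-1$, the right-hand side is no longer a sum of $k!$ terms each bounded below by $1$, and you cannot conclude $\mu\big(D^k(f)^\sigma\big)=1$ for any $\sigma$, let alone $\sigma=\id$. Your proposed substitute --- applying the equivariant Smith inequality \cref{thm:STtau} to rule out $\mu\geq2$ in the even case --- is not an argument: \cref{thm:STtau} compares alternating Betti numbers of a space and its conjugation-fixed set and has already been spent in proving \cref{prop:draftXIII}; it gives no upper bound on the ordinary Milnor number of an individual fixed-point ICIS $D^k(f)^\sigma$ of even dimension. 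Indeed, for $d_k^\sigma$ even your own computation only yields $1+\mu=2\theta$ with $\theta$ the number of components of $D^k(f_s^\RR)^\sigma$, and nothing in your proposal pins $\theta=1$.

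The paper closes this hole by a different, essentially algebraic mechanism that your proposal only mentions as a ``sanity check.'' Supposing some $d_k^\sigma<-1$ (forcing $k\geq4$), one takes a transposition $(i\;j)$: since $d_k^{(i\,j)}=d_k-1$ is odd, \cref{cor:oddDk} and \cref{lem:milnorone} show $D^k(f)^{(i\,j)}$ is an $A_1$ hypersurface singularity. Its regular sequence is that of $D^k(f)$ plus the single linear equation $y_i-y_j=0$, which by symmetry of the divided-difference equations and $k\geq4$ is independent of the other linear parts; hence all but one equation can be eliminated for $D^k(f)$ itself, i.e.\ $D^k(f)$ is a hypersurface, and \cref{lem:paramain2} then forces $d_k^\sigma\geq-1$ for all $\sigma$ --- a contradiction. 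This reduction to \cref{lem:paramain} is the actual content of the even case, and it is absent from your argument.
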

\begin{proof}

Assume that $k$ is such that $D^k(f)$ is singular, so $\mu\big(D^k(f)\big)>0$. We show that necessarily $\mu\big(D^k(f)\big)=1$. If $d_k$ is odd then the result follows by \cref{cor:oddDk}.

Assume now that $d_k$ is even and positive. We can assume further that there are $\sigma$ such that $d_k^\sigma<-1$ (so $k\geq4$), otherwise the result also holds by \cref{lem:paramain}.

On the one hand, for a transposition $(i\; j)$, $D^k(f)^{(i\: j)}$ has Milnor number one by \cref{cor:oddDk}, since $d_k^{(i\: j)}$ is odd. By \cref{lem:milnorone}, $D^k(f)^{(i\: j)}$ is isomorphic to the $A_1$ hypersurface singularity.

The regular sequence that defines $D^k(f)^{(i\: j)}$ is that of $D^k(f)$ with an additional equation $y_i-y_j=0$, and all but one equations can be eliminated in $D^k(f)^{(i\: j)}$. Hence, so is the case in $D^k(f)$. Indeed, the term $y_i-y_j$ is linearly independent of all the other linear terms from the other equations, since the equations are symmetric in $y_1,\dots,y_k$ and $k\geq4$, so it is possible to eliminate all but one equation from the regular sequence of $D^k(f)$ as well. Now, the assumption of $d_k^\sigma<-1$ contradicts \cref{lem:paramain2}. The result follows from there.
\end{proof}

\begin{corollary}\label{cor:aftermain}
If $D^k(f)$ is singular and $d_k>0$, then $d_k^\sigma\geq-1$ for all $\sigma$. In particular, $d_k\geq k-2$.
\end{corollary}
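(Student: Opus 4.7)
The plan is to combine Theorem \ref{main} with the classification results already established to reduce the corollary to Lemma \ref{lem:paramain2}. Since $D^k(f)$ is singular and $d_k>0$, Theorem \ref{main} gives $\mu\big(D^k(f)\big)=1$. Then Lemma \ref{lem:milnorone} applies (note that $D^k(f)$ is an ICIS of positive dimension $d_k$ by \cref{lem: dkgamma}), so $D^k(f)$ is isomorphic to an $A_1$ hypersurface singularity, in particular equivalent to a hypersurface in the sense of local algebras.

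Once we know $D^k(f)$ is equivalent to a hypersurface, Lemma \ref{lem:paramain2} directly yields $d_k^\sigma \geq -1$ for every $\sigma \in \Sigma_k$, which is the first assertion.

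For the "in particular" part, I would specialize to the permutation that minimizes $\sigma^\num$: a $k$-cycle $\sigma$ has $\sigma^\num=1$, so
\[
d_k^\sigma = p - k(p-n) - k + 1 = d_k - k + 1.
\]
Applying the bound $d_k^\sigma \geq -1$ to this $\sigma$ gives $d_k - k + 1 \geq -1$, i.e. $d_k \geq k-2$, as desired.

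I do not expect any serious obstacle here: the work has already been done in Theorem \ref{main} and Lemma \ref{lem:paramain2}, and the corollary is essentially a bookkeeping consequence. The only mild subtlety is making sure Lemma \ref{lem:milnorone} is applicable, which it is because $D^k(f)$ is an ICIS of dimension $d_k>0$ by \cref{lem: dkgamma}\ref{iiPD}.
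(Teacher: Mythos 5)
Your proposal is correct and follows exactly the same route as the paper's own proof: \cref{main} gives Milnor number one, \cref{lem:milnorone} identifies $D^k(f)$ as an $A_1$ hypersurface singularity, \cref{lem:paramain2} yields $d_k^\sigma\geq-1$, and the maximal cycle gives $d_k\geq k-2$. Nothing to add.
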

\begin{proof}
By \cref{main}, $D^k(f)$ is an ICIS with Milnor number one, so it must be an $A_1$ singularity by \cref{lem:milnorone}. In particular, it is isomorphic to a hypersurface, so \cref{lem:paramain2} shows the result. The minimum $d_k^\sigma$ is obtained with a maximal cycle, in which case $d_k^\sigma=d_k-k+1$, so  $d_k\geq k-2$ as desired.
\end{proof}

\begin{corollary}\label{cor:aftermain2}
If some $D^\ell(f)$ is singular and $d_\ell>0$, then $D^k(f)=\varnothing$ for any $k$ such that $d_k< k-2$.
\end{corollary}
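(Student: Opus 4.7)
The plan is to argue by contradiction using \cref{cor:aftermain} as the main tool, both applied to $D^\ell(f)$ (to extract information from the hypothesis) and to $D^k(f)$ (to get a contradiction).

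First, I would show that necessarily $k>\ell$. Indeed, applying \cref{cor:aftermain} to the hypothesis gives $d_\ell\geq \ell-2$. Since $d_j=p-j(p-n)$ is strictly decreasing in $j$ (as $p-n\geq 1$), for $k\leq \ell$ we would have
\[d_k \;=\; d_\ell+(\ell-k)(p-n)\;\geq\; d_\ell+(\ell-k)\;\geq\; \ell-2+(\ell-k)\;\geq\; k-2,\]
contradicting the assumption $d_k<k-2$.

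Next, I would assume for contradiction that $D^k(f)\neq\varnothing$. Applying \cref{cor:aftermain} directly to $D^k(f)$: if $D^k(f)$ were singular with $d_k>0$, then $d_k\geq k-2$, contradicting $d_k<k-2$. So we are reduced to the cases where either $D^k(f)$ is a (non-empty) smooth ICIS with $d_k>0$, or $d_k\leq 0$. In the latter case, by \cref{lem: dkgamma}(ii)(b), $D^k(f)\subseteq S^k=\{(0,\dots,0)\}$, which combined with non-emptiness means exactly $D^k(f)=\{(0,\dots,0)\}$, i.e.\ the germ $f$ has multiplicity at least $k$ at the origin. The smooth case forces $D^k(f)$ to have trivial alternating homology.

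The remaining step, which is the main obstacle, is to rule out these two residual cases using the hypothesis that $D^\ell(f)$ is singular with $d_\ell>0$. The idea is to exploit the fact that, by \cref{cor:aftermain} together with \cref{main} and \cref{lem:milnorone}, $D^\ell(f)$ is isomorphic to the $A_1$ hypersurface singularity, and moreover $d_\ell^\sigma\geq -1$ for every $\sigma\in\Sigma_\ell$. Via the natural projection $\pi\colon D^k(f)\to D^\ell(f)$ and the iterative description of multiple-point spaces in corank one (in the spirit of \cref{rem: eqs Dk} and \cref{lem:paramain2}), the hypersurface structure of $D^\ell(f)$ propagates: the linear terms of the extra divided-difference equations that define $D^k(f)$ inside $D^\ell(f)\times\CC^{k-\ell}$ are independent from those already eliminated for $D^\ell(f)$. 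Counting defining equations against available variables in $\CC^{n+k-1}$ along the lines of \cref{lem:paramain2}, the condition $d_k<k-2$ would force $d_k^\sigma\leq -2$ for the maximal cycle $\sigma\in\Sigma_k$, which together with non-emptiness of $D^k(f)^\sigma$ (containing the origin when $D^k(f)\neq\varnothing$) is incompatible with \cref{lem: dkgamma}. This contradicts $D^k(f)\neq\varnothing$, completing the proof.
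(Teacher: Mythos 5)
Your first two reductions are sound and coincide with the start of the paper's argument: $k>\ell$ follows from applying \cref{cor:aftermain} to $D^\ell(f)$, and applying it to $D^k(f)$ rules out the case where $D^k(f)$ is singular with $d_k>0$. The gap is in your final step, on two counts. First, the contradiction you invoke does not exist: the second item of the Marar--Mond criterion, \cref{lem: dkgamma}, explicitly \emph{allows} $D^k(f)^\sigma$ to be a non-empty subset of $S^k$ when $d_k^\sigma<0$, so ``$d_k^\sigma\leq-2$ for the maximal cycle together with $0\in D^k(f)^\sigma$'' is perfectly consistent with $\Ascr$-finiteness. The actual obstruction in \cref{lem:paramain2} is that $D^k(f)$ \emph{itself} is equivalent to a hypersurface, which is exactly what you have not established in the two residual cases (a non-empty smooth ICIS of positive dimension, or $D^k(f)=\{0\}$ with $d_k\leq0$). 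Second, the ``propagation'' claim---that the linear parts of the extra divided-difference equations cutting $D^k(f)$ out of $D^\ell(f)\times\CC^{k-\ell}$ are independent of those already used for $D^\ell(f)$---is asserted, not proved, and nothing in \cref{rem: eqs Dk} or \cref{lem:paramain2} gives it to you.

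The missing ingredient is much simpler and is already quoted in \cref{rem:propertiesDk}: by \cite[Theorem 4.8]{GimenezConejero2022c}, once some $D^\ell(f)$ is singular, every multiple point space of multiplicity higher than $\ell$ is singular or empty. Since $k>\ell$, this disposes of both residual cases at once; combined with your (correct) use of \cref{cor:aftermain}, which forces $D^k(f)$ to be smooth or empty whenever $d_k<k-2$, it yields $D^k(f)=\varnothing$. This is the paper's two-line proof.
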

\begin{proof}
Indeed, by \cref{cor:aftermain}, all $D^k(f)$ with $d_k< k-2$ must be smooth or empty. Since $D^\ell(f)$ is singular, by \cite[Theorem 4.8]{GimenezConejero2022c}, all the multiple point spaces of multiplicity higher than $\ell$ are singular or empty.
\end{proof}

\section{Homotopy type}\label{sec:homotopy}

In this section we fix the notation of a germ $f:(\CC^n,0)\to(\CC^p,0)$ that is the complexification of a real map germ and it is $\Ascr$-finite, $n<p$ and $f$ is in the nice dimensions or of corank one. We also keep \cref{notation}.

This section is where we prove the conjecture of Cooper and Mond (for any corank), see \cite[Conjecture 4.1]{Cooper1998}, and generalize it to other dimensions but only in corank one.

\begin{conjecture}\label{conj:grp}
If $f:(\CC^n,0)\to(\CC^{n+1},0)$ admits a good real perturbation $f_s^\RR$, then the image of $f_s^\RR$ and of its complexification $f_s$ are homotopy equivalent.
\end{conjecture}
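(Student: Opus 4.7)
The plan is to promote the rank equalities of \cref{prop:draftXIII} to a homotopy statement by using the natural inclusion $\iota\colon\im(f_s^\RR)\hookrightarrow\im(f_s)$ coming from $\RR^{n+1}\subset\CC^{n+1}$, and then applying Whitehead's theorem. Two structural ingredients make this work in the range $p=n+1$. On the one hand, \cref{main} combined with \cref{lem:milnorone} shows that every singular $D^k(f)$ with $d_k>0$ is isomorphic to the $A_1$ hypersurface singularity, so $D^k(f_s)\simeq S^{d_k}$. On the other, $d_k+(k-1)=n$, so every potentially non-trivial entry of $E^1_{*,*}(f_s)$ lies on the single antidiagonal $p+q=n$ and the complex ICSS collapses at $E^1$, giving $H_n(\im(f_s))$ of rank $\sum_k A\beta_{d_k}(D^k(f_s))$ and all other reduced homology zero.

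The core step is to compare the two spectral sequences. The $\Sigma_k$-equivariant inclusions $D^k(f_s^\RR)\hookrightarrow D^k(f_s)$ commute with the projections $\pi$, so they induce a morphism $E^\bullet_{*,*}(f_s^\RR)\to E^\bullet_{*,*}(f_s)$ compatible with $\iota$. \cref{prop:draftXIII} provides rank equality on each $E^1$-entry for $k\geq 2$, while for $k=1$ both $D^1$'s are contractible; hence the real ICSS also collapses on the single antidiagonal $p+q=n$. I would then upgrade these rank equalities to $\iota$-induced isomorphisms by exhibiting an explicit alternating top class in $D^k(f_s^\RR)$ as the signed sum of its oriented connected components (given by \cref{lem:manifold} and \cref{lem:orbits}), and checking that under the inclusion it represents a generator of the rank-one group $AH_{d_k}(D^k(f_s))$ from \cref{thm:bettialtis1}. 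Functoriality of the ICSS then yields $\iota_*\colon H_*(\im(f_s^\RR))\xrightarrow{\cong}H_*(\im(f_s))$.

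To conclude via Whitehead's theorem it remains to check simple connectivity. For $n\geq 2$, the complex image is a finite CW complex whose only non-trivial reduced homology is concentrated in degree $n\geq 2$, and it is known to be homotopy equivalent to a wedge of $n$-spheres, hence simply connected; the identical conclusion for $\im(f_s^\RR)$ then follows either from the same structural description applied to the real picture or, more concretely, from a Seifert--van Kampen decomposition along the filtration $M^1(f_s^\RR)\supset M^2(f_s^\RR)\supset\cdots$ whose strata are simply connected by the sphere description above. The case $n=1$ is handled directly: both images are graphs, and the equality of Euler characteristics supplied by \cref{cor:GRPisCGRP} combined with the matching of $\pi_0$ is enough. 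The main obstacle I anticipate is the second step: although the numerical equalities of \cref{prop:draftXIII} are derived from Smith-theoretic machinery, lifting them to $\iota$-induced isomorphisms requires tracking alternating representatives at the chain level and verifying that no orientation or sign issue makes the prospective real generator become null-homologous in the complex ICIS.
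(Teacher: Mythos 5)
Your strategy---compare the two ICSSs via the inclusions $D^k(f_s^\RR)\hookrightarrow D^k(f_s)$ and finish with Whitehead---is essentially the one the paper reserves for the \emph{other} case, $p>n+1$ in corank one (\cref{thm:htequip} via \cref{lem:AHcornakone}); for $p=n+1$ the paper deliberately takes a different route, and your version has three genuine gaps. First, and most importantly, every tool you lean on to control the $E^1$-page and the top classes (\cref{main}, \cref{thm:bettialtis1}, \cref{lem:orbits}, \cref{lem:manifold}, the identification $D^k(f_s)\simeq S^{d_k}$) is a corank-one result resting on the Marar--Mond criterion \cref{lem: dkgamma}; the conjecture carries no corank hypothesis, and in higher corank the spaces $D^k(f_s)$ are not wedges of spheres, so your comparison already fails at $E^1$. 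The paper avoids this by combining Mond's Morse-theoretic \cref{lem:mond} (the inclusion of $\im f_s|_{\RR^{n+1}}$ into $\im f_s$ is a homotopy equivalence, available precisely because the image is a hypersurface) with the whisker splitting \cref{thm: icss splits} to rule out torsion in $H_*(\im f_s^\RR)$, an argument valid in any corank. Second, your argument for $\pi_1(\im f_s^\RR)=1$ does not go through: ``the same structural description applied to the real picture'' is circular (that the real image is a wedge of spheres is the conclusion, not a hypothesis), and Seifert--van Kampen along the closed filtration $M^1(f_s^\RR)\supset M^2(f_s^\RR)\supset\cdots$ is not a valid open decomposition, nor are the real strata known to be simply connected (excellence controls only their homology). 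The paper instead applies Houston's surjectivity result \cref{lem:houstonpi1} to $f_s^\RR$ on a ball, using $AH_0\big(D^2(f_s^\RR)\big)=0$, which follows from \cref{prop:draftXIII}.

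Third, the step you yourself flag---upgrading the rank equalities of \cref{prop:draftXIII} to \emph{inclusion-induced} isomorphisms on $AH_{d_k}$---is left open, and it is not cosmetic: a priori the signed sum of the real components could map to zero or to twice a generator of $AH_{d_k}\big(D^k(f_s)\big)\cong\ZZ$. The paper closes exactly this gap, in the corank-one setting where it is needed, by showing that the pair $\big(D^k(f_s),D^k(f_s^\RR)\big)$ is homeomorphic to $(TS^{d_k},S^{d_k})$ and producing a $\Sigma_k$-equivariant deformation retraction (\cref{lem:AHcornakone}), after which the comparison theorem for spectral sequences applies. If you restrict to corank one, your outline can be repaired by substituting \cref{lem:AHcornakone} for your ``explicit top class'' step and \cref{lem:houstonpi1} for your $\pi_1$ argument; as written, it neither covers the conjecture in general nor completes the corank-one case.
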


\subsection{The case $p=n+1$}
Let us address the case $p=n+1$ first. We use a lemma of Mond relating $f_s|_{\RR^p}$ and $f_s$, which is proved using well-known techniques of Morse theory.

\begin{lemma}[see {\cite[Lemma 2-2]{Mond1996}}]\label{lem:mond}
If $f_s^\RR$ is a good real perturbation of $f$, the inclusion of $\im f_s|_{\RR^p}$ in $\im f_s$ is a homotopy equivalence.
\end{lemma}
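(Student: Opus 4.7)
The plan is to construct a deformation retraction of $\im f_s$ onto $\im f_s|_{\RR^{n+1}} = \im f_s \cap \RR^{n+1}$ using Morse theory. Without loss of generality (using the $\Ascr$-equivalence between $f$ and $f^\CC$), I may take $f_s$ to be the complexification of $f_s^\RR$, so that $\im f_s \subset \CC^{n+1}$ is a hypersurface defined by a real analytic equation. Complex conjugation then acts on $\im f_s$ with fixed point set exactly $\im f_s \cap \RR^{n+1}$.

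The main tool is the squared imaginary part function $\phi \colon \CC^{n+1}\to\RR_{\geq 0}$ defined by $\phi(z)=\|\Im z\|^2$. Its restriction to $\im f_s$ is a proper, real-analytic, non-negative function whose zero set is precisely $\im f_s \cap \RR^{n+1}$. The idea is to flow along $-\nabla(\phi|_{\im f_s})$: this flow sends $\im f_s$ into itself and moves points monotonically toward the minimum locus. If I can show that $\phi|_{\im f_s}$ has no critical points outside its zero set, then the flow yields the desired deformation retract and the inclusion is a homotopy equivalence.

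To rule out additional critical points, I would combine the good real perturbation hypothesis with Morse-theoretic inequalities. By the chain of inequalities in \eqref{eq:chainbettis}, \cref{cor:GRPisCGRP}, and the (equivariant) Smith theory for the $\ZZ/2$-action by conjugation on $\im f_s$, the goodness assumption forces the equality
\[\sum_i \beta_i\big(\im f_s; \Fbb_2\big)=\sum_i \beta_i\big(\im f_s \cap \RR^{n+1}; \Fbb_2\big).\]
Since non-real critical points of $\phi|_{\im f_s}$ appear in conjugate pairs and each pair would contribute strictly to the Morse handle count of $\im f_s$ beyond what is needed to build $\im f_s \cap \RR^{n+1}$, the Morse inequalities together with the above equality would be violated. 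Hence no such critical points can exist, and the retraction is well-defined.

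The main obstacle is that $\im f_s$ is typically singular (along its multiple-point loci), so $\phi|_{\im f_s}$ is not literally a smooth Morse function. I would handle this either via stratified Morse theory in the sense of Goresky--MacPherson, tracking critical points on each stratum of a Whitney stratification of $\im f_s$ compatible with the conjugation action, or by perturbing $\phi$ equivariantly to a stratified Morse function and carefully accounting for singular strata; in both approaches the tight Smith-theoretic count remains the essential ingredient that excludes non-minimum critical points. Once this is done, the negative gradient flow produces the deformation retraction of $\im f_s$ onto $\im f_s|_{\RR^{n+1}}$, completing the proof.
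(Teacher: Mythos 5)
The paper does not actually prove this lemma: it is quoted from \cite[Lemma 2-2]{Mond1996}, and the authors only indicate that Mond's argument is Morse-theoretic, applied to the defining equation $g_s$ of the hypersurface $\im f_s\subset\CC^{n+1}$ --- the ambient ball is obtained up to homotopy by attaching $(n+1)$-cells to $\im f_s$ at the critical points of $g_s$, the real ball is obtained from $\im f_s\cap\RR^{n+1}$ by attaching the real parts of those same cells, and the conclusion follows by excision (this is precisely the scheme the paper reproduces in the proof of \cref{thm:iciscase}). Your proposal takes a different route, flowing along $-\nabla\big(\|\Im z\|^2\big|_{\im f_s}\big)$, and it contains a genuine gap.

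The gap is the step ``equality of total $\Fbb_2$-Betti numbers $\Rightarrow$ $\phi|_{\im f_s}$ has no critical points outside its zero set.'' No form of the Morse inequalities gives this: attaching a handle can either create or kill a homology class, so extra critical points need not change $\sum_i\beta_i$, and conversely $\sum_i\beta_i(X;\Fbb_2)=\sum_i\beta_i(X^G;\Fbb_2)$ does not force $H_*(X,X^G)=0$. A concrete counterexample to your implication: let $X=S^2$ with the reflection $(x,y,z)\mapsto(x,y,-z)$, whose fixed set is the equator $S^1$. Both total $\Fbb_2$-Betti numbers equal $2$, the two poles form a conjugate pair of critical points of the ``squared distance to the fixed locus'' function outside its zero set, and the inclusion $S^1\hookrightarrow S^2$ is not a homotopy equivalence. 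So the Smith-theoretic equality you extract from \eqref{eq:chainbettis} and \cref{cor:GRPisCGRP} is a necessary condition but nowhere near sufficient; the missing ingredient is degree-by-degree control (that $\im f_s$ is $(n-1)$-connected and that the contractible ambient ball is built from it by cells of the single index $n+1$), which is exactly what Mond's cell-attachment-plus-excision argument supplies and your counting argument does not. Note also that even if the Betti numbers of $\im f_s\cap\RR^{n+1}$ were pinned down degree by degree, one would still have to show that the \emph{inclusion} induces the isomorphism, which abstract equality of Betti numbers cannot give. The difficulties with stratified Morse theory that you flag are secondary by comparison: the argument already fails in the smooth model above.
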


We also need the following theorem by Houston, which gives a general result on fundamental groups.

\begin{lemma}[see {\cite[4.19]{Houston1997}}]\label{lem:houstonpi1}
Let $g:X\to Y$ be a proper and finite surjective stratified submersion. Assume that $X$ is path-connected and that there is a point $y\in g(X)$ with only one preimage $x$. If $H_0^\Alt\big(D^2(g)\big)\cong0$, then
\[g^*: \pi_1(X,x)\to \pi_1(Y,y)\]
is surjective.
\end{lemma}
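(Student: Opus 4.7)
The plan is to show $g_*$ is surjective by constructing, for any loop $\gamma$ in $Y$ based at $y$, a loop in $X$ based at $x$ whose image is homotopic to $\gamma$. The hypothesis $g^{-1}(y)=\{x\}$ is essential: any continuous lift of $\gamma$ to $X$ starting at $x$ must end at the unique preimage of $y$, so it is automatically a loop.

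I would first use that $g$ is a proper, finite, surjective stratified submersion to find a subdivision $0=t_0<t_1<\cdots<t_n=1$ so that each $\gamma|_{[t_i,t_{i+1}]}$ lies in an open set $V_i\subseteq Y$ over which $g^{-1}(V_i)$ splits into finitely many stratified branched pieces, each admitting a continuous local section. Starting from $x$ above $t_0$, I would inductively choose one local section on each $[t_i,t_{i+1}]$, yielding a piecewise lift whose pieces may fail to match at the $t_i$; each mismatch determines a \emph{jump} $(a_i,b_i)\in D^2(g)$, with $a_i$ the endpoint of the previous piece and $b_i$ the starting point of the next.

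The hypothesis $H_0^\Alt\big(D^2(g)\big)\cong 0$ is equivalent to every connected component of $D^2(g)$ being invariant under the coordinate-swap involution $\sigma$. For each jump this produces a path in $D^2(g)$ from $(a_i,b_i)$ to $(b_i,a_i)$, i.e., paths $\alpha_i,\beta_i$ in $X$ from $a_i$ to $b_i$ and from $b_i$ to $a_i$ respectively, satisfying $g\circ\alpha_i=g\circ\beta_i$. The plan is then to bridge the $i$-th jump by inserting $\alpha_i$ into the lift, which restores continuity at $t_i$ at the cost of inserting the loop $g\circ\alpha_i$ into $\gamma$; one tries to cancel these inserted loops by a compensating use of $\beta_i$, noting that the genuine loop $\alpha_i\cdot\beta_i$ in $X$ at $a_i$ projects under $g$ to $(g\circ\alpha_i)^2$, which is therefore visibly in the image of $g_*$.

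The main obstacle---and the apparent locus of the gap in Houston's original argument alluded to in \cref{sec:houston}---is carrying out all the bridging steps coherently and globally: one must ensure that the supports of distinct bridges do not interfere, and that the accumulated modifications of $\gamma$ really fit together into a null-homotopy in $Y$ modulo the image of $g_*$. The hypothesis $g^{-1}(y)=\{x\}$ at both endpoints is what ultimately forces the procedure to close up to a loop at $x$ rather than a path ending at a different preimage, but the detailed bookkeeping required to make this precise is delicate and seems to demand more than $H_0^\Alt\big(D^2(g)\big)\cong 0$ alone; I would expect a rigorous argument to either order the jumps and resolve them inductively with shrinking supports, or to replace the $\pi_0$-level hypothesis by a stronger control of $D^2(g)$ as a $\Sigma_2$-set.
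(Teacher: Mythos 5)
First, a point of comparison: the paper does not prove this lemma at all --- it is quoted from \cite[4.19]{Houston1997} and used as a black box --- so your proposal is being measured against Houston's argument rather than anything internal to the paper. Your skeleton (piecewise lifting via local sections, recording the mismatches as points of $D^2(g)$, and using $g^{-1}(y)=\{x\}$ to force the final lift to close up at $x$) is the right one, but the proof stalls exactly where you admit it does, and the reason is that you extract too little from the hypothesis. Your claim that $H_0^{\Alt}\big(D^2(g)\big)\cong 0$ is \emph{equivalent} to every component of $D^2(g)$ being swap-invariant is false: with integer coefficients, a swap-invariant component disjoint from the diagonal still contributes $\nicefrac{\ZZ}{2\ZZ}$ to $AH_0$ (and a non-invariant one contributes $\ZZ$). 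What the vanishing of $AH_0^{\Alt}\big(D^2(g)\big)$ actually yields is that \emph{every connected component of $D^2(g)$ meets the diagonal}, i.e.\ contains a point of the form $(c,c)$ fixed by the involution. This is strictly stronger than invariance, and it is the statement you need.

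That stronger consequence closes your gap without any delicate bookkeeping and without strengthening the hypothesis. For each jump $(a_i,b_i)$ choose a path $t\mapsto\big(\alpha_i(t),\beta_i(t)\big)$ in $D^2(g)$ from $(a_i,b_i)$ to a diagonal point $(c_i,c_i)$; since $D^2(g)$ lies in the fibre product $\{(x_1,x_2):g(x_1)=g(x_2)\}$, we have $g\circ\alpha_i=g\circ\beta_i$. Then $\alpha_i\cdot\overline{\beta_i}$ is a path in $X$ from $a_i$ to $b_i$ whose image under $g$ is $g\alpha_i\cdot\overline{g\alpha_i}$, which is null-homotopic rel endpoints in $Y$. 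Inserting these bridges at each $t_i$ therefore produces a continuous lift of a loop homotopic to $\gamma$, and there is nothing left to cancel --- you never face the problem that only $(g\alpha_i)^2$ is visibly in the image of $g_*$, which is the dead end your version of the bridge (a path from $(a_i,b_i)$ only to $(b_i,a_i)$) runs into. The unique preimage of $y$ then forces the lift to be a loop at $x$, giving surjectivity. Finally, your attribution of the difficulty to the error discussed in \cref{sec:houston} is misplaced: the false statement is \cite[Lemma 2.6]{Houston1997} as used in \cite[Theorem 5.5]{Houston2005}, and the paper explicitly notes that it does not affect the results of \cite{Houston1997} such as the lemma at hand.
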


\begin{theorem}\label{thm:htequin+1}
If $f_s^\RR$ is a good real perturbation of $f$, then $\im f_s^\RR$ is a deformation retract of $\im f_s$.
\end{theorem}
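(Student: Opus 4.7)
The plan is to factor the inclusion $\im f_s^\RR \hookrightarrow \im f_s$ through the intermediate space $\im f_s\rvert_{\RR^p}$, which is the fixed set of complex conjugation acting on $\im f_s$, and establish that both inclusions are homotopy equivalences. The outer inclusion $\im f_s\rvert_{\RR^p} \hookrightarrow \im f_s$ is precisely the content of Mond's Lemma \ref{lem:mond} under the GRP hypothesis, so the remaining task is to show that the inner inclusion $\im f_s^\RR \hookrightarrow \im f_s\rvert_{\RR^p}$ is also a homotopy equivalence.

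For the inner inclusion, I would first prove that it induces an isomorphism on homology. The chain-level decomposition underlying Theorem \ref{thm: icss splits} writes the ICSS of $f_s\rvert_{\RR^p}$ as a direct sum of the ICSSs of $f_s^\RR$ and $f_s\rvert_\Wcal$ in every column $p\geq 1$, and identifies the inclusion of images with the summand inclusion at $E^1$. Since $d_1$ is induced by the projections $D^k\to D^{k-1}$, and these projections take real preimages to real preimages and whisker preimages to whisker preimages, this direct summand structure is preserved on every subsequent page of the spectral sequence. Combining the GRP hypothesis with Corollary \ref{cor:GRPisCGRP} and Mond's lemma yields $\beta_i(\im f_s^\RR)=\beta_i(\im f_s\rvert_{\RR^p})$ for all $i$, so the summand inclusion is forced to be an isomorphism at $E^\infty$ and hence on homology.

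To promote this to a homotopy equivalence via Whitehead's theorem, I would next show both $\im f_s^\RR$ and $\im f_s\rvert_{\RR^p}$ are simply connected. Since the source of $f_s^\RR$ is a contractible real ball, applying Houston's Lemma \ref{lem:houstonpi1} yields $\pi_1(\im f_s^\RR)=0$ as soon as its hypothesis $H_0^\Alt\big(D^2(f_s^\RR)\big)=0$ is verified. This follows from Theorem \ref{prop:draftXIII} combined with Houston's concentration theorem \cite[Theorem 4.6]{Houston1997}, giving $AH_0\big(D^2(f_s)\big)=0$ whenever $d_2=n-1>0$; the planar case $n=1$ is classical. The same reasoning applied to $f_s$ itself gives $\pi_1(\im f_s)=0$, and hence via Mond's lemma $\pi_1(\im f_s\rvert_{\RR^p})=0$.

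Finally, since both images carry compatible CW structures coming from their semi-analytic nature, the homotopy equivalence upgrades to a deformation retract by \cite[Corollary 0.20]{Hatcher2002}, as in the proof of Theorem \ref{thm:iciscase}. The main technical obstacle I anticipate is to rigorously justify the persistence of the direct summand structure in the ICSS through all pages, and in particular to control the interaction with column $p=0$, where Theorem \ref{thm: icss splits} only provides an injection; this will require careful tracking of the $d_1$ differential on the (possibly overlapping) source components corresponding to $f_s^\RR$, $f_s\rvert_\Wcal$, and the non-real preimages of real image points.
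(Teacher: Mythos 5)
Your overall strategy coincides with the paper's: factor the inclusion through $\im f_s\rvert_{\RR^{n+1}}$, dispatch the outer inclusion with \cref{lem:mond}, use the ICSS splitting of \cref{thm: icss splits} for the inner inclusion, get simple connectivity from \cref{lem:houstonpi1} together with \cref{prop:draftXIII}, and finish with Whitehead and \cite[Corollary 0.20]{Hatcher2002}. However, there is a genuine gap in the homology step. From ``$E^1(f_s^\RR)$ is a direct summand of $E^1(f_s\rvert_{\RR^{n+1}})$'' together with equality of Betti numbers you conclude that the summand inclusion ``is forced to be an isomorphism at $E^\infty$ and hence on homology.'' Over $\ZZ$ this inference fails: Betti numbers only see ranks, so a complementary summand of rank zero could still be a nonzero torsion group, and equal Betti numbers do not preclude torsion in $H_*(\im f_s^\RR)$ that is absent from $H_*(\im f_s\rvert_{\RR^{n+1}})$. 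Ruling out this torsion is precisely the content of the paper's argument, and it requires input you have not supplied.

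Concretely, the paper splits the problem in two. For $0<i<n$ it observes that $\widetilde H_*(\im f_s\rvert_{\RR^{n+1}})$ is concentrated in degree $n$ and free there (by \cref{lem:mond}, since $\im f_s$ is a wedge of $n$-spheres), so every $E^\infty$ term of the larger spectral sequence on the diagonals $0<a+b<n$ vanishes; by the summand structure the same holds for $f_s^\RR$, and then the extension problems are vacuous, giving $H_i(\im f_s^\RR)=0$ (as a group, not just its rank) in that range. For $i=n$ the spectral sequence is \emph{not} used at all: instead one writes $\im f_s\rvert_{\RR^{n+1}}=\im f_s^\RR\cup \im f_s\rvert_{\Wcal}$ and invokes \cref{prop:whisk properties} to see that the whisker part has dimension $<n$, so the inclusion is an isomorphism on $H_n$ and in particular $H_n(\im f_s^\RR)$ is free. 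This last geometric ingredient is what your argument is missing; the obstacle you flag (persistence of the summand structure through the pages and the behaviour of column $0$) is comparatively benign, since the splitting already holds at the chain level and is compatible with $d_1$, and column $0$ is covered by the injectivity statement in \cref{thm: icss splits}. Your additional explicit verification that $\pi_1(\im f_s)=0$ is harmless and correct.
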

\begin{proof}
The statement is equivalent to having a homotopy equivalence between $\im f_s^\RR$ and $\im f_s$ by, for example, \cite[Corollary 0.20]{Hatcher2002} (see also \cite[Proposition 0.16]{Hatcher2002}). Hence, we focus on homotopy equivalences.
\newline

Observe that the case $n=1$ is trivial after \cref{lem:mond}, since $f_s^\RR=f_s|_\RR$ because there are no whiskers in dimension one by \cref{cor:whisk diagonal} (also \cref{prop:whisk properties}).

First, we show that the inclusion induces an isomorphism in homology,
\[H_*(\im f_s^\RR)\cong H_*( \im f_s).\]
Since the inclusion induces a homotopy equivalence between $\im f_s|_{\RR^{n+1}}$ and $\im f_s$ by \cref{lem:mond}, it suffices to show that the inclusion of $\im f_s^\RR$ in $\im f_s|_{\RR^{n+1}}$ is an isomorphism in homology. Since all the Betti numbers coincide (cf. \cref{cor:GRPisCGRP}) and there is no torsion in $H_*(\im f_s|_{\RR^{n+1}})$, we must prove that there is no torsion in $H_*(\im f_s^\RR)$ either. Indeed, we show that there is no extension problem in the spectral sequence. Recall that a spectral sequence could converge to non-isomorphic modules, it only determines a bigraded module obtained from a filtration,
\[ E^0_{p,q}\cong\frac{F^{p} H_{p+q}}{F^{p-1} H_{p+q}}.  \]
Hence, in good circumstances as our case (i.e., bounded filtration), one could obtain a limit $\hat{H}_*$ inductively from the (in general, non-unique) extensions
\[\begin{aligned}
0\to& E^{\infty}_{*,*}\to F^{0} \hat{H}_*\to 0\to 0, \text{ and}\\
0\to& E^\infty_{*,*} \to F^{p} \hat{H}_* \to F^{p-1} \hat{H}_* \to 0.
\end{aligned}\]
In corank one we can proceed from our results, we do it in \cref{subsec:p>n+1}, but we can give an argument for any corank when $p=n+1$. Since the ICSS of $f_s^\RR$ injects into the ICSS of $f_s|_{\RR^{n+1}}$ by the inclusion, see \cref{thm: icss splits}, and the reduced homology of $\im f_s|_{\RR^{n+1}}$ is trivial except in dimension $n$, where it is free, the limit of the ICSS of $f_s|_{\RR^{n+1}}$ must be trivial between the $n$-th and the $0$-th diagonals. Therefore, so is the case in the ICSS of $f_s^\RR$ and all the extensions to determine the limit are trivial in that range. Lastly, regarding $n$-th homology, since $\im f_s|_{\RR^{n+1}}=\im f_s^\RR \cup \im f_s|_{\Wcal}$ and $\im f_s|_{\Wcal}$ has dimension less than $n$ (by \cref{prop:whisk properties}), the $n$-th homologies must be isomorphic, hence free.

Finally, we only have to show that $\im f_s^\RR$ is simply connected, by Whitehead's theorem (e.g., Theorem 4.5 in \cite{Hatcher2002}). For this, we use Houston's result \cref{lem:houstonpi1} for $f_s^\RR$ from a topological ball $U\subset \RR^n$ to its image. The stratification by stable types provides the finite proper stratified submersion, and we also have points with only one preimage. It remains to show the condition on $D^2(f_s^\RR)$. Indeed, it follows from \cref{prop:draftXIII} (which holds in any corank) and the fact that $d_2>0$ (recall also the discussion in the beginning of \cref{sec:complexdef}).
\end{proof}

\subsection{A small correction}\label{sec:houston}
In \cite[Theorem 5.5]{Houston2005}, Houston showed a proof of the conjecture in dimensions $(2,3)$ that uses the following lemma from \cite{Houston1997}.

\begin{lemma}[see {\cite[Lemma 2.6]{Houston1997}}]
Suppose $\Sigma_k$ acts on a subcomplex $Y$ of $X^k$, and that $Y$ is the orbit $\Sigma_k Z$ of some path-connected $Z$.
\begin{enumerate}[label=(\roman*)]
	\item If $Z\cap Diag(\Sigma_k)=\varnothing$ and $\sigma Z\cap Z=\varnothing$ for all $\sigma\in\Sigma_k$, then $AH_0(Y)\cong\ZZ$.
   \item\label{it:iicounterexample} If $Z\cap Diag(\Sigma_k)=\varnothing$ and $\sigma Z\cap Z\neq\varnothing$ for some $\sigma\in\Sigma_k$, then $AH_0(Y)\cong\nicefrac{\ZZ}{2\ZZ}$.
   \item If $Z\cap Diag(\Sigma_k)\neq\varnothing$, then $AH_0(Y)\cong0$.
\end{enumerate}
\end{lemma}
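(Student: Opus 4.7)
The plan is to analyse the three cases by unpacking the definition of alternating chains: a 0-chain $c=\sum_x c_x[x]\in C_0(Y;\ZZ)$ lies in $C_0^\Alt(Y)$ exactly when $c_{\sigma x}=\sgn(\sigma)c_x$ for every $\sigma\in\Sigma_k$ and every $x$. In particular, every point fixed by an odd permutation must carry coefficient zero in any alternating 0-chain. This formal observation, combined with the orbit decomposition of $Y$, will do all the work in the three cases.

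For case (i), under the disjointness hypotheses $Y$ decomposes as $\bigsqcup_{\sigma\in\Sigma_k}\sigma Z$ with each $\sigma Z$ path-connected, so $H_0(Y)\cong\ZZ^{|\Sigma_k|}$ with $\Sigma_k$ permuting the basis freely; the alternating submodule is then generated by $\sum_\sigma\sgn(\sigma)[\sigma Z]$, yielding $AH_0(Y)\cong\ZZ$. For case (iii), if $p\in Z\cap\mathrm{Diag}(\Sigma_k)$ then $p$ is fixed by some nontrivial $\sigma$, whose cycle decomposition contains a cycle $(i_1\,i_2\,\dots\,i_r)$ with $r\geq 2$; the equalities $p_{i_1}=\dots=p_{i_r}$ forced by $\sigma p=p$ imply that the odd transposition $(i_1\,i_2)$ also fixes $p$. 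Hence every translate $\sigma Z$ contains a fixed point of an odd permutation, every alternating 0-chain must vanish at such points, and path-connectedness of each $\sigma Z$ then propagates the vanishing to give $AH_0(Y)\cong 0$.

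The delicate case, and the one where I expect the mistake in \cite{Houston1997} to lie, is (ii). The natural argument is: pick $z\in Z$ with $\sigma z'\in\sigma Z\cap Z$; path-connectedness of $Z$ gives $[z]=[\sigma z']=\sgn(\sigma)[z']$ in $AH_0$, which combined with $[z]=[z']$ produces the relation $(1-\sgn(\sigma))[z]=0$. When $\sgn(\sigma)=-1$ this is the desired 2-torsion, but when the only permutations $\sigma$ with $\sigma Z\cap Z\neq\varnothing$ happen to be \emph{even}, the relation is empty and $AH_0(Y)$ could a priori remain isomorphic to $\ZZ$. The hard part is exactly this parity issue: the statement tacitly assumes one can always reduce to an odd $\sigma$, which need not hold in general. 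I would therefore expect the section to exhibit a counterexample consisting of a path-connected $Z\subset X^k$ invariant under some even permutation but disjoint from every odd translate, and to propose a corrected version of (ii) that adds the hypothesis $\sgn(\sigma)=-1$, or equivalently, that splits the statement according to the parity of the stabilizer subgroup $\{\sigma\in\Sigma_k : \sigma Z\cap Z\neq\varnothing\}$.
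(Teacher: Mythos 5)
You have read the situation correctly: this lemma is quoted from Houston only in order to be refuted, and your diagnosis of where part (ii) breaks is exactly the paper's. Your arguments for (i) and (iii) are sound — in particular, your observation that any point of $\mathrm{Diag}(\Sigma_k)$ is automatically fixed by an odd transposition (since a fixed point of a nontrivial $\sigma$ has equal coordinates along some cycle of length $\geq 2$) is precisely why (iii) holds and why no subtlety with nontrivial even stabilizers can arise in $C_0^\Alt$. For (ii), the relation $(1-\sgn(\sigma))[z]=0$ is indeed vacuous when every $\sigma$ with $\sigma Z\cap Z\neq\varnothing$ is even, and the paper's \cref{ex:triangles} realizes exactly the configuration you predict: with $a=-1-i$, $b=1+i$, $c=-i$, take $Z\subset\CC^3$ to be the segment from $(a,b,c)$ to $(b,c,a)$, so that the only nontrivial element of $\Sigma_3$ carrying $Z$ to a translate meeting it is the even $3$-cycle. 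One checks that $Z\cap\mathrm{Diag}(\Sigma_3)=\varnothing$, that the six translates form two triangles on which $\partial C_1^\Alt(Y)=0$, and hence that $AH_0(Y)\cong C_0^\Alt(Y)\cong\ZZ\ncong\nicefrac{\ZZ}{2\ZZ}$. The only thing separating your proposal from the paper is that you stop at describing the expected shape of the counterexample rather than exhibiting one; to actually refute (ii) you must produce a concrete $Z$ and verify both that it avoids the diagonal and that no odd translate meets it, which is what the explicit choice of $a$, $b$, $c$ above accomplishes.
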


The reason to use this lemma is to prove that there is no torsion in $AH_*$ when trying to prove the conjecture. Unfortunately, this lemma is false, as we show now with a counterexample. However, it does not seem to affect the integrity of \cite{Houston1997} since what is actually used is \cite[Remark 2.7]{Houston1997}, which is true. On the other hand, it seems plausible that the mistake in \cite[Theorem 5.5]{Houston2005} can be fixed by adding a lemma that studies the torsion of $AH_*$ in $CW$-complexes of dimension one.

\begin{example}\label{ex:triangles}
To simplify the argument, we set $a\coloneqq -1-i$, $b\coloneqq 1+i$ and $c\coloneqq -i$. Take $X^k=\CC^3$ and $Z$ the segment from the point $(a,b,c)$ to $(b,c,a)$, so that $Y=\Sigma_3 Z$. It is easy to see that $Z$ satisfies \cref{it:iicounterexample} above. Also, it is not hard to show that $\partial C^\Alt_1(Y)=0$ (see \cref{fig:triangles}), hence, $AH_0(Y)\cong C^\Alt_0(Y)\cong\ZZ\ncong\nicefrac{\ZZ}{2\ZZ}$.
\end{example}

\begin{figure}
	\centering
		\includegraphics[width=0.75\textwidth]{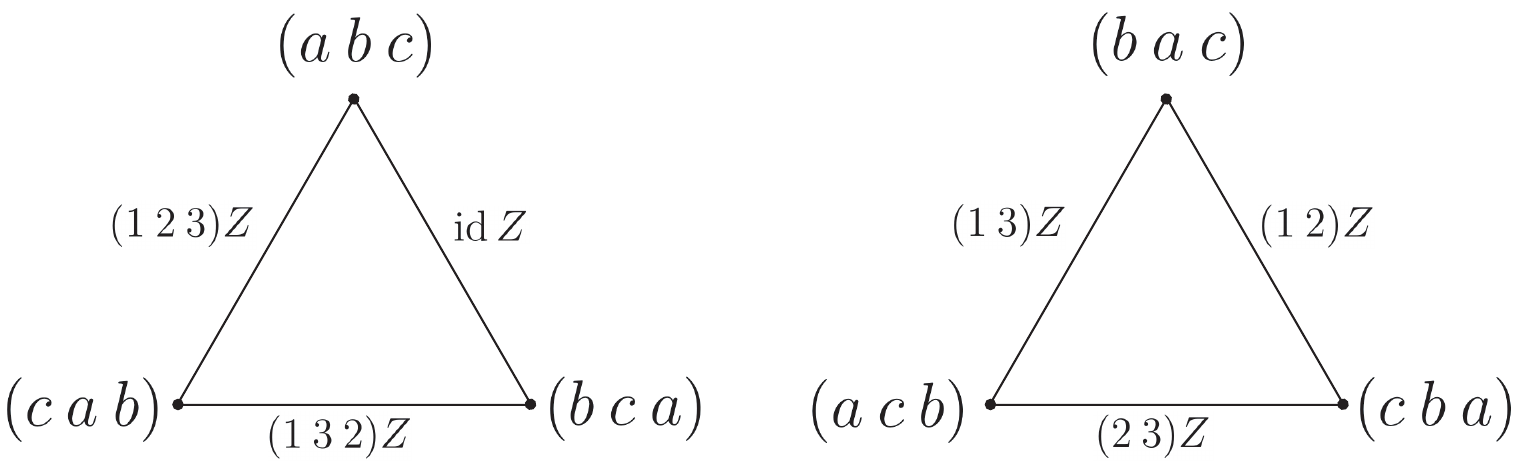}
	\caption{Schematic of \cref{ex:triangles}.}
	\label{fig:triangles}
\end{figure}

\subsection{The case $p>n+1$}\label{subsec:p>n+1}

The idea for the case $p>n+1$ is the same as the case $p=n+1$, with the exception that we do not have \cref{lem:mond}, nor any way of recovering it, since the images are not hypersurfaces any more. For this reason, we turn to the ICSS $E^1_{*,*}(f_s^\RR)$ and $E^1_{*,*}(f_s)$ from \cref{sec: ICSS}. This is also the technical reason we have to restrict to corank one map germs, in contrast with the previous case.

\begin{lemma}\label{lem:AHcornakone}
If $f_s^\RR$ is a good real perturbation of a corank one $\Ascr$-finite map  germ $f:(\CC^n,0)\to(\CC^p,0)$, $n<p$, then the inclusion of multiple point spaces induces the isomorphism
\[AH_*\big(D^k(f_s)\big)\cong AH_*\big(D^k(f_s^\RR)\big).\]
\end{lemma}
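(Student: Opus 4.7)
The plan is to upgrade the rational rank equality provided by \cref{prop:draftXIII} to an integral isomorphism induced by the $\Sigma_k$-equivariant inclusion $\iota\colon D^k(f_s^\RR)\hookrightarrow D^k(f_s)$. This inclusion is the componentwise restriction of $\RR^n\hookrightarrow\CC^n$, hence commutes with the action of $\Sigma_k$ on coordinates and passes to alternating chain complexes, giving a natural morphism $\iota_*\colon AH_*\big(D^k(f_s^\RR)\big)\to AH_*\big(D^k(f_s)\big)$.

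First I would record the target side. In the corank-one setting, $AH_*\big(D^k(f_s);\ZZ\big)$ is torsion-free and concentrated in the top degree $d_k$ by Houston's theorem (discussed in \cref{sec:complexdef}); by \cref{thm:bettialtis1} it is either $0$ (when $D^k(f)$ is smooth or empty) or $\ZZ$ (when $D^k(f)$ is singular, using \cref{main}). By \cref{prop:draftXIII}, the same rational ranks hold for $AH_*\big(D^k(f_s^\RR)\big)$, so the problem reduces to ruling out torsion in $AH_*\big(D^k(f_s^\RR);\ZZ\big)$ and checking that $\iota_*$ is nonzero in degree $d_k$.

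The bulk of the argument is the torsion-freeness of $AH_*\big(D^k(f_s^\RR);\ZZ\big)$. I would first apply equivariant Smith theory (\cref{thm:STtau}) with $\Fbb_2$ coefficients to the complex-conjugation action of $\ZZ/2\ZZ$, obtaining
\[\sum_i A\beta_i\big(D^k(f_s^\RR);\Fbb_2\big)\leq\sum_i A\beta_i\big(D^k(f_s);\Fbb_2\big)=\sum_i A\beta_i\big(D^k(f_s);\QQ\big),\]
where the last equality uses the absence of torsion in $AH_*\big(D^k(f_s);\ZZ\big)$. Combined with the universal coefficient inequality $A\beta_i\big(D^k(f_s^\RR);\Fbb_2\big)\geq A\beta_i\big(D^k(f_s^\RR);\QQ\big)$ and the rank equality from \cref{prop:draftXIII}, the chain collapses to equalities, so no $2$-torsion is present in $AH_*\big(D^k(f_s^\RR);\ZZ\big)$.

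The main obstacle will be ruling out odd-prime torsion, since $\ZZ/2\ZZ$-Smith theory only controls the prime $2$. For this I would exploit the structural results from \cref{sec: Pruebas}: by \cref{lem:manifold,lem:orbits}, $D^k(f_s^\RR)$ is a disjoint union of diffeomorphic closed oriented $d_k$-manifolds forming a single $\Sigma_k$-orbit; and by \cref{main} together with \cref{lem:milnorone}, the ICIS $D^k(f)$ has the local-algebra type of an $A_1$ hypersurface singularity, whose real smoothings are affine quadrics with torsion-free integral homology. Combining this with the vanishing of intermediate Betti numbers established in \cref{prop:draftXII,cor:oddDk} and Poincaré duality on each component promotes the rational vanishing of $AH_*\big(D^k(f_s^\RR)\big)$ in degrees $\neq d_k$ to integral vanishing, and pins down $AH_{d_k}\cong\ZZ$. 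Once both sides are identified as free abelian groups of the same rank concentrated in degree $d_k$, $\iota_*$ is an isomorphism because the fundamental class of a component of $D^k(f_s^\RR)$ maps with nonzero degree into the generator of $H_{d_k}\big(D^k(f_s)\big)\cong H_{d_k}(S^{d_k})$, and the alternating combination produces a generator of $AH_{d_k}\big(D^k(f_s)\big)$.
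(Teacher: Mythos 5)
Your reduction to ``rule out torsion in $AH_*\big(D^k(f_s^\RR);\ZZ\big)$'' is the right diagnosis, and your mod-$2$ step (equivariant Smith theory plus universal coefficients for the free chain complex $C_*^\Alt$) is sound. The gap is exactly where you anticipate trouble: odd-prime torsion. The argument you offer there does not work. Knowing that each component of $D^k(f_s^\RR)$ is a sphere with torsion-free integral homology, that the components form a single $\Sigma_k$-orbit, and that the intermediate ordinary Betti numbers vanish (\cref{prop:draftXII} only controls Betti numbers of the \emph{wrong parity}, by the way) gives you control of $H_*\big(D^k(f_s^\RR);\ZZ\big)$, but $AH_*$ is the homology of the alternating \emph{subcomplex}, and its torsion is created by the interaction of the $\Sigma_k$-action with the chains, not by the topology of the individual components: writing $D=\Sigma_k\cdot D_1$ with stabilizer $H$, one has $C_*^\Alt(D)\cong C_*^{\sgn|_H}(D_1)$, whose homology can acquire $p$-torsion at any prime $p$ dividing $|H|$ even when $D_1$ is a sphere. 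The paper's own \cref{sec:houston} (the counterexample to Houston's lemma) is precisely a warning that $AH_*$ of a union of pieces with trivial reduced homology is delicate over $\ZZ$; and \cref{thm:STtau} cannot help for odd $p$ because the conjugation group is a $2$-group. ``Poincaré duality on each component'' never enters the alternating subcomplex, so the promotion from rational to integral vanishing is unjustified. A secondary, smaller issue: in your last step you need the image of the real fundamental class to be a \emph{generator} of $H_{d_k}\big(D^k(f_s)\big)\cong\ZZ$, i.e.\ degree $\pm1$, not merely nonzero; this requires identifying a real component with the zero section of $TS^{d_k}$, which is essentially \cref{thm:iciscase} and should be cited rather than asserted.

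For contrast, the paper avoids the torsion question entirely: using \cref{main}, \cref{thm:bettialtis1} and \cref{thm:iciscase} it identifies the pair $\big(D^k(f_s),D^k(f_s^\RR)\big)$ with $(TS^{d_k},S^{d_k})$ and builds a deformation retraction from a retracting vector field, which is then $\Sigma_k$-averaged to make it equivariant. An equivariant retraction restricts to a chain homotopy equivalence of the alternating subcomplexes, so the isomorphism on $AH_*$ follows with arbitrary coefficients and no torsion computation. If you want to keep your comparison-of-both-sides strategy, you would need to replace the odd-torsion paragraph by an argument of this equivariant-retraction type (or by a direct computation of $C_*^{\sgn|_H}(D_1)$ for the sphere), since nothing you currently invoke constrains $AH_i\big(D^k(f_s^\RR);\ZZ\big)\otimes\ZZ/p$ for odd $p\leq k$.
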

\begin{proof}
We assume that $k>1$ and $d_k>0$, otherwise the result is trivial. We know from \cref{thm:iciscase,thm:bettialtis1,main} that $D^k(f_s)$ deformation retracts to $D^k(f_s^\RR)$. Indeed, the pair $\big(D^k(f_s),D^k(f_s^\RR)\big)$ is homeomorphic to the pair $(TS^{d_k} ,S^{d_k} )$, which can be seen from a standard argument (recall that $D^k(f_s)$ is an $A_1$ singularity, by \cref{main}, so $D^k(f_s)$ is a \textit{complex sphere}). The retraction can be obtained from the flow of a smooth vector field, simply by retracting the fibers of the tangent bundle $TS^{d_k}$, which can then be $\Sigma_k$-averaged to make it $\Sigma_k$-equivariant. This idea of $G$-averaging vector fields was used before in \cite[Section 2.2]{Houston1997}. If the retraction is $\Sigma_k$-equivariant the homologies coincide as desired.
\end{proof}

\begin{theorem}\label{thm:htequip}
In the conditions of \cref{lem:AHcornakone}, $\im f_s^\RR$ is a deformation retract of $\im f_s$.
\end{theorem}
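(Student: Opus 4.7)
The plan is to follow the same three-stage strategy as in Theorem~\ref{thm:htequin+1} (homology isomorphism, simple connectivity, Whitehead), but since Mond's Lemma~\ref{lem:mond} is no longer available, the homology isomorphism must be extracted directly from the ICSS, using the degeneration that corank one forces when $p>n+1$. The case $p=n+1$ is already covered by Theorem~\ref{thm:htequin+1}, so I would restrict attention to $p>n+1$.

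First, the inclusion of multiple point spaces $D^k(f_s^\RR)\hookrightarrow D^k(f_s)$ is $\Sigma_k$-equivariant and compatible with the projections $D^k\to D^{k-1}$, so it induces a morphism of ICSSs $E^r_{*,*}(f_s^\RR)\to E^r_{*,*}(f_s)$ that converges to the map on $H_*$ induced by $\im f_s^\RR\hookrightarrow \im f_s$. By Lemma~\ref{lem:AHcornakone} this morphism is an isomorphism on the $E^1$-page. In corank one the only possibly non-trivial entries of $E^1_{*,*}(f_s)$ are at positions $(k-1,d_k)$ (by Houston's theorem recalled in \cref{sec:complexdef}), and a direct computation shows that the differential $d_r$ out of $(k-1,d_k)$ targets $(k-1-r,d_k+r-1)$, whose non-triviality would force $r(p-n-1)=-1$, which is impossible for $r\ge 1$ and $p>n+1$. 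Hence $E^\infty=E^1$ in both spectral sequences. Moreover, since the points $(k-1,d_k)$ lie on pairwise distinct antidiagonals when $p>n+1$, the convergent filtration on each $H_n\bigl(\im(\cdot)\bigr)$ has at most one non-trivial associated graded piece, so no extension problem arises and the inclusion induces an isomorphism
\[
H_*\bigl(\im f_s^\RR\bigr)\xrightarrow{\ \cong\ } H_*\bigl(\im f_s\bigr).
\]

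For the fundamental group, I would apply Houston's Lemma~\ref{lem:houstonpi1} to $f_s^\RR\colon U\to \RR^p$ (with $U$ a small ball) and to $f_s$; both maps are proper finite stratified submersions on the complement of the non-immersive locus, both domains are path-connected, and the stable type stratification provides a point with a unique preimage. The required hypothesis $AH_0\bigl(D^2(f_s^\RR)\bigr)\cong 0$ follows from Lemma~\ref{lem:AHcornakone} together with Houston's vanishing $AH_0\bigl(D^2(f_s)\bigr)=0$ whenever $d_2>0$, i.e.\ whenever $p<2n$; the edge case $d_2\le 0$ is handled separately, since then $D^2(f_s)$ is either empty (trivial) or a finite set of double points whose contribution to $\pi_1$ is matched on both sides by the same argument. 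Therefore both $\im f_s^\RR$ and $\im f_s$ are simply connected, and Whitehead's theorem (e.g.\ \cite[Theorem 4.5]{Hatcher2002}) upgrades the homology isomorphism into a homotopy equivalence. Finally, \cite[Corollary 0.20]{Hatcher2002} converts this to a deformation retraction, exactly as at the end of the proof of Theorem~\ref{thm:htequin+1}.

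The main obstacle is the spectral sequence bookkeeping: one must simultaneously verify degeneration and the absence of any extension problem, and identify the surviving entries on each antidiagonal. The antidiagonal count in the $p>n+1$ regime makes both issues evaporate, so the argument reduces to a clean consequence of Lemma~\ref{lem:AHcornakone} together with the machinery already developed.
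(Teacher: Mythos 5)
Your proposal is correct and follows essentially the same route as the paper: \cref{lem:AHcornakone} gives an isomorphism of ICSSs on the $E^1$ page, both sequences degenerate there, the inclusion-induced map on $H_*$ is then an isomorphism, and the $\pi_1$ and Whitehead steps are recycled from \cref{thm:htequin+1}. The only difference is cosmetic: where you check degeneration and the absence of extension problems by hand via the antidiagonal count, the paper packages the same conclusion by invoking the comparison theorem for spectral sequences \cite[Comparison Theorem 5.2.12]{Weibel1994}, the substantive work in both versions being the verification (which you assert and the paper carries out) that the spectral-sequence morphism is compatible with the filtration on the abutment.
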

\begin{proof}
The proof follows the same steps of the proof of \cref{thm:htequin+1}. However, we need a different argument to show that the inclusion induces the isomorphism
\[H_*(\im f_s^\RR)\cong H_*( \im f_s).\]
We show now a topological argument, see also an argument that uses homological algebra in \cref{rem:homologicalalgebra}.
\newline

On the one hand, the inclusion $incl_{D^k}:D^k(f_s^\RR)\hookrightarrow D^k(\im f_s)$ induces an isomorphism of the corresponding spectral sequences at the first page by \cref{lem:AHcornakone}. 

On the other hand, this isomorphism of spectral sequences is \textsl{compatible} with $incl_f^*:H_*(\im f_s^\RR)\rightarrow H_*(\im f_s)$ in the sense of \cite[Comparison Theorem 5.2.12]{Weibel1994}, which implies that $incl_f^*$ is actually an isomorphism. 
To show this, we have to prove that $incl_f^*$ preserves the filtration and that its induced map
\begin{equation}\label{eq:inclf}
incl_f^*: \frac{F_pH_n(\im f_s^\RR)}{F_{p-1}H_n(\im f_s^\RR)}\rightarrow \frac{F_pH_n(\im f_s)}{F_{p-1}H_n(\im f_s)} 
\end{equation}
correspond to the map on the $E^\infty_{*,*}$ pages
\begin{equation}\label{eq:incldk}
incl_{D^k}^\infty: E^\infty_{*,*}(f_s^\RR) \rightarrow E^\infty_{*,*}(f_s) 
\end{equation}
that is induced from $incl_{D^k}$. 

Both spectral sequences collapse at the first page, so $E^1_{*,*}\cong E^\infty_{*,*}$ in both cases and the map $incl_{D^k}^\infty$ is the isomorphism from \cref{lem:AHcornakone}, which makes following the argument easier. First, observe that the inclusion $incl_f$ induces the inclusion $incl_{D^k}$, since we are simply taking the real part. In turn, $incl_f$ also induces the inclusion 
\begin{equation}\label{eq:chainincl}
incl_C:C^\Alt_*\big(D^k(f_s^\RR)\big)\hookrightarrow C^\Alt_*\big(D^k(f_s)\big).
\end{equation}
Since we can reduce to an inclusion at the chain level, checking the conditions we need is a lengthy but simple algebraic verification. Indeed, every morphism we need in the argument is induced from an inclusion. Possibly, the best way to verify the required conditions is by following the proof of \cite[Theorem 5.4]{Houston2007} having in mind that, with Houston's notation, $H$ is trivial and $\widetilde{X}$ is empty (cf. \cite{Goryunov1995}). There, a semi-simplicial resolution $W$ with a natural filtration (which gives the spectral sequence) is used. The inclusion also induces a map in these semi-simplicial resolutions, and it is easy to check that it respects the filtration. The correspondence between \cref{eq:inclf,eq:incldk} also follows from the chain morphisms, see an explicit form of the correspondence (also induced from inclusions) in the proof of \cite[Theorem 2.6]{McCleary2001}.
\end{proof}

\begin{remark}\label{rem:homologicalalgebra}
It is also possible to give a shorter proof of the preceding theorem using homological algebra. From the (filtered) chain morphism $incl_C$ in \cref{eq:chainincl}, we follow \cite[Exercise 5.4.4]{Weibel1994}. First, we construct the mapping cone $cone(incl_C)$ with a filtration such that the corresponding spectral sequence $E^r_{*,*}\big(cone(incl_C)\big)$ is the mapping cone of the morphism of spectral sequences $E^1_{*,*}(f_s^\RR)\rightarrow E^1_{*,*}(f_s)$. This gives a long exact sequence between these three spectral sequences. However, since $E^1_{*,*}(f_s^\RR)\rightarrow E^1_{*,*}(f_s)$ is an isomorphism by \cref{lem:AHcornakone}, $E^\infty_{*,*}\big(cone(incl_c)\big)$ is zero, which implies that $cone(incl_C)$ has trivial homology and that $incl_C$ is a quasi-isomorphism as we wanted.
\end{remark}

\section{Other real deformations}\label{sec:otherdeformations}
In this section, we consider $\Ascr$-finite map germs $f:(\CC^n,0)\to(\CC^p,0)$ of corank one, with $n<p$.
\newline

Going back to \cref{def:grp}, we have not dealt with excellent real deformations. The reason is simple, the easiest argument can be given after \cref{lem:AHcornakone}, were we indeed show that the multiple points of good real deformations $D^k(f_s^\RR)$, in corank one, are an \textsl{equivariant} deformation retract of the multiple points of the complex deformation $D^k(f_s)$. After that, it is obvious how to use the following theorem of Houston.

\begin{theorem}[see {\cite[Theorem 3.2]{Houston2002a}}]\label{thm:houstonMk}
Suppose that $g_i:X_i\to Y_i$, $i=1,2$, are finite and proper continuous maps for which the ICSS exist. Assume that there are continuous maps $\phi,\psi$ making the following diagram commutative,
\[\begin{tikzcd}
	{X_1} & {Y_1} \\
	{X_2} & {Y_2}
	\arrow["{g_1}", from=1-1, to=1-2]
	\arrow["\phi"', from=1-1, to=2-1]
	\arrow["\psi", from=1-2, to=2-2]
	\arrow["{g_2}", from=2-1, to=2-2]
\end{tikzcd}.\]
Then, if the induced map $\phi^k:D^k(g_1)\to D^k(g_2)$ from $\phi$ is an equivariant homotopy equivalence for all $k$, the restriction $\phi|:M_r(g_1)\to M_r(g_2)$ induces an isomorphism in integer homology for all $r$.
\end{theorem}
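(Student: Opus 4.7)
The plan is to mimic the strategy used for images in the proof of \cref{thm:htequip}, replacing the usual ICSS (which converges to $H_*(\im g)$) by a generalized spectral sequence converging to $H_*(M^r(g))$. Such a spectral sequence should be constructible in the spirit of \cref{thm: general icss}, with $E^1$-terms built from the alternating homology of the multiple point spaces $D^k(g)$ for $k\geq r$ and differentials induced by the natural projections $\pi\colon D^k(g)\to D^{k-1}(g)$; it arises from the filtration of a semi-simplicial resolution of $M^r(g)$ by pieces assembled from multiple point spaces (see the proof of \cite[Theorem 5.4]{Houston2007} for the analogous construction in the image case, and adapt it by stopping the filtration at multiplicity $r$).

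First I would verify naturality of the spectral sequence with respect to the hypothesized commutative square. The map $\phi\colon X_1\to X_2$ induces $\Sigma_k$-equivariant maps $\phi^k\colon D^k(g_1)\to D^k(g_2)$ on multiple point spaces, and the commutativity with $\psi$ on the target guarantees that these assemble into a morphism of the semi-simplicial resolutions which respects the filtration. This yields a morphism of spectral sequences whose induced map on the $E^1$-page is precisely $(\phi^k)_*\colon AH_*(D^k(g_1))\to AH_*(D^k(g_2))$ for each $k\geq r$, and whose abutment is the map $(\phi|)_*\colon H_*(M^r(g_1))\to H_*(M^r(g_2))$.

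By hypothesis, each $\phi^k$ is an equivariant homotopy equivalence; passing to alternating chains, this gives a quasi-isomorphism $C^{\Alt}_*(D^k(g_1))\to C^{\Alt}_*(D^k(g_2))$ and hence an isomorphism on $AH_*$. Consequently, the morphism of spectral sequences is an isomorphism on the entire $E^1$-page. Since both spectral sequences have bounded filtration (the multiple point spaces vanish for large $k$ by finiteness of $g_i$), the Comparison Theorem for spectral sequences, e.g.\ \cite[Comparison Theorem 5.2.12]{Weibel1994}, yields the desired isomorphism on the abutments.

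The main obstacle is of a bookkeeping nature: verifying that the map of spectral sequences genuinely corresponds to $(\phi|)_*$ on the abutment, i.e.\ matching the filtration-compatible chain map with the inclusion-style maps defining the spectral sequence (the same compatibility check we performed in the proof of \cref{thm:htequip}). This is a routine but tedious diagram chase through the semi-simplicial resolution. A cleaner alternative, parallel to \cref{rem:homologicalalgebra}, would be to form the mapping cone of the filtered chain map at the chain level: the isomorphism on $E^1$-pages forces the cone's spectral sequence to collapse to zero, so the cone is acyclic and the chain map is a filtered quasi-isomorphism, which directly implies the isomorphism in homology of $M^r$.
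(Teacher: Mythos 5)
This statement is not proved in the paper at all: it is quoted verbatim from Houston (\cite[Theorem 3.2]{Houston2002a}) and used as a black box in the proof of \cref{prop:good=excellent}, so there is no internal proof to compare against. Your reconstruction follows the route of the cited source — a spectral sequence for $M^r(g)$ built from the (truncated) semi-simplicial resolution, naturality of that resolution under the commutative square, the fact that equivariant homotopy equivalences induce isomorphisms on the relevant alternating chain homology, and the bounded-filtration comparison theorem — and the argument is sound in outline. One small imprecision worth noting: the $E^1$-page of the spectral sequence converging to $H_*\big(M^r(g)\big)$ is not the full $\Sigma_k$-alternating homology $AH_*\big(D^k(g)\big)$ for $k\geq r$, but a partially alternated homology (alternating only with respect to the subgroup of $\Sigma_k$ permuting the ``extra'' copies beyond the first $r$). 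This does not affect your argument, since these groups are still functorial in the equivariant homotopy type of the $D^k$'s, so the hypothesis that each $\phi^k$ is an equivariant homotopy equivalence still forces an isomorphism of $E^1$-pages; but the statement of the $E^1$-term should be corrected if you write this out in full.
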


\begin{proposition}\label{prop:good=excellent}
Good real perturbations are also excellent real perturbations.
\end{proposition}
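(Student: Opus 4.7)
The plan is to invoke Houston's Theorem \ref{thm:houstonMk} with $g_1 = f_s^\RR$ and $g_2 = f_s$, taking $\phi$ and $\psi$ to be the natural inclusions of the real source and target into their complexifications. The commutativity of Houston's diagram is tautological since $f^\RR$ complexifies to (a map $\Ascr$-equivalent to) $f$, and both maps are finite and proper so that their ICSS exists. The induced map on multiple point spaces $\phi^k$ is then just the inclusion $D^k(f_s^\RR) \hookrightarrow D^k(f_s)$.

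The substantive point is verifying that $\phi^k$ is a $\Sigma_k$-equivariant homotopy equivalence for every $k \geq 1$. In the case $d_k > 0$ with $D^k(f)$ singular, this is precisely what the proof of \cref{lem:AHcornakone} already produces: \cref{main} forces $D^k(f)$ to be an $A_1$ singularity, \cref{thm:iciscase} identifies the pair $\bigl(D^k(f_s), D^k(f_s^\RR)\bigr)$ with $(TS^{d_k}, S^{d_k})$, and the radial retraction of the tangent bundle onto its zero section is $\Sigma_k$-averaged to become equivariant. In the other positive-dimensional cases (where $D^k(f)$ is smooth or empty), the two multiple point spaces are either simultaneously empty or simultaneously equivariantly contractible balls, so the equivariant homotopy equivalence is immediate.

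The main obstacle is the discrete case $d_k = 0$, where one must show that $D^k(f_s^\RR)$ contains, $\Sigma_k$-equivariantly, the same finite set of points as $D^k(f_s)$. This is the statement, announced in the introduction and proved elsewhere in this section, that good real perturbations in corank one are also $M$-perturbations; it follows from the agreement of alternating zeroth Betti numbers furnished by \cref{prop:draftXIII} combined with the equivariant Smith theory of \cref{thm:STtau}, which together force every $\Sigma_k$-orbit of complex multiple points to survive in the real perturbation. With the equivariant homotopy equivalence in hand for every $k$, \cref{thm:houstonMk} yields $H_*\bigl(M^k(f_s^\RR)\bigr) \cong H_*\bigl(M^k(f_s)\bigr)$ for all $k$, which is exactly the defining condition of an excellent real perturbation.
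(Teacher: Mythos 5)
Your argument is correct and is essentially the paper's own proof, which simply combines \cref{lem:AHcornakone} with \cref{thm:houstonMk} applied to the inclusion. Your extra attention to the zero-dimensional case $d_k=0$ is welcome (the paper glosses over it); note only that the cleanest input there is \cref{prop:draftXI} rather than \cref{prop:draftXIII} plus Smith theory, since for $\sigma=\id$ it equates the cardinalities of the finite sets $D^k(f_s)$ and $D^k(f_s^\RR)$ and thus forces the inclusion to be an equivariant bijection.
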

\begin{proof}
This follows from \cref{lem:AHcornakone} and using \cref{thm:houstonMk} with the inclusion.
\end{proof}

We turn now to M-perturbations (recall \cref{def:Mperturbation,fig:grps1h2}). We need a previous key observation.

\begin{remark}\label{rem:typesdim0}
For a fixed pair of dimensions, the stable singularity types of dimension zero (in corank one) are in bijection with multiple point spaces $D^k(\bullet)^\sigma$ with $d_k^\sigma=0$. This can easily seen from the classification given, for example, in \cite[Proposition 3.4]{OsetSinha2022}: all the stable singularities of dimension zero are a generalization of the cross-cap or other stable multi-germs of higher dimension intersecting in a convenient way (with isosingular locus in general position), which determine a permutation $\sigma$ and a $k$ such that $d_k^\sigma=0$; and vice-versa.
\end{remark}

\begin{proposition}
Good real perturbations are also $M$-perturbations.
\end{proposition}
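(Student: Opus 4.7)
The plan is to reduce the statement to a direct consequence of the topological Euler characteristic computation already carried out in \cref{prop:draftXI}, via the dictionary between zero-dimensional stable singularities and fixed-point spaces of multiple points furnished by \cref{rem:typesdim0}.

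First, I would fix a conjugacy class $[\sigma]\subseteq\Sigma_k$ with $d_k^\sigma=0$; by \cref{rem:typesdim0}, such data enumerates the zero-dimensional stable singularity types appearing in any corank one stable perturbation in the given dimensions. For a stable perturbation $g$ of $f$ (either $f_s$ or $f_s^\RR$), the number of stable singularities of the corresponding type in $\im(g)$ equals the cardinality of $D^k(g)^\sigma$ divided by a purely combinatorial factor $c_\sigma$ depending only on $[\sigma]$ (namely, the size of the $\Sigma_k$-orbit of a generic point of $D^k(g)^\sigma$, or equivalently an index of the centralizer of $\sigma$ in $\Sigma_k$). Crucially $c_\sigma$ does not depend on whether we work over $\CC$ or $\RR$, since the $\Sigma_k$-action on $D^k(g)^\sigma$ is the restriction of the permutation action on the ambient $(\CC^n)^k$, which commutes with complex conjugation.

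Next, the assumption $d_k^\sigma=0$ makes both $D^k(f_s)^\sigma$ and $D^k(f_s^\RR)^\sigma$ zero-dimensional (and compact, by taking representatives inside a small Milnor ball), so their topological Euler characteristics coincide with their number of points. Then \cref{prop:draftXI} immediately yields
\[
\#\,D^k(f_s)^\sigma = \chi_{Top}\bigl(D^k(f_s)^\sigma\bigr) = \chi_{Top}\bigl(D^k(f_s^\RR)^\sigma\bigr) = \#\,D^k(f_s^\RR)^\sigma.
\]
Dividing by $c_\sigma$ and summing over all conjugacy classes $[\sigma]$ with $d_k^\sigma=0$ and over all $k$ then gives the required agreement of zero-dimensional stable singularities of $f_s$ and $f_s^\RR$, which is exactly the definition of $M$-perturbation in \cref{def:Mperturbation}.

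The only subtlety I would be careful about is the combinatorial bookkeeping in the middle step: verifying that the real count of image singularities is genuinely obtained as $\#\,D^k(f_s^\RR)^\sigma / c_\sigma$, with the same denominator as the complex case. This follows because the $\Sigma_k$-orbit of a real tuple fixed by $\sigma$ consists entirely of real tuples (the action is by permutation, defined over $\RR$), so the orbit-stabilizer count is insensitive to the ground field; no contribution from whiskers can arise at zero-dimensional strata, since whiskers have positive codimension in $\im(f_s)$ by \cref{prop:whisk properties} and the isolated singularities we count correspond to the deepest stable strata. Once this is noted, no further input beyond \cref{prop:draftXI} and \cref{rem:typesdim0} is needed.
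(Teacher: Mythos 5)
Your proposal is correct and follows essentially the same route as the paper: the paper's proof is precisely the combination of \cref{prop:draftXI} applied when $d_k^\sigma=0$ (where $\chi_{Top}$ is just a point count) with the dictionary of \cref{rem:typesdim0}. You merely make explicit the orbit-counting factor $c_\sigma$ and the field-independence of the $\Sigma_k$-action, details the paper leaves implicit.
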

\begin{proof}
We need to show that any good real perturbation has the same stable singularities of dimension zero. By \cref{prop:draftXI},
\[ \chi_{Top}\big(D^k(f_s)^\sigma\big)=\chi_{Top}\big(D^k(f_s^\RR)^\sigma\big).\]
This, when $d_k^\sigma=0$, and \cref{rem:typesdim0} show the result.
\end{proof}

\begin{remark}
It is known that the converse of the previous result is not true: not every germ that has an $M$-perturbation has a good real perturbation. It is shown in \cite[Theorem 2.1]{Rieger2008a} that all germs $(\CC^n,0)\to(\CC^{n+1},0)$ of corank one that are simple, $n\neq 4$, have an $M$-deformation, but not all of them have good real perturbations: see \cite{Marar1996} for $n=2$ and  \cref{sec:C3C4} below for $n=3$.
\end{remark}

\section{Germs from $\CC^3$ to $\CC^4$}\label{sec:C3C4}

Our results allows us to know what germs do not have good real picture in a practical way, and for a given map germ it is easy to find its good real perturbation if it has it. We show this with the classification of corank one germs from $(\CC^3,0)$ to $(\CC^4,0)$ shown in \cite{Houston1999a}, see \cref{tab:simplec3c4,tab:nonsimplec3c4}.

\begin{table}
\scalebox{0.8}{
\begin{tabular}{l|l|cc|cc|c}
Name                           &$f(x,y,z)=(x,y,\bullet,\bullet)   $                          & $\mu(D^2)$ & $\mu(D^3)$ & $\Ascr_e$-codim         & $\mu_I$         & Condition                                          \\ \hline
 $A_k$     & $\big(x,y,z^2,z(z^2+x^2+y^{k+1})\big)$    & $k    $      & -          & \multicolumn{2}{c|}{$k$}                     & $k\geq1$                                           \\
 $D_k$     & $\big(x,y,z^2,z(z^2+x^2y+y^{k-1})\big)$   &$ k     $    & -          & \multicolumn{2}{c|}{$k$}                     & $k\geq4$                                           \\
 $E_6$     & $\big(x,y,z^2,z(z^2+x^3+y^4)\big)$        &$ 6     $     & -          & \multicolumn{2}{c|}{$6$}                     & -                                                  \\
 $E_7$     & $\big(x,y,z^2,z(z^2+x^3+xy^3)\big)$       &$ 7    $     & -          & \multicolumn{2}{c|}{$7$}                     & -                                                  \\
 $E_8$     & $\big(x,y,z^2,z(z^2+x^3+y^5)\big)$        &$ 8     $     & -          & \multicolumn{2}{c|}{$8$}                     & -                                                  \\
 $B_k$     & $\big(x,y,z^2,z(x^2+y^2+z^{2k})\big)$     &$ 2k-1  $     & -          & \multicolumn{2}{c|}{$k$}                     & $k\geq2$                                           \\
 $C_k$     & $\big(x,y,z^2,z(x^2+yz^2+y^k)\big)$       &$ k+1     $   & -          & \multicolumn{2}{c|}{$k$}                     & $k\geq3$                                           \\
 $F_4$     & $\big(x,y,z^2,z(x^2+y^3+z^4)\big)$        &$ 6      $    & -          & \multicolumn{2}{c|}{$4$}                     & -                                                  \\
 $P_3^k$   & $(x,y,yz+z^6+z^{3k+2},xz+z^3)$            &$ 0        $  &$ 6k+1 $      &$ k+1         $            & $k+2       $      & $k\geq2$                                           \\
 $P_4^1$   & $(x,y,yz+z^7+z^8,xz+z^3)$                 &$ 0       $   &$ 16 $        & 4                       & 5               & -                                                  \\
 $P_k$     & $(x,y,yz+z^{k+3},xz+z^3)$                 &$ 0      $    &$ k^2   $     & \multicolumn{2}{c|}{$\frac{1}{6}(k+1)(k+2)$} & $k\geq1$, $3\nmid k$                               \\
 $Q_k$     & $(x,y,xz+yz^2,z^3+y^kz)$                  & $k-1     $   & $1$          & \multicolumn{2}{c|}{$k$}                     & $k\geq2$                                           \\
 $R_k$     & $(x,y,xz+z^3,yz^2+z^4+z^{2k-1})$          & $2k-3    $   &$ 4   $       &$ k  $                     & $k+1 $            & $k\geq3$                                           \\
 $S_{j,k}$ & $(x,y,xz+y^2z^2+z^{3j+2},z^3+y^kz)$       &$ k-1   $     & $6j+1   $    & $k+j$                     & $k+j+1 $          & $j\geq1,k\geq2$      
\end{tabular}
}
\caption{Simple corank one singularities, $(\CC^3,0)\to(\CC^4,0)$. None have quadruple points.}
\label{tab:simplec3c4}
\end{table}

On the one hand, we know that a germ that has a good real perturbation must have multiple point spaces so that $\mu\big(D^k\big)=0,1$ if the dimension $d_k$ is positive, by \cref{main}. In these dimensions $d_2=2$ and $d_3=1$, so we know that from the list of simple singularities in \cref{tab:simplec3c4} only $A_1, P_1,$ and $Q_2$ may have good real perturbations. Indeed, we already know that $A_1$ and $P_1$ have good real perturbations because they are corank one germs with $\Ascr_e$-codimension one in dimensions $(n,n+1)$, a class that is known to have good real perturbations by \cite[Theorem 7.3]{Cooper2002}. The germ $Q_2$ presents a behaviour that is not possible to see in lower dimensions, it has two multiple point spaces of positive dimension with Milnor number one. 

On the other hand, we know from the combination of \cref{thm:iciscase,prop:draftXIII}, that the spaces $D^2$ and $D^3$ of a good real perturbation must be spheres. All things together makes very easy to find the good real perturbation:
\[Q_{2,s}^\RR(x,y,z)=(x,y, xz+yz^2,z^3+y^2z-sz).\]
Observe that the equations of the respective multiple point spaces are given by divided differences (see \cref{rem: eqs Dk}):
\[\begin{aligned}
D^2(Q_{2,s}^{(\RR)})=&\Vcal\left( x+y(z_1+z_2);\ z_1^2+z_1z_2+z_2^2+y^2-s \right)\subset (\CC^4,0), \\
D^3(Q_{2,s}^{(\RR)})=&\Vcal\left( x+y(z_1+z_2);\ z_1^2+z_1z_2+z_2^2+y^2-s;\ y;\ z_1+z_2+z_3  \right)\subset (\CC^5,0).
\end{aligned}\]
To show that $Q_{2,s}^{\RR}$ gives a good real perturbation, we just check that the alternating homology of both spaces coincides in the complex and in the real case.
It is easy to see that they are smooth in the complex case, so it is indeed a stable perturbation by the Marar-Mond criterion \cref{lem: dkgamma}. 
Moreover, both complex spaces are the Milnor fiber of a Morse singularity, hence have the homotopy type of an $S^2$ and an $S^1$ respectively. It is easy to see that the real spaces are, respectively, an $S^2$ and an $S^1$ after a change of coordinates.
Finally, to show that the alternating homology is the same, we can use \cref{lem:formulachitau} (cf. \cref{eq:preprint} and \cite[Theorem 4.7]{GimenezConejero2022c}):
\[(-1)^{d_k}\chi_\Alt(D^k)=\frac{1}{k!}\sum_\sigma(-1)^{d_k}\sgn(\sigma)\chi_{Top}(D^{k,\sigma}).\]
It is easy to see that $D^2$ fixed by $(1\; 2)$ is also a sphere of dimension $d_2^{(1\: 2)}=1$ in the real case, we simply have to add the equation $z_1-z_2=0$. Hence,
\[A\beta_2(D^2)=\frac{2+0}{2}=1,\]
in the real and in the complex case (which can be deduced from the Milnor numbers as well).

 Something similar happens for $D^3$ fixed by a transposition $(a\; b)$, it is two points. Since $d_3^{(a\: b\: c)}=-1$, the space fixed by $(a\; b\; c)$ is empty. Hence,
\[A\beta_2(D^3)=\frac{0+(2+2+2)+(0+0)}{3!}=1.\]
This shows that it is a good real deformations and, in general, how to find good real perturbations of a given map germ that is a good candidate after \cref{main}.

\begin{table}[h]
\scalebox{0.8}{
\begin{tabular}{ll|cc|cc|c}
Name                                       & $f(x,y,z)=(x,y,f_1,f_2)$                     & $\mu(D^2)$         & $\mu(D^3)$          & $\Ascr_e$-codim    & $\mu_I$            & Condition                                                           \\ \hline
 &&&&&&\\[-10pt]
 \multirow{2}{*}{I }    & $f_1=yz+xz^3+z^5+az^7$           & \multirow{2}{*}{0} & \multirow{2}{*}{13} & \multirow{2}{*}{5} & \multirow{2}{*}{6} & \multirow{2}{*}{$a\neq b$}                                          \\
                       & $f_2=xz+z^4+bz^6$               &                    &                     &                    &                    &                                                                     \\[3pt]
 \multirow{2}{*}{II }   & $f_1=yz+xz^3+az^6+z^7+bz^8+cz^9$ & \multirow{2}{*}{0} & \multirow{2}{*}{25} & \multirow{2}{*}{7} & \multirow{2}{*}{9} & \multirow{2}{*}{Generic$^\dagger$}                                          \\
                       & $f_2=xz+z^4$                     &                    &                     &                    &                    &                                                                     \\[3pt]
 \multirow{2}{*}{III }  & $f_1=yz+z^5+z^6+az^7$            & \multirow{2}{*}{0} & \multirow{2}{*}{13} & \multirow{2}{*}{5} & \multirow{2}{*}{6} & \multirow{2}{*}{$a\neq1$}                                           \\
                       & $f_2=xz+z^4$                    &                    &                     &                    &                    &                                                                     \\[3pt]
 \multirow{2}{*}{IV }   & $f_1=yz+z^5+az^7$                & \multirow{2}{*}{0} & \multirow{2}{*}{13} & \multirow{2}{*}{5} & \multirow{2}{*}{6} & \multirow{2}{*}{$a\neq1$}                                           \\
                       & $f_2=xz+z^4+z^6$                &                    &                     &                    &                    &                                                                     \\[3pt]
 \multirow{2}{*}{V }    & $f_1=xz+z^5+ay^3z^2+y^4z^2$      & \multirow{2}{*}{1} & \multirow{2}{*}{13} & \multirow{2}{*}{5} & \multirow{2}{*}{6} & \multirow{2}{*}{$\forall a$}                                        \\
                       & $f_2=z^3+y^2z$                  &                    &                     &                    &                    &                                                                     \\[3pt]
 \multirow{2}{*}{VI }   & $f_1=xz+z^3$                     & \multirow{2}{*}{3} & \multirow{2}{*}{13} & \multirow{2}{*}{4} & \multirow{2}{*}{6} & \multirow{2}{*}{$a\neq1$}                                           \\
                       & $f_2=yz^2+z^5+z^6+az^7$         &                    &                     &                    &                    &                                                                     \\[3pt]
 \multirow{2}{*}{VII }  & $f_1=xz+z^3$                     & \multirow{2}{*}{4} & \multirow{2}{*}{7}  & \multirow{2}{*}{5} & \multirow{2}{*}{6} & \multirow{2}{*}{$a\neq \pm1,0,\frac{5}{4},\frac{1}{2},\frac{3}{2}$} \\
                       & $f_2=y^2z+xz^2+az^4+z^5$        &                    &                     &                    &                    &                                                                     \\[3pt]
 \multirow{2}{*}{VIII } & $f_1=xz+z^4+az^5+bz^7$           & \multirow{2}{*}{3} & \multirow{2}{*}{13} & \multirow{2}{*}{6} & \multirow{2}{*}{8} & \multirow{2}{*}{$a-a^2\neq b$}                                      \\
                       & $f_2=yz^2+z^4+z^5$              &                    &                     &                    &                    &                                                                    
\end{tabular}
}
\caption{Non-simple corank one singularities, $(\CC^3,0)\to(\CC^4,0)$ and of $\Ascr_e\text{-}\codim-\texttt{\#}parameters \leq 4$. None have quadruple points. $^\dagger$ See \cite[Appendix]{Houston1999a}.}
\label{tab:nonsimplec3c4}
\end{table}

From \cref{main}, it is easy to see that none of the non-simple singularities in \cref{tab:nonsimplec3c4} have a good real perturbation. 

\begin{remark}
Observe that \cref{cor:aftermain2} gives another necessary condition to have a good real perturbation. For example, a map germ $f:(\CC^4,0)\to(\CC^5,0)$ that has singular $D^4$ cannot have a good real perturbation, even when $d_4=1$ and $\mu(D^4)=1$.
\end{remark}

\bibliographystyle{myalpha.bst}
\bibliography{FullBib.bib}
\end{document}